\newcommand{\pFq}[5]{{}_{#1}F_{#2} \left( {#3 \atop #4 }; #5 \right) }
\newcommand{\dP}[3]{\left.\frac{\mathrm{d}^{#1}{#3}}{\mathrm{d}{#2}^{#1}}\right|_{{#2}=-1}}
\newcommand{\dPa}[3]{\left.\frac{\mathrm{d}^{#1}{#3}}{\mathrm{d}{#2}^{#1}}\right|_{{#2}=-a}}
\newcommand{\dL}[2]{\left.\frac{\mathrm{d}^{#1}{L_{#2}^{(\alpha)}}}{\mathrm{d} x^{#1}}\right|_{x=0} }
\numberwithin{equation}{section}
\newtheorem{theorem}{Theorem}[section]
\newtheorem{proposition}[theorem]{Proposition}
\newtheorem{lemma}[theorem]{Lemma}
\newtheorem{corollary}[theorem]{Corollary}
\newtheorem{Remark}[theorem]{Remark}
\newenvironment{remark}{\begin{Remark}\rm}{\end{Remark}}
\newcommand{\ds}{\displaystyle}
\newcommand{\md}{\mathrm{d}}
\newcommand{\dt}{\mathrm{d}t}
\newcommand{\ixx}{\int_{-1}^{x+1}}
\begin{document}

\title{Volterra-Type Convolution of Classical Polynomials}

\author[Ana Loureiro]{Ana F.  Loureiro}
\address{{\normalfont Ana F Loureiro}: School of Mathematics, Statistics and Actuarial Sciences, University of Kent, Sibson Building, CT2 7FS,  U.K.}
\email{A.Loureiro@kent.ac.uk}
\thanks{}

\author{Kuan Xu}
\address{{\normalfont Kuan Xu}: School of Mathematics, Statistics and Actuarial Sciences, University of Kent, Sibson Building, CT2 7FS,  U.K.}
\email{K.Xu@kent.ac.uk}
\keywords{convolution, Volterra convolution integral, orthogonal polynomials, 
Jacobi polynomials, Gegenbauer polynomials, Legendre polynomials, Chebyshev 
polynomials, Laguerre polynomials}

\subjclass[2010]{42A85, 44A35, 33C05, 33C20, 33C45, 
42A10, 41A10, 41A58 
}



\begin{abstract}
We present a general framework for calculating the Volterra-type convolution of 
polynomials from an arbitrary polynomial sequence $\{P_k(x)\}_{k \geqslant 
0}$ with $\deg P_k(x) = k$. Based on this framework, series representations 
for the convolutions of classical orthogonal polynomials, including Jacobi and 
Laguerre families, are derived, along with some relevant results pertaining to 
these new formulas. 
\end{abstract}

\maketitle

\section{Introduction}
Convolution, as a fundamental operation, is commonly seen in many fields of 
sciences and engineering. Let $f:[a,b]\mapsto \mathbb{C}$ and $g:[c,d]\mapsto 
\mathbb{C}$ be two continuous integrable functions defined on intervals with a 
same length, that is $b-a = d-c$, where $a$ and $c$ are finite numbers while 
$b$ and $d$ can be finite or infinite. Their convolution $h(x)$ is a third 
function, given by
\begin{equation}
h(x) = (f\ast g)(x) = \int_c^{x-a} f(x-t) g(t)\dt,~~~~~~~~~~~~x \in [a+c, a+d], 
\label{conv0}
\end{equation}
where the domain of $h(x)$, i.e. $[a+c,a+d]$, has the same length as those of 
$f(x)$ and $g(x)$. This operation is often denoted by an asterisk, as in 
\eqref{conv0}.

When $b$ and $d$ are finite, $f(x)$ and $g(x)$ are compactly supported and can 
be mapped to the interval $[-1, 1]$ via changes of variables and the convolution 
of the mapped versions of $f(x)$ and $g(x)$ differs from that of the original 
$f(x)$ and $g(x)$ by an affine transform only. Therefore, with slight abuse of 
our notation, we consider exclusively the convolution of two functions $f(x)$ 
and $g(x)$ that are defined on $[-1, 1]$, that is,
\begin{equation}
h(x) = (f\ast g)(x) = \int_{-1}^{x+1} f(x-t) g(t)\dt,~~~~~~~~~~~~x \in [-2, 0], 
\label{conv1}
\end{equation}
where the convolution $h(x)$ is, in this case, a function on $[-2, 0]$. 
Analogously, when $b$ and $d$ are infinities \eqref{conv0} becomes
\begin{equation}
h(x) = (f\ast g)(x) = \int_{0}^{x} f(x-t) g(t)\dt,~~~~~~~~~~~~x \in [0, \infty], 
\label{conv2}
\end{equation}
up to a real M\"{o}bius transform. Note that the domains of the transformed 
$f(x)$, $g(x)$, and the convolution $h(x)$ all become $[0, \infty]$ in
\eqref{conv2}.

A powerful working paradigm that motivates this investigation and are commonly 
adopted in problems where functions considered are smooth is to replace $f(x)$ 
and $g(x)$ by their unique series representation in terms of classical 
orthogonal polynomials, e.g. Chebyshev or (weighted) Laguerre series for $f(x)$ 
and $g(x)$ in \eqref{conv1} and \eqref{conv2} respectively. In numerical 
computation, such series are usually truncated at certain degrees so that the 
finite series accurate to machine precision can serve as good approximants. In 
either case, the calculation of convolution integrals boils down to the 
convolution of polynomial series of finite or infinite degrees, or, further, to 
the convolution of classical orthogonal polynomials.

Suppose that $f_M(x) = \sum_{m=0}^M a_m P_m(x)$ and $g_N(x) = \sum_{n=0}^N b_n 
P_n(x)$ are two finite series of classical orthogonal polynomials. The 
convolution of $f_M(x)$ and $g_N(x)$ results in a degree $M+N+1$ polynomial 
$h_{M+N+1}(x) = \sum_{k=0}^{M+N+1}c_k P_k(x)$. In a recent work \cite{XL}, 
it is shown that the convolution operator 
\begin{equation*}
V[f_M](g_N) = h_{M+N+1}(x) = \ixx f_M(x-t) g_N(t)\dt,~~~~~x \in [-2, 0],
\end{equation*}
which is defined by $f_M(x)$ and applied to $g_N(x)$ can be represented as a 
$(M+N+2) \times (N+1)$ matrix $R$ so that the coefficients vector  
$\underline{c}=(c_1, c_2, \ldots, c_{M+N+1})^T$ equals the product of $R$ and 
the column vector $\underline{b}=(b_1, b_2, \ldots, b_N)^T$. That is,
\begin{equation}\label{cRb}
\underline{c} = R\underline{b}.
\end{equation}
In fact, this convolution matrix $R$ is constructed numerically using a four- 
or five-point recurrence relation satisfied by its entries. However, neither 
coefficients $\{c_k\}_{k=0}^{M+N+1}$ nor the entries of the convolution matrix 
$R$ are known explicitly. 

In this paper, we derive explicit formulas for the convolution of two classical
orthogonal polynomials of Jacobi or Laguerre families. The new results may hint 
us on the rich structure of convolution matrices $R$ and, in turn, shed light 
upon their fast construction as well as the design of fast algorithms for 
convolving polynomial series. On a different note, the results presented in this 
paper largely expand the collection of the existing convolution formulas 
comprised of those of Laguerre polynomials $L^{(0)}_n(x)$ \cite[Eq. 
(18.17.2)]{DLMF} and Bessel functions of the first kind $J_n(x)$ \cite[Eq. 
(10.22.31)]{DLMF}.

In the next section, we present a framework for calculating the convolution of 
two general polynomials, along with some relevant results. Based on these, 
convolution formulas for polynomials of the Jacobi family are derived in 
Section 3, where we also detail the special cases of Gegenbauer, Legendre, and 
Chebyshev of the second kind. In Section 4, we show the convolution formulas for 
the Laguerre polynomials, which are also direct results of Section 2, before we 
conclude the paper by a brief discussion regarding the convolution of polynomial 
series in Section 5.

\section{Convolution of two elements in a polynomial sequence}

Let $\{P_n(x)\}_{n\geqslant 0}$ be a polynomial sequence with $\deg P_n(x) = 
n$, which forms a basis of the vector space of polynomials with complex 
coefficients. We consider the problem of finding explicit expressions for the 
$P_n(x)$-series coefficients $\rho_{j,n}^m$ so that
\begin{equation}\label{gen conv}
\int_{-a}^{x+a} P_m(x-t)  P_n(t) \mathrm{d}t = \sum_{j=0}^{m+n+1} 
\rho_{j,n}^{m} P_{j}(x+a),
\end{equation}
where $a$ is constant. When $a = 1$ and $a=0$, \eqref{gen conv} corresponds to 
the standard cases \eqref{conv1} and \eqref{conv2}, respectively. A change of 
variable $t\to x-\tau$ shows the commutativity of the convolution in \eqref{gen 
conv}, which gives a remarkable symmetry property
\begin{equation*}\label{gen conv sym}
\rho_{j,n}^m = \rho_{j,m}^n,
\end{equation*} 
for any $j, n, m \geqslant 0$. Therefore, there is no loss of generality if one 
assumes $n\geqslant m$ or $m \geqslant n$.

A sequence $\{\frac{\md^s}{\md x^s} P_{n+s}(x)\}_{s \geqslant 0}$ for $n 
\geqslant 0$, i.e. the $s$-th derivative of the original sequence, also spans 
the vector space of polynomials. Therefore, the $r$-th derivatives of $P_n(x)$ 
can be represented by a linear combination of the elements of 
$\{\frac{\md^s}{\md x^s} P_{n+s}(x)\}_{s \geqslant 0}$ as
\begin{equation}\label{cc genP}
\frac{\md^r P_n(x)}{\md x^r} = \sum_{k=0}^{n-r} \gamma_{n-r,k}^{(r, s)} 
\frac{\md^s P_{k+s}(x)}{\md x^s}, \quad n \geqslant r,
\end{equation}
where the coefficients $\gamma_{n-r,k}^{(r, s)}$ are referred to as the {\it 
connection coefficients} between $\{\frac{\md^r}{\md x^r} P_n(x)\}_{n \geqslant 
r}$ and $\{\frac{\md^s}{\md x^s} P_{n+s}(x)\}_{n \geqslant 0}$. 
The unique representation of $P_n(x)$ in terms of the 
monomial sequence $\{(x+a)^n\}_{n\geqslant 0}$ can be obtained by its Taylor 
expansion about $x=-a$
\begin{equation*}\label{Pn to xn}
P_n(x) = \sum_{k=0}^{n} \frac{1}{k!} \dPa{k}{x}{P_n(x)} (x+a)^k, \quad n 
\geqslant 0.
\end{equation*}
and, reversely, a unique set of coefficients $b_{n,k}$ exist such that 
\begin{equation}\label{xn to Pn gen}
(x+a)^n = \sum_{k=0}^{n} b_{n,k} P_k(x), \quad n \geqslant 0. 
\end{equation}
When $\{P_n(x)\}_{n \geqslant 0}$ is an orthogonal sequence, these 
$b$-coefficients can be obtained via the orthogonality measures and their 
moments. 

\begin{lemma}\label{lemma:rho}
The $\gamma$-connection coefficients in \eqref{cc genP} can be expressed in 
terms of the connection $b$-coefficients in \eqref{xn to Pn gen} by 
\begin{equation}\label{gamma with b}
\gamma_{n-r,k}^{(r,s)} = \sum_{\sigma=0}^{n-(r+k)} 
\frac{b_{\sigma+k+s,k+s}}{(\sigma+k+s)!} \dPa{r+k+\sigma}{x}{P_n(x)}.
\end{equation}
\end{lemma}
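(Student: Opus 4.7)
The plan is to derive \eqref{gamma with b} by combining the Taylor expansion of $P_n(x)$ about $x=-a$, the inverse expansion \eqref{xn to Pn gen}, and termwise differentiation; the whole argument reduces to a reindexing and an exchange of summation.

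\textbf{Step 1 (Put $\frac{\md^r P_n}{\md x^r}$ in the $(x+a)^\ell$ basis).} First I would start from the Taylor expansion of $P_n(x)$ about $x=-a$ displayed just before \eqref{xn to Pn gen} and differentiate $r$ times to obtain
\begin{equation*}
\frac{\md^r P_n(x)}{\md x^r} \;=\; \sum_{\ell=0}^{n-r} \frac{1}{\ell!}\,\dPa{\ell+r}{x}{P_n(x)}\,(x+a)^{\ell}.
\end{equation*}
This is just the termwise differentiation of a polynomial of degree $n$ and produces no convergence issues.

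\textbf{Step 2 (Put $(x+a)^\ell$ in the $\frac{\md^s P_{k+s}}{\md x^s}$ basis).} Next I would apply the identity \eqref{xn to Pn gen} with $n$ replaced by $\ell+s$ and differentiate both sides $s$ times. The first $s$ terms on the right-hand side vanish because $\deg P_k=k<s$, so after the substitution $k\mapsto k+s$ I would obtain
\begin{equation*}
\frac{(\ell+s)!}{\ell!}\,(x+a)^{\ell} \;=\; \sum_{k=0}^{\ell} b_{\ell+s,\,k+s}\,\frac{\md^s P_{k+s}(x)}{\md x^s},
\end{equation*}
and therefore
\begin{equation*}
(x+a)^{\ell} \;=\; \frac{\ell!}{(\ell+s)!}\sum_{k=0}^{\ell} b_{\ell+s,\,k+s}\,\frac{\md^s P_{k+s}(x)}{\md x^s}.
\end{equation*}
Uniqueness of this expansion is guaranteed because $\{\frac{\md^s P_{k+s}}{\md x^s}\}_{k\geqslant 0}$ is a basis of the polynomial space, as already remarked in the paragraph containing \eqref{cc genP}.

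\textbf{Step 3 (Combine and reindex).} Substituting the result of Step 2 into the expansion from Step 1 and cancelling the $\ell!$ factors gives
\begin{equation*}
\frac{\md^r P_n(x)}{\md x^r} \;=\; \sum_{\ell=0}^{n-r}\sum_{k=0}^{\ell} \frac{b_{\ell+s,\,k+s}}{(\ell+s)!}\,\dPa{\ell+r}{x}{P_n(x)}\;\frac{\md^s P_{k+s}(x)}{\md x^s}.
\end{equation*}
Swapping the order of summation ($k$ now outer, $\ell$ from $k$ to $n-r$) and comparing with \eqref{cc genP}, uniqueness of the $\gamma$-coefficients allows me to read off $\gamma_{n-r,k}^{(r,s)}=\sum_{\ell=k}^{n-r}\frac{b_{\ell+s,k+s}}{(\ell+s)!}\dPa{\ell+r}{x}{P_n(x)}$; the substitution $\sigma=\ell-k$ then recasts this sum exactly as \eqref{gamma with b}.

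\textbf{Main obstacle.} The only non-bookkeeping step is the identity in Step 2, namely producing a clean expansion of $(x+a)^\ell$ in the derived basis $\{P_{k+s}^{(s)}\}_{k\geqslant 0}$. The subtlety is ensuring the index shift after differentiation is handled correctly so that the coefficients end up as $b_{\ell+s,k+s}$ rather than some other offset; after that, the proof is a routine exchange of summations and a variable change.
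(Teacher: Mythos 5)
Your proposal is correct and follows essentially the same route as the paper: both arguments rewrite $(x+a)^{\ell}$ as a constant multiple of $\frac{\md^s}{\md x^s}(x+a)^{\ell+s}$, invoke \eqref{xn to Pn gen} with the index shifted by $s$, and then exchange the two summations and reindex to read off \eqref{gamma with b}. The only difference is the order in which the differentiation and the substitution of \eqref{xn to Pn gen} are performed, which is immaterial.
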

\begin{proof} The Taylor expansion of $\frac{\md^r}{\md x^r} P_n(x)$ gives 
\[
\frac{\md^r P_n(x)}{\md x^r} = \sum_{\sigma=0}^{n-r}\frac{1}{\sigma!} 
\dPa{r+\sigma}{x}{P_n(x)}(x+a)^\sigma = 
\sum_{\sigma=0}^{n-r}\frac{1}{(\sigma+s)!} 
\dPa{r+\sigma}{x}{P_n(x)} \left(\frac{\md^s}{\md x^s}(x+a)^{\sigma+s}\right) 
\]
where $s$ is an arbitrary positive integer. Substituting \eqref{xn to Pn gen} 
into the last equation gives
\[
\frac{\md^r P_n(x)}{\md x^r} = \sum_{\sigma=0}^{n-r}\frac{1}{(\sigma+s)!} 
\dPa{r+\sigma}{x}{P_n(x)} \left( \sum_{k=0}^{\sigma} b_{\sigma+s,k+s} 
\frac{\md^s}{\md x^s}P_{k+s}(x) \right)
\]
which, after exchanging the summations, becomes 
\[
\frac{\md^r P_n(x)}{\md x^r} = \sum_{k=0}^{n-r} \left( \sum_{\sigma=0}^{n-(r+k)}
\frac{b_{\sigma+k+s, k+s}}{(\sigma+k+s)!} \dPa{r+k+\sigma}{x}{P_n(x)}   
\right)\frac{\md^s}{\md x^s}P_{k+s}(x). 
\]
Matching the like terms in \eqref{cc genP} leads to \eqref{gamma with b}. 
\end{proof}

We omit the proof of the following lemma, which is concerned with the $p$-th 
derivative of the convolution in \eqref{gen conv} and can be easily shown by 
repeatedly applying the Leibniz rule for differentiation under the integral 
sign.
\begin{lemma}\label{lemma:pth}
For a positive integer $p$, 
\begin{equation} \label{pth of int P}
\frac{\mathrm{d}^p}{\mathrm{d}x^p} \int_{-a}^{x+a} P_m(x-t) P_n(t) \mathrm{d}t
= \int_{-a}^{x+a} \frac{\mathrm{d}^p}{\mathrm{d}x^p} P_m(x-t) P_n(t) \mathrm{d}t
+ \sum_{k=1}^p \dPa{p-k}{x}{P_m(x)} \frac{\mathrm{d}^{k-1}}{\mathrm{d}x^{k-1}} 
P_n(x+a).
\end{equation}
\end{lemma}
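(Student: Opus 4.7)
The plan is to prove the identity by induction on $p$, using the Leibniz integral rule as the fundamental tool at each step. The base case $p=1$ follows directly from Leibniz's rule for differentiating an integral with a variable upper limit: the integrand produces the usual partial-derivative term $\int_{-a}^{x+a} P_m'(x-t)P_n(t)\,\dd t$, while differentiating the upper limit $x+a$ gives a boundary contribution equal to $P_m(x-t)P_n(t)$ evaluated at $t=x+a$. Since $P_m(x-(x+a))=P_m(-a)=\dPa{0}{x}{P_m(x)}$, this boundary term is exactly the single $k=1$ summand on the right-hand side of \eqref{pth of int P}.

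For the inductive step, I would assume \eqref{pth of int P} holds at level $p$ and differentiate both sides with respect to $x$. Differentiating the integral term once more by Leibniz's rule yields the integral with $\frac{\md^{p+1}}{\md x^{p+1}}P_m(x-t)$ in the integrand together with a fresh boundary term $\dPa{p}{x}{P_m(x)}\,P_n(x+a)$, using that $\partial_x^p P_m(x-t) = P_m^{(p)}(x-t)$ evaluated at $t=x+a$ gives $P_m^{(p)}(-a)$. Differentiating the finite sum is straightforward: the coefficients $\dPa{p-k}{x}{P_m(x)}$ are constants in $x$, so only the second factor is differentiated, producing $\sum_{k=1}^{p}\dPa{p-k}{x}{P_m(x)}\,\frac{\md^{k}}{\md x^{k}}P_n(x+a)$.

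The final step is a reindexing: shifting $k\mapsto k-1$ in this last sum makes it run over $k=2,\ldots,p+1$ with summand $\dPa{(p+1)-k}{x}{P_m(x)}\,\frac{\md^{k-1}}{\md x^{k-1}}P_n(x+a)$, and the extra boundary term $\dPa{p}{x}{P_m(x)}\,P_n(x+a)$ supplies precisely the missing $k=1$ contribution, completing the sum from $k=1$ to $k=p+1$. This matches \eqref{pth of int P} with $p$ replaced by $p+1$.

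The argument is essentially bookkeeping, so there is no genuine obstacle; the only delicate point is aligning the index shift so that the boundary term generated at each inductive step is correctly absorbed as the leading ($k=1$) summand at the next level. This is why the authors chose to omit the proof.
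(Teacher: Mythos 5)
Your induction on $p$ via the Leibniz integral rule is correct, and the bookkeeping (the boundary term at each step becoming the $k=1$ summand after the index shift) checks out. This is precisely the argument the paper has in mind — it omits the proof but states that the identity follows "by repeatedly applying the Leibniz rule for differentiation under the integral sign," which is exactly your inductive step.
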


With Lemmas \ref{lemma:rho} and \ref{lemma:pth}, we show in the following 
theorem that $\rho_{j,n}^{m}$ in \eqref{gen conv} can be represented in terms 
of the $\gamma$-coefficients in \eqref{cc genP} and the $b$-coefficients in 
\eqref{xn to Pn gen}. 

\begin{theorem}\label{thm: rho k geq m} 
For $0\leqslant j\leqslant m+n+1$, the coefficients $\rho_{j,n}^m$ in \eqref{gen 
conv} can be expressed as
\begin{equation}\label{rho jn v1}
\rho_{j,n}^m = \sum_{p=j}^{m+n+1} \frac{b_{p,j}}{p!} \sum_{\nu=1}^p\left( 
\dPa{p-\nu}{x}{P_m(x)} \dPa{\nu-1}{x}{P_n(x)} \right),
\end{equation}
or, equivalently,
\begin{subequations}\label{rho jn v23}
\begin{equation}\label{rho jn v2}
\rho_{j,n}^m = \sum_{\nu=\max(1,j-n)}^{m+1}\left( 
\gamma_{n-j+\nu,0}^{(j-\nu,j)} 
\dPa{\nu-1}{x}{P_m(x)} \right) ~~~~~\text{ for } 
j\geqslant m+1
\end{equation}
and
\begin{equation}\label{rho jn v3}
\begin{multlined}
\rho_{j,n}^m = \sum\limits_{\nu=1}^{j}\left( \gamma_{m-j+\nu,0}^{(j-\nu,j)} 
\dPa{\nu-1}{x}{P_n(x)}\right)\\
\hspace{2cm}+ \sum\limits_{\nu=j+1}^{n+1}\left(\dPa{\nu-1}{x}{P_n(x)} \sum 
\limits_{p=0}^{m} \frac{b_{p+\nu,j}}{(p+\nu)!}\dPa{p}{x}{P_m(x)} \right)
~~~~~\text{ for } 0 \leqslant j\leqslant m, 
\end{multlined}
\end{equation}
\end{subequations}
where the $\gamma$- and the $b$-coefficients are the connection coefficients 
given in \eqref{cc genP} and \eqref{xn to Pn gen}, respectively. 
\end{theorem}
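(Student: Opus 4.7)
The plan is to Taylor-expand $h(x):=\int_{-a}^{x+a}P_m(x-t)P_n(t)\,\md t$ about $x=-2a$, where the integration range collapses and $h$ vanishes, and then re-expand the resulting polynomial in the basis $\{P_j(x+a)\}_{j\geqslant 0}$ via \eqref{xn to Pn gen}. The identity \eqref{rho jn v1} will come out directly from this Taylor-then-reexpand argument, while both \eqref{rho jn v2} and \eqref{rho jn v3} will follow from \eqref{rho jn v1} by swapping the order of the double sum and invoking Lemma \ref{lemma:rho}.

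For \eqref{rho jn v1}, I would first apply Lemma \ref{lemma:pth} and set $x=-2a$. The integral term in \eqref{pth of int P} vanishes because the range degenerates to $\int_{-a}^{-a}=0$, and since $\frac{\md^{k-1}}{\md x^{k-1}}P_n(x+a)\big|_{x=-2a}=P_n^{(k-1)}(-a)=\dPa{k-1}{x}{P_n(x)}$, the boundary sum collapses to
\[
h^{(p)}(-2a)=\sum_{k=1}^{p}\dPa{p-k}{x}{P_m(x)}\,\dPa{k-1}{x}{P_n(x)}.
\]
Since $h$ has degree $m+n+1$, its Taylor expansion at $-2a$ terminates. Applying \eqref{xn to Pn gen} with the substitution $y=x+a$ gives $(x+2a)^p=\sum_{k=0}^{p}b_{p,k}P_k(x+a)$, and a swap of summations yields
\[
h(x)=\sum_{p=0}^{m+n+1}\frac{h^{(p)}(-2a)}{p!}(x+2a)^p=\sum_{j=0}^{m+n+1}\Biggl(\sum_{p=j}^{m+n+1}\frac{b_{p,j}}{p!}\,h^{(p)}(-2a)\Biggr)P_j(x+a).
\]
Matching with \eqref{gen conv} and inserting the expression above for $h^{(p)}(-2a)$ proves \eqref{rho jn v1}.

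To derive \eqref{rho jn v3} (the case $0\leqslant j\leqslant m$), I would rearrange \eqref{rho jn v1} by pulling $\dPa{\nu-1}{x}{P_n(x)}$ outside, leaving the inner sum $\sum_{p\geqslant\max(j,\nu)}\frac{b_{p,j}}{p!}\dPa{p-\nu}{x}{P_m(x)}$. For $\nu\leqslant j$ this inner sum begins at $p=j$, and Lemma \ref{lemma:rho} applied with $r=j-\nu$, $s=j$, $k=0$ identifies it exactly with $\gamma_{m-j+\nu,0}^{(j-\nu,j)}$. For $j+1\leqslant\nu\leqslant n+1$ the inner sum begins at $p=\nu$ instead, and the substitution $p=q+\nu$ produces the explicit form of the second sum in \eqref{rho jn v3}. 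For \eqref{rho jn v2} (the case $j\geqslant m+1$), I would instead apply the preliminary relabeling $\nu\mapsto p-\nu+1$ inside \eqref{rho jn v1} and then pull $\dPa{\nu-1}{x}{P_m(x)}$ outside. The outer range is now forced by $\nu\leqslant m+1\leqslant j$, so only the $\gamma$-case arises; the nonemptiness condition $n-j+\nu\geqslant 0$ for the $\gamma$-sum supplies the lower bound $\max(1,j-n)$.

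The main obstacle is the bookkeeping in this last step: carefully tracking the truncation of the inner sums (when the derivatives of $P_m$ or $P_n$ exceed the respective degrees) and the vanishing of $\gamma$-coefficients with empty defining sums, so that the index ranges reproduce precisely those appearing in \eqref{rho jn v2} and \eqref{rho jn v3}.
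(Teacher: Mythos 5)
Your proposal is correct. For \eqref{rho jn v1} and \eqref{rho jn v3} it coincides with the paper's argument: Taylor expansion of the convolution about $x=-2a$, where Lemma \ref{lemma:pth} reduces the Taylor coefficients to the boundary sum, followed by re-expansion of $(x+2a)^p$ via \eqref{xn to Pn gen}; and then, for $0\leqslant j\leqslant m$, a swap of the double sum in \eqref{rho jn v1} with the $\nu\leqslant j$ block identified as $\gamma_{m-j+\nu,0}^{(j-\nu,j)}$ through Lemma \ref{lemma:rho}. Where you genuinely diverge is \eqref{rho jn v2}: the paper does \emph{not} obtain it from \eqref{rho jn v1}, but instead differentiates \eqref{gen conv} $j$ times (so that the integral term in \eqref{pth of int P} dies because $j\geqslant m+1$), re-expands $\frac{\md^{\ell}}{\md x^{\ell}}P_n(x+a)$ via \eqref{cc genP}, and matches coefficients of $\frac{\md^j}{\md x^j}P_{k+j}(x+a)$, which yields the more general identity \eqref{interm3} for all $\rho_{k+j,n}^m$ before specializing to $k=0$. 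Your route --- relabel $\nu\mapsto p-\nu+1$ in the inner sum of \eqref{rho jn v1}, swap sums, note that $\nu\leqslant m+1\leqslant j$ forces the inner sum to start at $p=j$ and to truncate at $p=n+\nu$, and recognize it as $\gamma_{n-j+\nu,0}^{(j-\nu,j)}$ via \eqref{gamma with b} --- is a clean, purely algebraic derivation that treats \eqref{rho jn v2} and \eqref{rho jn v3} uniformly and sidesteps the (implicit) appeal to linear independence of $\{\frac{\md^j}{\md x^j}P_{k+j}(x+a)\}_{k}$ needed for the paper's term-matching; what it gives up is the by-product \eqref{interm3}, which expresses all the higher coefficients $\rho_{k+j,n}^m$, not just the $k=0$ case. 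The lower limit $\max(1,j-n)$ emerges in your version from the emptiness of the $\sigma$-sum defining $\gamma_{n-j+\nu,0}^{(j-\nu,j)}$ when $\nu<j-n$, exactly as you indicate, so the bookkeeping concern you flag is manageable.
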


\begin{proof} 
To show \eqref{rho jn v1}, we Taylor expand the convolution integral in 
\eqref{gen conv} about $x=-2a$ 
\begin{equation}\label{Taylor of gen int}
\int_{-a}^{x+a} P_m (x-t) P_n(t) \mathrm{d}t = \sum_{p=0}^{m+n+1} 
\frac{1}{p!}\left[ \frac{\md ^p}{\md (x+a)^p}\int_{-a}^{x+a} P_m(x-t) P_n(t) 
\mathrm{d}t \right]_{x=-2a} (x+2a)^p.
\end{equation}
Note that the Taylor coefficients in \eqref{Taylor of gen int} can be obtained 
using \eqref{pth of int P}:
\[
\left[ \frac{\md ^p}{\md (x+a)^p}\int_{-a}^{x+a} P_m(x-t) P_n(t) \mathrm{d}t 
\right]_{x=-2a} = \sum_{\nu=1}^p \dPa{p-\nu}{x}{P_m(x)} \dPa{\nu-1}{x}{P_n(x)},
\]
where the sum is assumed zero when $p=0$, and $(x+2a)^p$ can be replaced by its 
expansion in \linebreak {$\{P_n(x+a)\}_{n\geqslant 0}$} as given in \eqref{xn to Pn gen}:
\[
(x+2a)^p = \sum_{j=0}^p b_{p,j} P_j(x+a).
\]
We substitute the last two equations into \eqref{Taylor of gen int} and exchange 
the order of the summations to have
\begin{equation}\label{int expansion any}
\int_{-a}^{x+a} P_m (x-t) P_n(t) \mathrm{d}t =\sum_{j=0}^{m+n+1} \left( 
\sum_{p=j}^{m+n+1} \frac{b_{p,j}}{p!} \sum_{\nu=1}^p \dPa{p-\nu}{x}{P_m(x)} \ 
\hspace{-3mm}\dPa{\nu-1}{x}{P_n(x)} \right) P_j(x+a).
\end{equation}
Matching terms in \eqref{int expansion any} and \eqref{gen conv} gives 
\eqref{rho jn v1}. 

To see \eqref{rho jn v2}, we take the $j$-th derivative on both sides of 
\eqref{gen conv} for $j \geqslant m+1$ to have 
\begin{equation*}\label{proof pth of int}
\frac{\mathrm{d}^j}{\mathrm{d}x^j} \int_{-a}^{x+a} P_m(x-t) P_n(t) \mathrm{d}t
=\sum_{k=0}^{m+n+1-j} \rho_{k+j,n}^m \frac{\mathrm{d}^j}{\mathrm{d}x^j} 
P_{k+j}(x+a).
\end{equation*}
Meanwhile, Lemma \ref{lemma:pth} gives
\begin{equation*}\label{interm0}
\frac{\mathrm{d}^j}{\mathrm{d}x^j} \int_{-a}^{x+a} P_m(x-t) P_n(t) \mathrm{d}t 
= \sum_{\ell=1}^j \dPa{j-\ell}{x}{P_m(x)} 
\frac{\mathrm{d}^{\ell-1}}{\mathrm{d}x^{\ell-1}} P_n(x+a),
\end{equation*}
where the convolution integral on the right-hand side of \eqref{pth of int 
P} vanishes here, since the integrand becomes zero for $j \geqslant m+1$. 
Combining the last two equations, we have
\begin{equation}\label{interm1}
\sum_{k=0}^{m+n+1-j} \rho_{k+j,n}^m \frac{\mathrm{d}^j}{\mathrm{d}x^j} 
P_{k+j}(x+a) = \sum_{\ell=1}^j \dPa{j-\ell}{x}{P_m(x)} 
\frac{\mathrm{d}^{\ell-1}}{\mathrm{d}x^{\ell-1}} P_n(x+a).
\end{equation}
If we denote by $S$ the sum on the right-hand side of \eqref{interm1},
\begin{equation*}
S = \sum_{\ell=0}^{j-1} \dPa{j-\ell-1}{x}{P_m(x)} \frac{\mathrm{d}^{\ell}} 
{\mathrm{d} x^{\ell}} P_n(x+a) = \sum_{\ell=j-(m+1)}^{n}
\dPa{j-\ell-1}{x}{P_m(x)} \frac{\mathrm{d}^{\ell}}{\mathrm{d} x^{\ell}}P_n(x+a),
\end{equation*}
where the last equality is obtained by noting that the summand disappear when 
$j-\ell-1 > m$ and $\ell > n$. Now we use the connection formula \eqref{cc genP} 
once again to have
\begin{equation}\label{interm2}
\begin{multlined}
S = \sum_{\ell=j-(m+1)}^n \dPa{j-\ell-1}{x}{P_m(x)} \sum_{k=0}^{n-\ell} 
{\gamma}_{n-\ell,k}^{(\ell, j)} \frac{\mathrm{d}^j}{\mathrm{d}x^j}P_{k+j}(x+a)\\ 
\hspace{3cm}= \sum_{k=0}^{m+n+1-j} \left( {\gamma}_{n-\ell,k}^{(\ell, 
j)} \sum_{\ell=j-(m+1)}^{n-k} \dPa{j-\ell-1}{x}{P_m(x)} \right)
\frac{\mathrm{d}^j}{\mathrm{d}x^j}P_{k+j}(x+a),
\end{multlined}
\end{equation}
where we have swapped the order of the sums. Combining \eqref{interm1} and
\eqref{interm2} and matching terms yield
\begin{equation}\label{interm3}
\rho_{k+j,n}^m = \sum_{\ell=j-(m+1)}^{n-k} {\gamma}_{n-\ell,k}^{(\ell, j)} 
\dPa{j-\ell-1}{x}{P_m(x)} 
\end{equation}
for $0 \leqslant k \leqslant m+n+1-j$. Particularly, for $k=0$, \eqref{interm3} 
becomes
\begin{equation*}\label{rho big k f1} 
\rho_{j,n}^m = \sum_{\ell=j-(m+1)}^n {\gamma}_{n-\ell,0}^{(\ell, j)}
\dPa{j-\ell-1}{x}{P_m(x)} = \sum_{\nu=j-n}^{m+1} \gamma_{n-j+\nu,0}^{(j-\nu,j)} 
\dPa{\nu-1}{x}{P_m(x)},
\end{equation*}
where we use the change of variable $\ell = j-\nu$ in the last step. Ensuring 
$\nu-1 \geqslant 0$, we obtain \eqref{rho jn v2}.

To see \eqref{rho jn v3}, we swap the order of sums in \eqref{rho jn v1} to 
have
\[
\rho_{j,n}^m = \left( \sum_{\nu=1}^{j} \sum_{p=j}^{m+\nu} + 
\sum_{\nu=j+1}^{n+1} 
\sum_{p=\nu}^{m+\nu} \right) \left(\frac{b_{p,j}}{p!} \dPa{p-\nu}{x}{P_m(x)}  
\dPa{\nu-1}{x}{P_n(x)}\right),
\]
since $\dPa{p-\nu}{x}{P_m(x)}$ and $\dPa{\nu-1}{x}{P_n(x)}$ vanish for $p > 
m+\nu$ and $\nu>n+1$, respectively. This is equivalent to 
\begin{equation}\label{rho jn v3 intermediate}
\begin{multlined}
\rho_{j,n}^m = \sum_{\nu=1}^{j} \dPa{\nu-1}{x}{P_n(x)} \sum_{p=0}^{m+\nu-j} 
\left(\frac{b_{p+j,j}}{(p+j)!} \dPa{p-\nu+j}{x}{P_m(x)}\right) \\ 
\hspace{4cm}+ \sum_{\nu=j+1}^{n+1} \dPa{\nu-1}{x}{P_n(x)} \sum_{p=0}^{m} 
\left( \frac{b_{p+\nu,j}}{(p+\nu)!} \dPa{p}{x}{P_m(x)}\right).
\end{multlined}
\end{equation}

In \eqref{gamma with b}, we set $k=0$ and $s=j$ and replacing $r$ and $n$ by 
$j-\nu$ and $m$ respectively to have
\[
\gamma_{m+\nu-j,0}^{(j-\nu,j)} = \sum_{\sigma=0}^{m+\nu-j} 
\frac{b_{\sigma+j,j}}{(\sigma+j)!} \dPa{j-\nu+\sigma}{x}{P_m(x)},
\]
by applying which the first double sum in \eqref{rho jn v3 intermediate} can be 
simplified as
\[
\sum_{\nu=1}^{j}\dPa{\nu-1}{x}{P_n(x)} \sum_{p=0}^{m+\nu-j} 
\frac{b_{p+j,j}}{(p+j)!} \dPa{p-\nu+j}{x}{P_m(x)} = \sum_{\nu=1}^{j} 
\gamma_{m-j+\nu,0}^{(j-\nu,j)} \dPa{\nu-1}{x}{P_n(x)}. 
\]
Hence, \eqref{rho jn v3} is obtained. 
\end{proof}

To obtain the preceding results, we have nowhere assumed the sequence of 
polynomials $\{P_n(x)\}_{n\geqslant 0}$ to be orthogonal and the expressions 
for the $\gamma$-connection coefficients and the $b$-coefficients are, in 
general, not easy to calculate. However, when $\{P_n(x)\}_{n \geqslant 0}$ is an 
orthogonal polynomial sequence, these coefficients are usually explicitly known 
or more likely to be obtainable. In fact, for a non-decreasing, non-negative 
function $w(x)$ in $[a,b]$ which is measurable in the Lebesgue sense, that is, 
all the moments $\int_{a}^b x^n w(x) \mathrm{d}x$ exist and are 
finite\footnote{In the case of $a=-\infty$ or $b=+\infty$, we require that 
$\lim\limits_{x\to -\infty}x^n w(x)$ and $\lim\limits_{x\to +\infty}x^n w(x)$ to 
be finite, respectively, for any positive integer $n$.}, there is an 
orthogonal sequence of polynomials $\{P_n(x)\}_{n\geqslant 0}$ for which 
\begin{equation*}\label{ip ortho}
\langle P_m(x),P_n(x)\rangle_{w} = \int_{a}^b P_m(x)P_n(x) w(x)\mathrm{d}x = 
h_m\delta_{m,n}, \quad m,n = 0,1,2,\ldots
\end{equation*}
where $h_m=\langle P_m(x),P_m(x)\rangle_{w} \neq 0$ for all positive integers 
$m$ and $\delta_{m,n}$ denotes the {\it Kronecker delta symbol}. Since 
$P_n(x)\in L_{w}^2(a,b)$ and $\{P_n(x)\}_{n\geqslant 0}$ spans the vector space 
of polynomials, any polynomial $p(x)$ of degree $m$ can be written as 
\[
p(x) = \sum_{k=0}^m c_{k} P_k(x) \quad \text{with}\quad c_{k} = \frac{\langle 
p(x),P_n(x)\rangle_{w} }{h_k}. 
\]

Particularly, for classical orthogonal polynomial sequences, i.e. Jacobi, 
Laguerre, Hermite, and Bessel polynomials, these $b$- and $\gamma$- connection 
coefficients are well studied \cite{Ismail, RochaMar, Rainville, SRM}. Based on 
these known results, we shall explicitly calculate the $\rho$-coefficients in 
\eqref{gen conv} for the Jacobi and the Laguerre polynomials in the next two 
sections. 

We close this section with the following theorem which shows that a consecutive 
part of the $\rho$-coefficients in \eqref{gen conv} could be exactly zero when 
$\{P_n(x)\}_{n\geqslant 0}$ is a classical orthogonal polynomial sequence. 
However, these zeros are not immediately obvious from Theorem \ref{thm: rho k 
geq m}.

\begin{theorem}\label{thm: classical gen} Let $m,n$ be two nonnegative
integers such that $n\geqslant m$. Suppose $\{P_n(x)\}_{n\geqslant 0}$ is a 
classical polynomial sequence and the interval $(-a,x+a)$ lies within the 
support of the orthogonality measure of $\{P_n(x)\}_{n\geqslant 0}$. When $n 
\geqslant 2m+q+2$, the series coefficients $\rho_{j,n}^m = 0$ for $m+1 \leqslant 
j \leqslant n-m-q-1$, where $q=1,1,0$ and $-1$ for Jacobi, Bessel, Laguerre and 
Hermite polynomials, respectively.
\end{theorem}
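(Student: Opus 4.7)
The plan is to invoke formula~\eqref{rho jn v2}. In the range $m+1 \leqslant j \leqslant n-m-q-1 \leqslant n$ under consideration, the lower summation limit $\max(1,j-n)$ equals $1$, so
\[
\rho_{j,n}^m = \sum_{\nu=1}^{m+1}\gamma_{n-j+\nu,0}^{(j-\nu,j)} \dPa{\nu-1}{x}{P_m(x)},
\]
and it suffices to prove that every coefficient $\gamma_{n-j+\nu,0}^{(j-\nu,j)}$, for $\nu=1,\dots,m+1$, is zero.

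For each of the four classical families, both $\frac{\md^{j-\nu}}{\md x^{j-\nu}} P_n$ and each element of $\bigl\{\frac{\md^{j}}{\md x^{j}} P_{k+j}\bigr\}_{k\geqslant 0}$ coincide, up to nonzero multiplicative constants, with classical polynomials in shifted-parameter families orthogonal with weights $\tilde w$ and $\tilde w' = \sigma^\nu \tilde w$ respectively, where $\sigma$ is the Pearson polynomial of the family---$\sigma=1-x^2$ for Jacobi, $\sigma = x^2$ for Bessel, $\sigma = x$ for Laguerre, $\sigma = 1$ for Hermite---so that $\deg\sigma = q+1$. Extracting via orthogonality the $k=0$ coefficient in the expansion of the former in the basis made up of the latter, then invoking Rodrigues' formula $\tilde P_N \tilde w \propto \frac{\md^N}{\md x^N}\bigl[\sigma^N \tilde w\bigr]$ with $N = n-j+\nu$, and integrating by parts $N$ times---the boundary terms vanishing because $\sigma^N\tilde w$ and its derivatives of order up to $N-1$ vanish at the endpoints of the support under the standard regularity hypotheses on the weight exponents---one obtains
\[
\gamma_{n-j+\nu,0}^{(j-\nu,j)} \;\propto\; \int \sigma(x)^N\, \tilde w(x)\, \frac{\md^N}{\md x^N}\bigl[\sigma(x)^\nu\bigr]\, \md x.
\]

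Since $\sigma^\nu$ has degree $\nu(q+1)$, the right-hand side vanishes identically whenever $N > \nu(q+1)$, equivalently $n-j > \nu q$. A direct case check ($q=1$: $n-j\geqslant m+2 > m+1\geqslant \nu$; $q=0$: $n-j\geqslant m+1>0$; $q=-1$: $n-j\geqslant m\geqslant 0 > -\nu$) confirms that the hypothesis $n-j \geqslant m+q+1$ forces $n-j > \nu q$ for every $\nu\in\{1,\dots,m+1\}$; hence each $\gamma$ in the sum vanishes and $\rho_{j,n}^m=0$. The main technical difficulty is the unified treatment of all four classical families: one must verify, family by family, the exact parameter shifts relating the derivative sequences to the shifted classical polynomials, the identity $\tilde w'/\tilde w = \sigma^\nu$, and the vanishing of all boundary terms arising in the iterated integration by parts (with the Hermite case degenerating to the elementary observation that $\sigma=1$ and $\sigma^\nu$ is annihilated by any positive-order derivative).
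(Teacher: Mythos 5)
Your proof is correct and takes a genuinely different route from the paper's. You start from the already-established identity \eqref{rho jn v2} and show that every connection coefficient $\gamma_{n-j+\nu,0}^{(j-\nu,j)}$ appearing in the sum vanishes individually: identifying the derivative sequences with shifted-parameter classical families orthogonal with respect to $\sigma^{j-\nu}w$ and $\sigma^{j}w$, extracting the $k=0$ coefficient by orthogonality, and killing the resulting integral with Rodrigues' formula and $N$-fold integration by parts. The resulting criterion $N=n-j+\nu>\nu(q+1)$ is exactly tight against the claimed range $j\leqslant n-m-q-1$, which is a good sign that this is the ``right'' mechanism. The paper instead works directly with the convolution integral: it Taylor-expands $P_m(x-t)$ about $t=-a$, uses orthogonality to expand $(t+a)^{\nu}P_n(t)$ in $\{P_k(t)\}_{n-\nu\leqslant k\leqslant n+\nu}$, antidifferentiates each $P_k$ via the classical structural relation $P_k=\sum_{r=k-q}^{k+1}\tilde{\xi}_{k,r}\frac{\md P_r}{\md t}$ (which is where $q$ enters), and then tracks index ranges: the integrated part only produces $P_j(x+a)$ with $j\geqslant n-m-q$, while the boundary terms contribute only degrees $j\leqslant m$. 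Your approach buys a sharper, more local statement --- the bandedness of the connection between derivative sequences, i.e.\ the vanishing of the individual $\gamma$'s --- and reuses Theorem \ref{thm: rho k geq m} rather than redoing the expansion. What the paper's approach buys is a purely algebraic treatment that avoids integration by parts and vanishing-boundary-term hypotheses altogether; this matters chiefly for the Bessel case, where the ``orthogonality'' is not induced by a positive measure on a real interval, so your Rodrigues-plus-parts step there has to be reinterpreted through the moment functional or a contour integral. That, together with the family-by-family verification of the parameter shifts and boundary decay you already flag, is the only real work left in your write-up; the logical skeleton is sound.
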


\begin{proof} By orthogonality, we have
\begin{equation*}\label{PnPm ortho1}
(t+a)^{\nu} P_n(t) = \sum_{k=\max\{n-\nu,0\}}^{n+\nu} \lambda_{\nu,n}(k) P_k(t),
\end{equation*}
where $\lambda_{\nu,n}(k) = \dfrac{\langle(t+a)^{\nu} P_n(t),P_k(t)\rangle_w} 
{\langle P_k(t),P_k(t)\rangle_w}$, which, together with the Taylor expansion of 
$P_m(x-t)$ about $t=-a$
\begin{equation*}\label{Pm x t}
P_m(x-t) = \sum_{\nu=0}^m \frac{(-1)^{\nu}}{\nu!}\frac{\md^\nu P_m(x+a)}{\md 
(x+a)^\nu} (t+a)^{\nu},
\end{equation*}
gives
\begin{equation}\label{PnPm ortho}
P_m(x-t)P_n(t) = \sum_{\nu=0}^m \frac{(-1)^{\nu}}{\nu!}\frac{\md^\nu P_m(x+a)} 
{\md (x+a)^\nu} \sum_{k=n-\nu}^{n+\nu} \lambda_{\nu,n}(k) P_k(t).
\end{equation}
If, in addition, $\{P_n(x)\}_{n\geqslant 0}$ is a classical sequence, there 
exists a polynomial $\Phi(t)$ of degree at most $2$ and $\xi_{n,\nu}$ such that
\[
\Phi(t) \frac{\md P_{k+1}(t)}{\md t} = \sum_{r=k}^{k+\deg\Phi} \xi_{k,r}  
P_r(t),
\]
where $\xi_{k,k+\deg \Phi} \xi_{k,k} \neq 0 $, for all $n\geqslant 0$. 
Reversely, there are coefficients $ \tilde{\xi}_{k,r} $ such that
\begin{equation*}\label{derivative recurrence}
P_k(t) = \sum_{r=k-q}^{k+1} \tilde{\xi}_{k,r} \frac{\md P_r(t)}{\md t},
\end{equation*}
where $q = \deg\Phi(x)-1$ and $\tilde{\xi}_{k,k-q} \tilde{\xi}_{k,k+1} \neq 0$ 
\cite[Prop. 2.4]{Mar}. This can be deemed as a special case of \eqref{cc genP}. 
In particular, if $\{P_n(x)\}_{n \geqslant 0}$ is the classical orthogonal 
sequence of Jacobi, Bessel, Laguerre and Hermite polynomials, $\deg \Phi(x) = 
2,2,0$, and $1$, respectively.

We integrate \eqref{PnPm ortho} over the interval $(-a,x+a)$ to have
\[
\int_{-a}^{x+a} P_m(x-t) P_n(t) \md t = \sum_{\nu=0}^m \frac{(-1)^{\nu}}{\nu!} 
\frac{\md^\nu P_m(x+a)}{\md (x+a)^\nu}  \sum_{k=n-\nu}^{n+\nu} 
\lambda_{\nu,n}(k) \sum_{r=k-q}^{k+1} \tilde{\xi}_{k,r} \Big(P_{r}(x+a) 
- P_{r}(-a) \Big).
\]
Swapping the order of the last two sums and absorbing the innermost summation into new coefficients 
$\widetilde{\lambda}_{\nu,n}(k)$, we have
\[
\begin{multlined}
\int_{-a}^{x+a} P_m(x-t) P_n(t) \md t = \sum_{\nu=0}^m \frac{ (-1)^{\nu} 
}{\nu!}\frac{\md^\nu P_m(x+a)}{\md (x+a)^\nu} 
\sum_{k=n-\nu-q}^{n+\nu+1} \widetilde{\lambda}_{\nu,n}(k) \Big( 
P_{k}(x+a) - P_{k}(-a) \Big) \\
= \sum_{\nu=0}^m \frac{(-1)^{\nu}}{\nu!}\frac{\md^\nu P_m(x+a)}{\md (x+a)^\nu}  
\sum_{k=n-\nu-q}^{n+\nu+1} \widetilde{\lambda}_{\nu,n}(k) P_{k}(x+a)- 
\sum_{\nu=0}^m \frac{(-1)^{\nu}}{\nu!}S_{\nu,n}\frac{\md^\nu P_m(x+a)}{\md 
(x+a)^\nu},
\end{multlined}
\]
where $\widetilde{\lambda}_{\nu,n}(n-\nu-q)\widetilde{\lambda}_{\nu,n}
(n+\nu+1)\neq 0$ and $S_{\nu,n} = \sum_{k=n-\nu-q}^{n+\nu+1} 
\widetilde{\lambda}_{\nu,n}(k) P_{k}(-a)$. Since $\frac{\md^\nu P_m(x+a)}{\md 
(x+a)^\nu} $ is a polynomial of degree $m-\nu$, there are coefficients 
$\chi_{m,\nu,k}$ such that
\[
\begin{multlined}
\int_{-a}^{x+a} P_m(x-t) P_n(t) \md t = \sum_{\nu=0}^m \frac{(-1)^{\nu}}{\nu!} 
\sum_{k=n-\nu-q}^{n+\nu+1} \widetilde{\lambda}_{\nu,n}(k) 
\sum_{j=k-(m-\nu)}^{k+(m-\nu)}\chi_{m,\nu,k}(j)P_{j}(x+a) \\
- \sum_{\nu=0}^m \frac{(-1)^{\nu}}{\nu!} S_{\nu, n}  \sum_{j=0}^{m-\nu}
\gamma_{m-\nu,j}^{(\nu, 0)} P_j(x+a),
\end{multlined}
\]
where we have applied \eqref{cc genP} to the last sum. After swapping the order 
of the summations, we see there are coefficients $\widetilde{\chi}_{m,n}(j)$ such that 
\[
\begin{multlined}
\int_{-a}^{x+a} P_m(x-t) P_n(t) \md t = \sum_{j=n-m-q}^{m+n+1} 
\widetilde{\chi}_{m,n}(j)P_{j}(x+a) - \sum_{j=0}^{m}\widetilde{\chi}_{m,n}(j) 
P_j(x+a).
\end{multlined}
\]
When $n \geqslant 2m+q+2$, this means that $\rho^m_{j,n} = 0$ for $m+1 \leqslant 
j \leqslant n-m-q-1$.
\end{proof}

\section{Convolution of Jacobi polynomials}

In this section, we derive the $\rho$-coefficients in \eqref{gen conv} based on 
the results of Section 2 for the Jacobi-family, including the subcases of 
Gegenbauer, Legendre, and Chebyshev. To facilitate our discussion, we denote the 
Jacobi-based $\rho$-coefficients by $\rho_{j,n}^{m;(\alpha,\beta)}$ throughout 
this section, that is,  
\begin{equation}\label{gen conv Jac}
\int_{-1}^{x+1} P_m^{(\alpha,\beta)}(x-t) P_n^{(\alpha,\beta)}(t) \mathrm{d}t = 
\sum_{j=0}^{m+n+1} \rho_{j,n}^{m;(\alpha,\beta)} P_j^{(\alpha,\beta)}(x+1),
\end{equation}
which corresponds to \eqref{gen conv} with $a=1$. Here, 
$P_n^{(\alpha,\beta)}(x)$ denotes the Jacobi polynomial of degree $n 
\geqslant 0$ with $\alpha, \beta > -1$. With the most commonly-used 
normalization, which can be found, for example, in \cite[\S 4.2.1]{sze}, it can 
be represented as a terminating hypergeometric function
\begin{equation*}\label{Jac expression}
P_n^{(\alpha,\beta)}(x) = \frac{(\alpha+1)_n}{n!} {}_2F_{1}\left( 
\begin{array}{c}-n, n+\alpha+\beta+1 \\ \alpha+1 \end{array}; 
\frac{1-x}{2}\right), ~~~~~ n \geqslant 0,
\end{equation*}
where $(z)_n$ is the Pochhammer symbol, defined as
\[
(z)_0 \coloneqq 1 \quad \text{and} \quad (z)_n \coloneqq \prod_{\sigma=0}^{n-1} 
(z+\sigma) \text{ for } n \geqslant 1.
\]
Here and in the rest of this paper, we will use the generalized hypergeometric series 
\[
\pFq{p}{q}{a_1,\ldots, a_p}{b_1,\ldots, b_q}{x} \coloneqq \sum_{n=0}^{\infty} 
\frac{(a_1)_n\cdots (a_p)_n}{(b_1)_n\cdots (b_q)_n}\frac{x^n}{n!},
\]
and its detail can be found, for example, in \cite[Ch. 16]{DLMF}. 

The properties of $P_n^{(\alpha,\beta)}$ that we will make use of in the 
rest of this section include its value at $-1$
\begin{equation*}\label{JacMinus1}
P_n^{(\alpha,\beta)}(-1) = \frac{(-1)^n(\beta+1)_n}{n!} 
\end{equation*}
and a symmetry property 
\begin{equation}\label{Jac symm}
P_n^{(\alpha,\beta)}(-x) = (-1)^n P_n^{(\beta, \alpha)}(x).
\end{equation}

The sequence of Jacobi polynomials $\{P_n^{(\alpha,\beta)}(x)\}_{n\geqslant 0}$  
satisfy the orthogonality condition \cite[Ch. 4]{Ismail}
\begin{equation*}
\int_{-1}^1 P_k^{(\alpha,\beta)}\!(x)P_n^{(\alpha,\beta)}\!(x) (1-x)^\alpha 
(1+x)^{\beta} \md x = \frac{2^{\alpha+\beta+1} \Gamma(\alpha+n+1) 
\Gamma(\beta+n+1)}{n!\Gamma(\alpha+\beta+n+1)(\alpha+\beta+2n+1)}\delta_{k,n},
\end{equation*}
for any integers $ n,k\geqslant 0$. Being a member of a classical sequence, 
the $p$-th derivative of a Jacobi polynomial is another Jacobi polynomial with 
shifted parameters 
\begin{equation}\label{Jac der}
\frac{\mathrm{d}^p}{\mathrm{d}x^p} P_n^{(\alpha,\beta)}(x) = 
\frac{(\alpha+\beta+n+1)_p}{2^p}P_{n-p}^{(\alpha+p,\beta+p)}(x), 
\end{equation}
and, in particular, 
\begin{equation}\label{dpJacMinus1}
\left.\frac{\mathrm{d}^p}{\mathrm{d}x^p} P_n^{(\alpha,\beta)}(x)\right|_{x=-1}
= \frac{2^{-p} (-1)^{n+p} (p+\beta +1)_{n-p} (n+\alpha +\beta +1)_p}{(n-p)!}. 
\end{equation}

The properties above allow us to derive a connection formula between the 
derivatives of Jacobi polynomials.

\begin{lemma}\label{lem: cc der form Jacobi} The $p$-th and $q$-th derivatives 
of Jacobi polynomials are linearly connected via 
\begin{equation}\label{cc der form Jacobi}
\frac{\mathrm{d}^p}{\mathrm{d}x^p}P_{n+p}^{(\alpha,\beta)}(x) = \sum_{k=0}^n 
\gamma_{n,k}^{(p, q)}(\alpha,\beta) 
\frac{\mathrm{d}^q}{\mathrm{d}x^q}P_{k+q}^{(\alpha,\beta)}(x) 
\end{equation}
with 
\begin{equation}\label{cc der Jacobi}
\begin{multlined}
\gamma_{n,k}^{(p, q)}(\alpha,\beta) = \frac{(k+p+\alpha+1)_{n-k} (n+p+\alpha 
+\beta+1)_p (n+2p+\alpha +\beta +1)_k}{2^{p-q}(n-k)! (k+q+\alpha+\beta +1)_q 
(k+2 q+\alpha +\beta +1)_k}\\[2mm]
\times\pFq{3}{2}{k-n, \ k+q+\alpha +1,\ k+n+2p+\alpha+\beta+1}{k+p+\alpha 
+1,\ 2k+2q+\alpha+\beta+2}{1}.
\end{multlined}
\end{equation}
\end{lemma}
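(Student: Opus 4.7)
The plan is to reduce \eqref{cc der form Jacobi} to a classical connection problem between Jacobi polynomials with shifted parameters. Applying the differentiation identity \eqref{Jac der} to both sides converts the left-hand side into $\frac{(n+p+\alpha+\beta+1)_p}{2^p}P_n^{(\alpha+p,\beta+p)}(x)$ and the $k$-th summand on the right into $\frac{(k+q+\alpha+\beta+1)_q}{2^q}\gamma_{n,k}^{(p,q)}(\alpha,\beta)\,P_k^{(\alpha+q,\beta+q)}(x)$. Hence determining $\gamma_{n,k}^{(p,q)}(\alpha,\beta)$ reduces to identifying the coefficients $c_{n,k}$ in the expansion
\[
P_n^{(\alpha+p,\beta+p)}(x) = \sum_{k=0}^n c_{n,k}\,P_k^{(\alpha+q,\beta+q)}(x),
\]
after which one recovers $\gamma_{n,k}^{(p,q)}(\alpha,\beta) = \tfrac{2^{p-q}(k+q+\alpha+\beta+1)_q}{(n+p+\alpha+\beta+1)_p}\,c_{n,k}$.

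To extract $c_{n,k}$, I would use the orthogonality of $\{P_k^{(\alpha+q,\beta+q)}\}_{k\geqslant 0}$ on $[-1,1]$ with weight $(1-x)^{\alpha+q}(1+x)^{\beta+q}$, evaluating
\[
c_{n,k}=\frac{1}{h_k^{(\alpha+q,\beta+q)}}\int_{-1}^1 P_n^{(\alpha+p,\beta+p)}(x)\,P_k^{(\alpha+q,\beta+q)}(x)\,(1-x)^{\alpha+q}(1+x)^{\beta+q}\,\md x.
\]
Substituting the terminating ${}_2F_1$ representation of $P_n^{(\alpha+p,\beta+p)}(x)$ expanded about $x=1$ and interchanging sum and integral reduces the task to a finite family of moment integrals of the form $\int_{-1}^1 (1-x)^{\alpha+q+j}(1+x)^{\beta+q}P_k^{(\alpha+q,\beta+q)}(x)\,\md x$, each of which is evaluable in closed form via the Jacobi Rodrigues formula followed by $k$ integrations by parts (the boundary terms vanishing because of the $(1\pm x)^{\alpha+q+k}$ and $(1\pm x)^{\beta+q+k}$ factors). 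Collecting these contributions with the explicit value of $h_k^{(\alpha+q,\beta+q)}$ yields $c_{n,k}$ as a single terminating ${}_3F_2$ of unit argument.

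The main obstacle is the Pochhammer-symbol bookkeeping needed to rearrange the raw outcome into the shape of \eqref{cc der Jacobi}. The factors $(k+p+\alpha+1)_{n-k}$ and $(n+p+\alpha+\beta+1)_p$ arise from the leading coefficient of the ${}_2F_1$ representation of $P_n^{(\alpha+p,\beta+p)}$, while $(k+2q+\alpha+\beta+1)_k$ descends from $h_k^{(\alpha+q,\beta+q)}$; obtaining the numerator factor $(n+2p+\alpha+\beta+1)_k$ together with the precise ${}_3F_2$-parameters displayed in \eqref{cc der Jacobi} will in general require one Thomae-type transformation of the raw hypergeometric, combined with repeated use of the identities $(z)_{m+j}=(z)_m(z+m)_j$ and $(z+k)_{m-k}=(z)_m/(z)_k$. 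The residual factor $2^{p-q}$ then emerges by combining the two prefactors supplied by \eqref{Jac der}. As a cross-check, specialising $q=0$ should collapse the ${}_3F_2$ to a ${}_2F_1$ consistent with the standard expansion of $P_n^{(\alpha+p,\beta+p)}$ in the Jacobi basis $\{P_k^{(\alpha,\beta)}\}$, providing a sanity test for the final Pochhammer arrangement.
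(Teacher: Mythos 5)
Your reduction is exactly the paper's: apply \eqref{Jac der} to both sides of \eqref{cc der form Jacobi} so that everything hinges on the connection coefficients $c_{n,k}$ in $P_n^{(\alpha+p,\beta+p)}(x)=\sum_{k=0}^n c_{n,k}P_k^{(\alpha+q,\beta+q)}(x)$. Where you diverge is in how those coefficients are obtained: the paper simply cites the classical Jacobi connection formula \eqref{cc form Jacobi}--\eqref{cc Jacobi} (from Andrews--Askey--Roy, Ismail, or DLMF 18.18.14) with parameters $(\alpha+p,\beta+p;\alpha+q,\beta+q)$, so its proof is two lines, whereas you propose to re-derive that formula from orthogonality, the terminating ${}_2F_1$ representation, and the Rodrigues formula with $k$ integrations by parts. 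That derivation is the standard one and does work (the boundary terms vanish as you say), but it buys you only self-containedness at the cost of the Thomae-type rearrangement you yourself flag as the main obstacle; the paper's route makes the final ${}_3F_2$ parameters in \eqref{cc der Jacobi} drop out with no transformation at all. One concrete slip to fix: your displayed relation between $\gamma_{n,k}^{(p,q)}(\alpha,\beta)$ and $c_{n,k}$ is inverted. From $\frac{(n+p+\alpha+\beta+1)_p}{2^{p}}c_{n,k}=\gamma_{n,k}^{(p,q)}(\alpha,\beta)\frac{(k+q+\alpha+\beta+1)_q}{2^{q}}$ one gets
\begin{equation*}
\gamma_{n,k}^{(p,q)}(\alpha,\beta)=\frac{(n+p+\alpha+\beta+1)_p}{2^{p-q}\,(k+q+\alpha+\beta+1)_q}\,c_{n,k},
\end{equation*}
which is \eqref{cc der Jacobi 2} and is consistent with the placement of $(n+p+\alpha+\beta+1)_p$ in the numerator and $2^{p-q}(k+q+\alpha+\beta+1)_q$ in the denominator of \eqref{cc der Jacobi}; as written, your reciprocal prefactor would not reproduce the stated formula.
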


\begin{proof}
The following connection formula, which can be found in \cite[p.357]{AAR}, 
\cite[Theorem 9.1.1]{Ismail}, or \cite[Eq. (18.18.14)]{DLMF}, relates Jacobi 
polynomials with distinct parameters: 
\begin{equation}\label{cc form Jacobi}
P_n^{(\alpha,\beta)}(x) = \sum_{k=0}^n a_{n,k}^{(\alpha,\beta;\gamma,\delta)} 
P_k^{(\gamma,\delta)}(x),
\end{equation}
where
\begin{equation}\label{cc Jacobi}
a_{n,k}^{(\alpha,\beta;\gamma,\delta)} = \frac{(k+\alpha+1)_{n-k} 
(n+\alpha+\beta+1)_k}{(n-k)! (k+\gamma+\delta+1)_k} 
\pFq{3}{2}{k-n,n+k+\alpha+\beta+1,k+\gamma+1}{k+\alpha+1,2k+\gamma+\delta+2}
{ 1}. 
\end{equation}
Combining \eqref{Jac der} and \eqref{cc form Jacobi}, we have  
\eqref{cc der form Jacobi} with 
\begin{equation}\label{cc der Jacobi 2}
\gamma_{n,k}^{(p, q)}(\alpha,\beta) = 
\frac{(n+p+\alpha+\beta+1)_p}{2^{p-q}(k+q+\alpha+\beta+1)_q} 
a_{n,k}^{(\alpha+p, \beta+p; \alpha+q,\beta+q)}, 
\end{equation}
which, with \eqref{cc Jacobi} substituted in, gives \eqref{cc der Jacobi}.
\end{proof}

\begin{remark} Via \eqref{cc der form Jacobi}, the symmetry property \eqref{Jac 
symm} implies:
\begin{equation*}\label{Jac gamma symmetry}
\gamma_{n,k}^{(p, q)}(\alpha,\beta) = (-1)^{n+k} \ \gamma_{n,k}^{(p, 
q)}(\beta, \alpha). 
\end{equation*}
\end{remark}

One last ingredient we need for deriving $\rho_{k,n}^{m;(\alpha,\beta)}$ is the 
$b$-coefficients in \eqref{xn to Pn gen} for representing the monomial basis in 
terms of Jacobi polynomials. 
\begin{lemma}
The connection coefficients ${b}_{n,k}{(\alpha,\beta)}$ in 
\begin{equation}\label{xn to Jac}
(x+1)^n = \sum_{k=0}^n {b}_{n,k}{(\alpha,\beta)} P^{(\alpha,\beta)}_k(x), 
\end{equation}
are given by
\begin{equation}\label{xn to Jac coeffs}
{b}_{n,k}{(\alpha,\beta)} = 2^n n! (\beta+1)_n \frac{(\alpha+\beta+2k+1) 
\Gamma(\alpha+\beta+k+1)}{(\beta+1)_k \Gamma(\alpha+\beta+n+k+2) (n-k)!}. 
\end{equation}
\end{lemma}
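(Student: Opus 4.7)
The plan is to exploit the orthogonality of the Jacobi polynomials to extract $b_{n,k}(\alpha,\beta)$ as a Fourier-type coefficient, then evaluate the resulting integral via the Rodrigues formula. Writing $w(x) = (1-x)^\alpha(1+x)^\beta$ for the Jacobi weight, orthogonality applied to \eqref{xn to Jac} gives
\begin{equation*}
b_{n,k}(\alpha,\beta) \;=\; \frac{1}{h_k}\int_{-1}^1 (x+1)^n\, P_k^{(\alpha,\beta)}(x)\, w(x)\,\md x,
\end{equation*}
where $h_k = \langle P_k^{(\alpha,\beta)},P_k^{(\alpha,\beta)}\rangle_w$ is the known normalization constant recorded earlier in this section.

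The main computation is the numerator integral. I would invoke the Rodrigues formula
\begin{equation*}
P_k^{(\alpha,\beta)}(x) = \frac{(-1)^k}{2^k k!}(1-x)^{-\alpha}(1+x)^{-\beta}\frac{\md^k}{\md x^k}\Bigl[(1-x)^{k+\alpha}(1+x)^{k+\beta}\Bigr],
\end{equation*}
so that the weight cancels and one is left with $\int_{-1}^1 (x+1)^n \frac{\md^k}{\md x^k}[(1-x)^{k+\alpha}(1+x)^{k+\beta}]\,\md x$ (up to the prefactor). Integration by parts $k$ times — with all boundary terms vanishing because $\alpha,\beta>-1$ and each intermediate antiderivative still carries a factor $(1-x)^{k+\alpha-j}(1+x)^{k+\beta-j}$ that is zero at the endpoints — reduces the integrand to $(-1)^k\,\frac{n!}{(n-k)!}(x+1)^{n-k}(1-x)^{k+\alpha}(1+x)^{k+\beta}$. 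The remaining integral is a standard Beta integral:
\begin{equation*}
\int_{-1}^1 (1-x)^{k+\alpha}(1+x)^{n+\beta}\,\md x \;=\; 2^{n+k+\alpha+\beta+1}\,\frac{\Gamma(k+\alpha+1)\Gamma(n+\beta+1)}{\Gamma(n+k+\alpha+\beta+2)}.
\end{equation*}

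Assembling the pieces produces $b_{n,k}(\alpha,\beta)$ as the ratio of two explicit Gamma expressions. The only remaining work is algebraic: rewrite $\Gamma(n+\beta+1)/\Gamma(k+\beta+1) = (\beta+1)_n/(\beta+1)_k$, cancel the factors $2^{\alpha+\beta+1}$, $k!$, and $\Gamma(k+\alpha+1)$ against the corresponding pieces of $h_k$, and collect what remains to arrive at \eqref{xn to Jac coeffs}. I expect no genuine obstacle; the only subtle point is to keep the bookkeeping of Pochhammer and Gamma factors straight and to verify once that the boundary terms in the iterated integration by parts vanish uniformly in $j=1,\dots,k$, which follows from $k+\alpha-j > -1$ and $k+\beta-j > -1$ for $1\leqslant j\leqslant k$ since $\alpha,\beta>-1$.
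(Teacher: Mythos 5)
Your argument is correct, but note that the paper does not actually prove this lemma: it simply cites \cite[(4.2.15)]{Ismail}. What you have written is, in effect, the standard derivation behind that cited identity: extract $b_{n,k}(\alpha,\beta)$ as $\langle (x+1)^n, P_k^{(\alpha,\beta)}\rangle_w/h_k$, evaluate the numerator via the Rodrigues formula and $k$-fold integration by parts, and finish with the Beta integral $\int_{-1}^1(1-x)^{k+\alpha}(1+x)^{n+\beta}\,\md x = 2^{n+k+\alpha+\beta+1}\Gamma(k+\alpha+1)\Gamma(n+\beta+1)/\Gamma(n+k+\alpha+\beta+2)$. I checked the bookkeeping: the two factors of $(-1)^k$ cancel, $2^{n+\alpha+\beta+1}/2^{\alpha+\beta+1}=2^n$, the factors $k!$ and $\Gamma(k+\alpha+1)$ cancel against $h_k$, and $\Gamma(n+\beta+1)/\Gamma(k+\beta+1)=(\beta+1)_n/(\beta+1)_k$ delivers exactly \eqref{xn to Jac coeffs}. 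So your proof is self-contained where the paper defers to a reference, which is a genuine (if modest) improvement. One small imprecision: the boundary terms at the $j$-th integration by parts involve the $(k-j)$-th derivative of $(1-x)^{k+\alpha}(1+x)^{k+\beta}$, whose every term carries the common factor $(1-x)^{\alpha+j}(1+x)^{\beta+j}$; the correct reason these vanish at $x=\pm1$ is that the exponents satisfy $\alpha+j\geqslant\alpha+1>0$ and $\beta+j\geqslant\beta+1>0$ for $j\geqslant 1$ --- the threshold for vanishing is that the exponent be positive, not merely greater than $-1$ as you state, and the exponents you quote, $k+\alpha-j$ and $k+\beta-j$ over $1\leqslant j\leqslant k$, would dip down to $\alpha$ and $\beta$, where $(1-x)^{\alpha}$ with $-1<\alpha<0$ is unbounded at $x=1$ rather than zero. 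The conclusion is unaffected, but the justification should be stated with the correct exponents and the correct inequality.
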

\begin{proof}
See, for example, \cite[(4.2.15)]{Ismail} for the proof.
\end{proof}

\subsection{The Jacobi-based convolution coefficients}

Though we could derive the Jacobi-based convolution coefficients directly from 
the orthogonality of Jacobi polynomials, we opt to find the explicit 
expressions for the $\rho$-coefficients in the expansion of the integral 
\eqref{gen conv Jac} from Theorem \ref{thm: rho k geq m} by substituting in 
\eqref{rho jn v1} and \eqref{rho jn v23} the expressions \eqref{dpJacMinus1}, 
\eqref{cc der Jacobi} and \eqref{xn to Jac coeffs}.

\begin{theorem}\label{thm2 RJac any j v2} 
Let $m$ and $n$ be two positive integers with $n\geqslant m$. The $\rho$ 
coefficients in the expansion \eqref{gen conv Jac} can be expressed as
\begin{subequations}\label{Jac Rjn}
\begin{empheq}{alignat=3}
\label{Jac Rjn j geq n_m}
\rho_{j,n}^{m;(\alpha,\beta)} &= \sum_{\nu=\max(1,|j-n|)}^{m+1} 
\varpi_{j,\nu}^{m,n}(\alpha,\beta) & \quad \text{ for } j\geqslant \max(m+1, 
n-m-1), \\
\label{Jac Rjn j geq m}
\rho_{j,n}^{m;(\alpha,\beta)} &= 0 & \quad \text{ for } m+1\leqslant 
j\leqslant n-m-2, \text{ if } n \geqslant 2m+3,\\
\label{Jac Rjn j leq m} \rho_{j,n}^{m;(\alpha,\beta)} &= \sum_{\nu=1}^{j}  
\varpi_{j,\nu}^{n,m}(\alpha,\beta) + \sum_{\nu=j+1}^{n+1} 
d_{\nu,j,n}^{m}(\alpha,\beta) & \quad \text{ for } 0\leqslant j \leqslant m,
\end{empheq}
where
\begin{equation}\label{Jac varpi}
\begin{multlined}
\varpi_{j,\nu}^{m,n}(\alpha,\beta) = \frac{2(-1)^{m+\nu-1}(j-\nu+\alpha 
+1)_{n-j+\nu} (n+\alpha +\beta+1)_{j-\nu} (\beta +\nu)_{m-\nu+1} (m+\alpha 
+\beta +1)_{\nu-1}}{(j+\alpha +\beta +1)_j(m+1-\nu)!(n-j+\nu)!} \\[2mm]
\times \pFq{3}{2}{j-n-\nu,j+\alpha+1,n+j-\nu+\alpha +\beta +1}{j-\nu 
+\alpha +1,2 j+\alpha +\beta +2}{1},
\end{multlined}
\end{equation}
and
\begin{equation}\label{Jac d}
\begin{multlined}
d_{\nu,j,n}^{m}(\alpha,\beta) = \frac{2(-1)^{m+n+1+\nu} (\alpha +\beta +2j+1) 
(\beta+\nu)(j+\beta +1)_{m-j} (\beta +1)_n (n+1+\alpha +\beta 
)_{\nu-1}}{m!(n+1-\nu)!(\nu-j)!(j+\alpha+\beta +1)_{\nu+1}}\\[2mm]
\times \pFq{4}{3}{1,-m,\beta+\nu +1,m+\alpha +\beta +1}{\nu-j+1,\beta 
+1,j+\alpha +\beta +\nu +2}{1}.
\end{multlined}
\end{equation}
\end{subequations}
\end{theorem}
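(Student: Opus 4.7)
The plan is to specialise Theorem \ref{thm: rho k geq m} to the Jacobi case by substituting the three Jacobi-specific ingredients now at our disposal: the derivative-at-$-1$ formula \eqref{dpJacMinus1}, the derivative-to-derivative connection coefficients $\gamma_{n,k}^{(p,q)}(\alpha,\beta)$ from Lemma \ref{lem: cc der form Jacobi}, and the monomial-to-Jacobi coefficients $b_{n,k}(\alpha,\beta)$ from \eqref{xn to Jac coeffs}. The three identities in \eqref{Jac Rjn} correspond to the three branches of Theorem \ref{thm: rho k geq m}, except that the middle vanishing range \eqref{Jac Rjn j geq m} is not visible from Theorem \ref{thm: rho k geq m} itself; it is supplied instead by Theorem \ref{thm: classical gen} applied with the Jacobi value $q=1$ (since the characteristic polynomial $\Phi$ has degree $2$), which forces $\rho_{j,n}^{m;(\alpha,\beta)}=0$ on $m+1\leqslant j\leqslant n-m-2$ as soon as $n\geqslant 2m+3$.

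For \eqref{Jac Rjn j geq n_m} I would start from \eqref{rho jn v2}, insert $\gamma_{n-j+\nu,0}^{(j-\nu,j)}(\alpha,\beta)$ from \eqref{cc der Jacobi} with $k=0$, $p=j-\nu$, $q=j$, and multiply by the $(\nu-1)$-th derivative of $P_m^{(\alpha,\beta)}$ at $-1$ read off from \eqref{dpJacMinus1}. Consolidating Pochhammer symbols and powers of $2$, and tracking the sign $(-1)^{m+\nu-1}$, produces $\varpi_{j,\nu}^{m,n}(\alpha,\beta)$ as in \eqref{Jac varpi}; the ${}_3F_2(1)$ on the right-hand side is inherited unchanged from \eqref{cc der Jacobi}. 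The first sum in \eqref{Jac Rjn j leq m} follows from exactly the same computation applied to the first sum on the right-hand side of \eqref{rho jn v3}, with the roles of $m$ and $n$ exchanged in both the $\gamma$-factor and the derivative factor.

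The main work, and the main obstacle, lies in the second double sum of \eqref{rho jn v3}, whose inner $p$-sum must be evaluated in closed form as a terminating ${}_4F_3(1)$. After substituting \eqref{xn to Jac coeffs} and \eqref{dpJacMinus1}, the key consolidation
\[
(\beta+1)_{p+\nu}\,(\beta+p+1)_{m-p} \;=\; (\beta+p+1)_{\nu}\,(\beta+1)_m,
\]
together with the standard identities $(-m)_p = (-1)^p m!/(m-p)!$, $(\nu-j)!/(p+\nu-j)! = 1/(\nu-j+1)_p$, $\Gamma(\alpha+\beta+\nu+j+2)/\Gamma(\alpha+\beta+p+\nu+j+2) = 1/(\alpha+\beta+\nu+j+2)_p$, and $(\beta+p+1)_{\nu}/(\beta+1)_{\nu} = (\beta+\nu+1)_p/(\beta+1)_p$, allows one to pull a $p$-independent prefactor out of the summand and to recognise what remains as
\[
\sum_{p=0}^{m}\frac{(1)_p\,(-m)_p\,(\beta+\nu+1)_p\,(m+\alpha+\beta+1)_p}{(\nu-j+1)_p\,(\beta+1)_p\,(\alpha+\beta+\nu+j+2)_p\,p!},
\]
i.e.\ the ${}_4F_3(1)$ appearing in \eqref{Jac d}. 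The extracted prefactor is then matched with that of \eqref{Jac d} using the further simplifications $(\beta+1)_{\nu}(\beta+\nu)_{n-\nu+1} = (\beta+\nu)(\beta+1)_n$ and $\Gamma(\alpha+\beta+j+1)/\Gamma(\alpha+\beta+\nu+j+2) = 1/(\alpha+\beta+j+1)_{\nu+1}$, completing the proof.
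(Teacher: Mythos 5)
Your proposal is correct and follows essentially the same route as the paper's proof: the zero band comes from Theorem \ref{thm: classical gen} with $q=1$, the outer sums come from substituting \eqref{dpJacMinus1} and \eqref{cc der Jacobi} into \eqref{rho jn v2}--\eqref{rho jn v3}, and the inner $p$-sum is consolidated into the terminating ${}_4F_3(1)$ of \eqref{Jac d} via exactly the Pochhammer identities you list. The paper simply states the result of that inner-sum evaluation, so your version is, if anything, slightly more explicit about the intermediate algebra.
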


\begin{proof} 
The zero coefficients given by \eqref{Jac Rjn j geq m} are readily known from
Theorem \ref{thm: classical gen}. 

Following \eqref{rho jn v23} with $a=1$, we have
\begin{subequations}\label{RJac pf1}
\begin{equation}\label{RJac pf1 1}
\rho_{j,n}^{m;(\alpha,\beta)} = \sum_{\nu=\max(1,j-n)}^{m+1} 
\gamma_{n-j+\nu,0}^{(j-\nu,j)}(\alpha,\beta) 
\dP{\nu-1}{x}{P_m^{(\alpha,\beta)}(x)}~~~~~\text{ for } j\geqslant m+1,
\end{equation}
and
\begin{equation}\label{RJac pf1 2}
\begin{multlined}
\rho_{j,n}^{m;(\alpha,\beta)} = 
\sum\limits_{\nu=1}^{j}\gamma_{m-j+\nu,0}^{(j-\nu,j)} 
(\alpha,\beta)\dP{\nu-1}{x}{P_n^{(\alpha,\beta)}(x)} \\
\hspace{1cm}+\sum\limits_{\nu=j+1}^{n+1}\left(\dP{\nu-1}{x}{P_n^{(\alpha,
\beta)}(x)}\sum \limits_{k=0}^{m} \frac{b_{k+\nu,j}(\alpha,\beta)}{(k+\nu)!} 
\dP{k}{x}{P_m^{(\alpha,\beta)}(x)}\right)~~~~~\text{ for }0 \leqslant 
j\leqslant 
m.
\end{multlined}
\end{equation}
\end{subequations}
Combined with \eqref{dpJacMinus1} and \eqref{cc der Jacobi}, \eqref{RJac pf1 
1} gives \eqref{Jac Rjn j geq n_m} and \eqref{Jac varpi}. Similarly, the first 
sum on the right hand side of \eqref{RJac pf1 2} yields the first sum in 
\eqref{Jac Rjn j leq m}. 

To find $d_{\nu,j,n}^{m}(\alpha,\beta)$ of the second sum in \eqref{Jac Rjn j 
leq m}, we first calculate the inner sum of \eqref{RJac pf1 2} to have
\begin{equation}\label{RJac pf1 3}
\begin{multlined}
\sum \limits_{k=0}^{m}\frac{b_{k+\nu,j}(\alpha,\beta)}{(k+\nu)!}\dP{k}{x}{P_m^{
(\alpha,\beta)}(x)} = \frac{2^{\nu} (-1)^{m} (\alpha +\beta +2j+1) \Gamma 
(m+\beta +1)}{m!\Gamma (\nu-j +1)\Gamma (j+\beta +1)} \\
\hspace{.5cm} \times \frac{\Gamma (\beta +\nu +1)\Gamma (j+\alpha +\beta 
+1)}{\Gamma (\beta +1) \Gamma (j+\alpha +\beta +\nu +2)} \sum \limits_{k=0}^{m} 
\frac{(-m)_k(\beta +\nu +1)_k (m+\alpha +\beta +1)_k}{ (\beta +1)_k(-j+\nu +1)_k 
(j+\alpha +\beta +\nu +2)_k}.
\end{multlined}
\end{equation}
Multiplying by $\dP{\nu-1}{x}{P_n^{(\alpha,\beta)}(x)}$ the factors in 
\eqref{RJac pf1 3} that are independent of index $k$ and simplifying yields
\begin{equation*}
\begin{multlined}
\frac{2^{\nu} (-1)^{m} (\alpha +\beta +2 j+1) \Gamma (m+\beta +1) \Gamma (\beta 
+\nu +1)\Gamma (j+\alpha +\beta +1)}{m!\Gamma (\nu-j +1)\Gamma (j+\beta +1) 
\Gamma (\beta +1) \Gamma (j+\alpha +\beta +\nu +2)}\dP{\nu-1}{x}{ 
P_n^{(\alpha,\beta)}}\\[2mm]
=(-1)^{m+n+1+\nu} \frac{ 2 (\alpha +\beta +2 j+1) (\beta+\nu)(j+\beta 
+1)_{m-j} (\beta +1)_n (n+1+\alpha +\beta)_{\nu-1}} 
{m!(n+1-\nu)!(\nu-j)!(j+\alpha +\beta +1)_{\nu+1}}.
\end{multlined}
\end{equation*}
The expression of $d_{\nu,j,n}^{m}(\alpha,\beta)$ then follows from the last 
two equations and the fact that the $k$-sum in \eqref{RJac pf1 3} can be 
concisely written as a generalized hypergeometric series:
\[
\sum_{k=0}^m \frac{(-m)_k(\beta +\nu +1)_k (m+\alpha +\beta +1)_k}{(\beta 
+1)_k(-j+\nu +1)_k (j+\alpha +\beta +\nu +2)_k} = \pFq{4}{3}{1,-m,\beta 
+\nu +1,m+\alpha +\beta +1}{\nu-j+1,\beta +1,j+\alpha +\beta +\nu +2}{1}.
\]
\end{proof}

\begin{figure}[ht]
\centering
\includegraphics[scale=0.6]{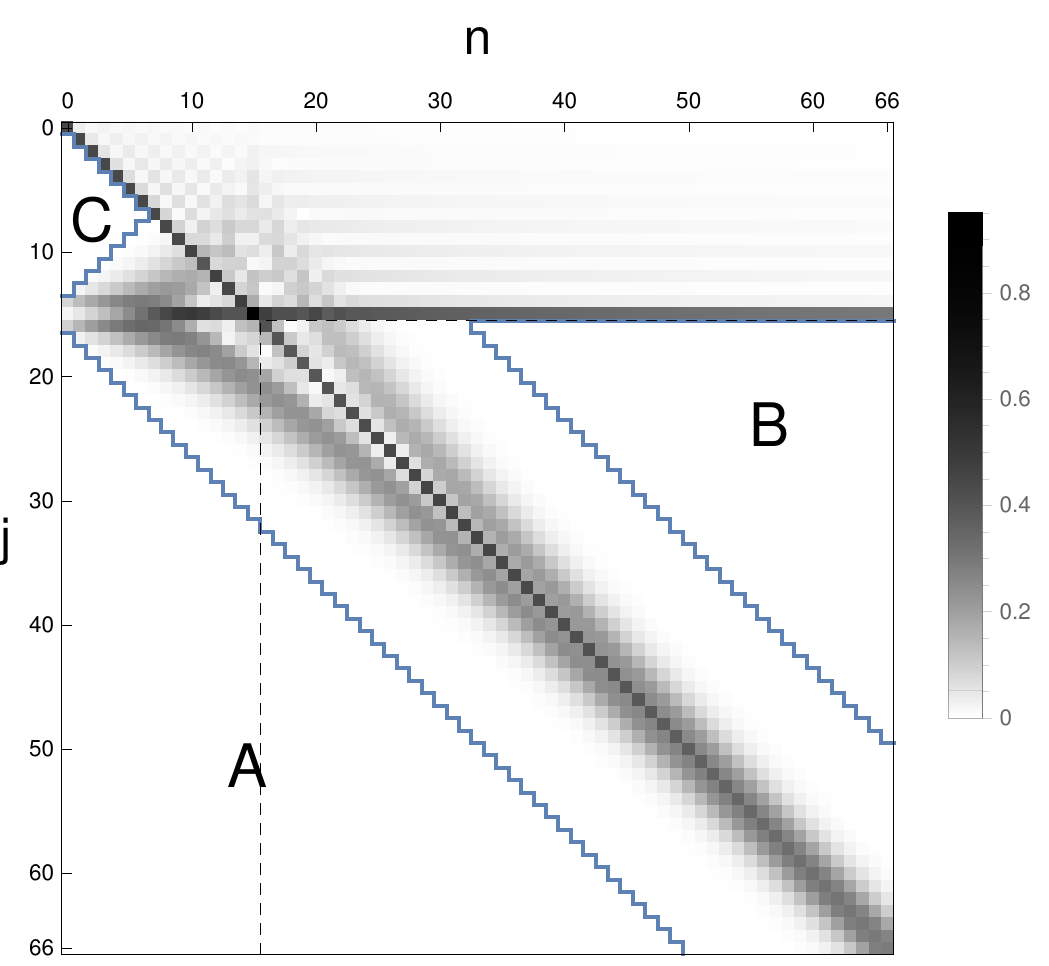}
\caption{The magnitude plot of the Jacobi coefficients $\rho_{j,n}^{15;(2.5, 
1.5)}$.}\label{FIG:jac}
\end{figure}

We calculate the coefficients $\rho_{j,n}^{m;(\alpha, \beta)}$ with $\alpha = 
2.5$ and $\beta=1.5$ for $m = 15$ using Theorem \ref{thm2 RJac any j 
v2}\footnote{In this example and the examples in the remainder of this paper, 
the calculation is carried out in {\sc Mathematica}.} and show their 
magnitudes for $0 \leqslant j, n \leqslant 66$ in Figure \ref{FIG:jac}. That 
is, the $n$-th column in this matrix plot corresponds to $\rho_{j,n}^{15;(2.5, 
1.5)}$. There are three regions of exact zeros which are indicated by solid 
lines. The coefficients $\rho_{j,n}^{15;(2.5, 1.5)}$ are exact zeros in Region A 
simply because the convolution of $P_{15}^{(2.5,1.5)}$ and $P_n^{(2.5,1.5)}$ is 
a polynomial of degree $n+16$. The zeros in Region B correspond to \eqref{Jac 
Rjn j geq m}. Finally, we note that $m \geqslant 2n+3$ in Region C and by 
swapping the roles of $m$ and $n$ we see the exact zeros in Region C, again, 
from \eqref{Jac Rjn j geq m}.

In \cite[Th. 4.8]{XL}, it is shown that the coefficients $\rho_{j,n}^{m;(\alpha, 
\beta)}$ are symmetric up to a scaling for $j,n\geqslant m+1$: 
\begin{equation}\label{symP}
\rho_{n, j}^{m;(\alpha,\beta)} = (-1)^{j+n}\frac{(\alpha+\beta+2n+1)(\alpha 
+1)_j (\beta +1)_j \Big( (\alpha+\beta+1)_n \Big)^2}{(\alpha+\beta+2j+1) 
(\alpha+1)_n (\beta+1)_n \Big( (\alpha+\beta+1)_j \Big)^2} \rho_{j, 
n}^{m;(\alpha,\beta)},
\end{equation}
which can be shown from \eqref{Jac varpi} with some tedious and lengthy work. 
However, this symmetry property is readily seen for the symmetric Jacobi case 
where $\alpha = \beta$ and we show this in the next subsection. In Figure 
\ref{FIG:jac}, the entries that satisfy this symmetry property are those in 
the lower right part that is bordered by the dashed lines. 

{\it Bateman's formula} for the expansion of Jacobi polynomials with two 
variables is well known (see, for example, \cite[Theorem 4.3.3]{Ismail}). 
In passing, we obtain the following proposition where we show the 
binomial-type tensor product expansion of $P_m^{(\alpha,\beta)}(x-t)$ in 
$P_j^{(\alpha,\beta)}(x+1)$ and $P_k^{(\alpha,\beta)}(t)$. This way, the 
variables $x$ and $t$ in a Jacobi-polynomial-based {\it difference kernel} 
\cite[p. 37]{lin} become detached.
\begin{proposition}\label{thm: binom type Jac}
For the $m$-th degree Jacobi polynomial $P_m^{(\alpha,\beta)}(x)$,
\begin{equation}\label{Jac Binom type form}
P_m^{(\alpha,\beta)}(x-t) = \sum_{k=0}^m\sum_{j=0}^{m-k} 
c_{m-k,j}^{m,(\alpha,\beta)} P_j^{(\alpha,\beta)}(x+1) P^{(\alpha,\beta)}_k(t),
\end{equation}
where
\begin{equation}\label{Jac Binom type form coeffs}
\begin{multlined}
c_{m-k,j}^{m,(\alpha,\beta)} = \sum_{\nu=k}^{m-j}   
\frac{(-1)^{\nu}(\alpha+\beta+2k+1)}{(\beta+\nu+1)_{k-\nu} 
(\alpha+\beta+k+1)_{\nu+1} (\nu-k)!} \\
\hspace{3cm} \times \frac{(j+\nu+\alpha+1)_{m-\nu-j}(m+\alpha+\beta+1)_\nu 
(m+\nu+\alpha+\beta+1)_j}{(m-\nu-j)! (j+\alpha+\beta+1)_j} \\ 
\hspace{3cm} \times \pFq{3}{2}{j-m+\nu,j+\alpha+1,j+m+\nu+\alpha+\beta 
+1}{j+\nu+\alpha+1,2j+\alpha+\beta+2}{1}.
\end{multlined}
\end{equation}
\end{proposition}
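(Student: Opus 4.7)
The plan is to separate the variables $x$ and $t$ in the difference kernel $P_m^{(\alpha,\beta)}(x-t)$ by a Taylor expansion in $t$ about $t=-1$, and then to recast each single-variable factor into the Jacobi basis using tools already developed in the paper.

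First, I would Taylor-expand $P_m^{(\alpha,\beta)}(x-t)$ in $t$ about $t=-1$. Since $\tfrac{\md}{\md t}P_m^{(\alpha,\beta)}(x-t) = -\tfrac{\md}{\md y} P_m^{(\alpha,\beta)}(y)\big|_{y=x-t}$, this yields
\[
P_m^{(\alpha,\beta)}(x-t) = \sum_{\nu=0}^{m} \frac{(-1)^{\nu}}{\nu!}\, \left.\frac{\md^{\nu} P_m^{(\alpha,\beta)}(y)}{\md y^{\nu}}\right|_{y=x+1} (t+1)^{\nu}.
\]
Next, I would apply the Jacobi differentiation rule \eqref{Jac der} followed by the Jacobi-to-Jacobi connection formula \eqref{cc form Jacobi}--\eqref{cc Jacobi} on the $x$-dependent factor,
\[
\left.\frac{\md^{\nu} P_m^{(\alpha,\beta)}(y)}{\md y^{\nu}}\right|_{y=x+1} = \frac{(m+\alpha+\beta+1)_{\nu}}{2^{\nu}}\sum_{j=0}^{m-\nu} a_{m-\nu,j}^{(\alpha+\nu,\beta+\nu;\alpha,\beta)} P_j^{(\alpha,\beta)}(x+1),
\]
and simultaneously use \eqref{xn to Jac}--\eqref{xn to Jac coeffs} to expand the $t$-dependent factor in the Jacobi basis,
\[
(t+1)^{\nu} = \sum_{k=0}^{\nu} b_{\nu,k}(\alpha,\beta)\, P_k^{(\alpha,\beta)}(t).
\]

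Substituting these two expansions back into the Taylor series separates the kernel into a linear combination of products $P_j^{(\alpha,\beta)}(x+1)\,P_k^{(\alpha,\beta)}(t)$. Swapping the order of summation so that the outer indices become $k$ and $j$, with the inner index running over $k\leqslant \nu\leqslant m-j$, immediately delivers \eqref{Jac Binom type form} with
\[
c_{m-k,j}^{m,(\alpha,\beta)} = \sum_{\nu=k}^{m-j} \frac{(-1)^{\nu}(m+\alpha+\beta+1)_{\nu}}{2^{\nu}\,\nu!}\, a_{m-\nu,j}^{(\alpha+\nu,\beta+\nu;\alpha,\beta)}\, b_{\nu,k}(\alpha,\beta).
\]

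The only remaining task, and the main technical obstacle, is to insert the explicit formulas \eqref{cc Jacobi} and \eqref{xn to Jac coeffs} and simplify to the compact form \eqref{Jac Binom type form coeffs}. This reduction is routine but bookkeeping-heavy: the $2^{\nu}\nu!$ in $b_{\nu,k}(\alpha,\beta)$ cancels the prefactor $1/(2^{\nu}\nu!)$; the ratio $(\beta+1)_{\nu}/(\beta+1)_k$ collapses to $1/(\beta+\nu+1)_{k-\nu}$; the Gamma quotient $\Gamma(\alpha+\beta+k+1)/\Gamma(\alpha+\beta+\nu+k+2)$ equals $1/(\alpha+\beta+k+1)_{\nu+1}$; and the Pochhammer prefactor of the hypergeometric term in $a_{m-\nu,j}^{(\alpha+\nu,\beta+\nu;\alpha,\beta)}$ specializes, under $(n,k,\alpha,\beta,\gamma,\delta) = (m-\nu,j,\alpha+\nu,\beta+\nu,\alpha,\beta)$, to $(j+\nu+\alpha+1)_{m-\nu-j}(m+\nu+\alpha+\beta+1)_j/\bigl[(m-\nu-j)!\,(j+\alpha+\beta+1)_j\bigr]$. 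The ${}_3F_2$ in \eqref{Jac Binom type form coeffs} is then inherited verbatim from the ${}_3F_2$ in \eqref{cc Jacobi} under the same parameter specialization.
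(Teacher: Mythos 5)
Your proposal is correct and follows essentially the same route as the paper's proof: Taylor expansion of $P_m^{(\alpha,\beta)}(x-t)$ about $t=-1$, expansion of the $x$-factor via the derivative connection coefficients and of $(t+1)^{\nu}$ via \eqref{xn to Jac}, then a swap of summations. Your intermediate formula is identical to the paper's, since by \eqref{cc der Jacobi 2} with $q=0$ one has $\gamma_{m-\nu,j}^{(\nu,0)}(\alpha,\beta)=2^{-\nu}(m+\alpha+\beta+1)_{\nu}\,a_{m-\nu,j}^{(\alpha+\nu,\beta+\nu;\alpha,\beta)}$, and your bookkeeping of the final simplification is accurate.
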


\begin{proof}
By Taylor expansion of $P_m^{(\alpha,\beta)}(x-t) $ about $t=-1$, we have
\[
P_m^{(\alpha,\beta)}(x-t) = \sum_{\nu=0}^m\frac{(-1)^{\nu}}{\nu!} 
\frac{\md^\nu P_m^{(\alpha,\beta)}(x+1)}{\md(x+1)^{\nu}}(t+1)^\nu.
\]
Using \eqref{cc der form Jacobi} and \eqref{xn to Jac}, the latter equation 
becomes
\[
P_m^{(\alpha,\beta)}(x-t) = \sum_{\nu=0}^m\frac{(-1)^{\nu}}{\nu!} \left(
\sum_{j=0}^{m-\nu} \gamma_{m-\nu,j}^{(\nu,0)} (\alpha,\beta) 
P_j^{(\alpha,\beta)}(x+1)\right) \left(\sum_{k=0}^\nu 
b_{\nu,k}{(\alpha,\beta)} P_k^{(\alpha,\beta)}(t)\right).
\]
We swap the order of the $k$- and the $\nu$-summations and then that of 
the $\nu$- and the $j$-summations to obtain \eqref{Jac Binom type form} with
\[
c_{m-k,j}^{m,(\alpha,\beta)} = \sum_{\nu=k}^{m-j} \frac{(-1)^{\nu}}{\nu!}
b_{\nu,k}{(\alpha,\beta)}\gamma_{m-\nu,j}^{(\nu,0)}(\alpha,\beta). 
\]
Substituting in the expressions of $b_{\nu,k}{(\alpha,\beta)}$ and 
$\gamma_{m-\nu,j}^{(\nu, 0)}(\alpha,\beta)$, given by \eqref{xn to Jac coeffs} 
and \eqref{cc der Jacobi} respectively, leads to \eqref{Jac Binom type form 
coeffs}. 
\end{proof} 


\subsection{Symmetric Jacobi polynomials}
In this subsection, we give in Corollary \ref{cor RSymJac any j v2} the 
convolution coefficients $\rho_{k,n}^{m;(\alpha,\beta)}$ for the Jacobi 
polynomials with $\alpha=\beta$. These coefficients could be obtained 
from Theorem \ref{thm2 RJac any j v2} by simply setting $\beta = \alpha$. 
However, a lengthy simplification is necessary in order to obtain exactly what 
is given in Corollary \ref{cor RSymJac any j v2}. The route we take is to obtain 
the explicit expressions for $\gamma_{n,k}^{(p, q)}(\alpha,\alpha)$, 
$b_{n,k}(\alpha, \alpha)$, and $\left.\frac{\mathrm{d}^p}{\mathrm{d}x^p} 
P_n^{(\alpha,\alpha)}(x)\right|_{x=-1}$, from which we derive 
$\varpi_{j,\nu}^{m,n}(\alpha, \alpha)$ and $d_{\nu,j,n}^{m}(\alpha, \alpha)$ 
using \eqref{RJac pf1}. 

\begin{lemma}
When $n-k$ is even, 
\begin{subequations}\label{cc der SymJacobi}
\begin{equation}
\gamma_{n,k}^{(p, q)}(\alpha, \alpha) = \ds \frac{ 
\left(p-q\right)_{\frac{n-k}{2}}(\alpha+k+q+\frac{1}{2}) \Gamma (k+q+2 
\alpha +1) \Gamma (n+p+\alpha +1) \Gamma \left(\frac{k+n+1}{2}+p+\alpha 
\right)}{2^{q-p}\left(\frac{n-k}{2} \right)! \Gamma(k+q+\alpha+1 ) \Gamma 
(n+p+2 \alpha +1) \Gamma \left(\frac{k+n+3}{2} +q+\alpha \right)},
\end{equation}
and 
\begin{equation}
\gamma_{n,k}^{(p, q)}(\alpha, \alpha)=0
\end{equation}
\end{subequations}
otherwise.
\end{lemma}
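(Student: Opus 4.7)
The proof naturally splits into two cases depending on the parity of $n-k$.

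For the vanishing case, the cleanest route is a symmetry/parity argument. Specialising \eqref{Jac symm} to $\beta=\alpha$ gives $P_r^{(\alpha,\alpha)}(-x)=(-1)^r P_r^{(\alpha,\alpha)}(x)$, so $\frac{\md^p}{\md x^p}P_{n+p}^{(\alpha,\alpha)}(x)$ has parity $(-1)^n$ (the $p$ derivatives contribute $(-1)^p$ and the polynomial contributes $(-1)^{n+p}$), and likewise $\frac{\md^q}{\md x^q}P_{k+q}^{(\alpha,\alpha)}(x)$ has parity $(-1)^k$. Comparing parities on both sides of \eqref{cc der form Jacobi} forces $\gamma_{n,k}^{(p,q)}(\alpha,\alpha)=0$ whenever $n-k$ is odd, which is the second assertion.

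For the even case, the plan is to start from \eqref{cc der Jacobi} with $\beta=\alpha$. The hypergeometric factor becomes
\[
\pFq{3}{2}{k-n,\; k+q+\alpha+1,\; k+n+2p+2\alpha+1}{k+p+\alpha+1,\; 2k+2q+2\alpha+2}{1}.
\]
The key observation is that this is a Watsonian ${}_3F_2(1)$: with the identification $a=k-n$, $b=k+n+2p+2\alpha+1$, $c=k+q+\alpha+1$, one checks directly that $\tfrac12(a+b+1)=k+p+\alpha+1$ and $2c=2k+2q+2\alpha+2$, matching the two lower parameters. Watson's summation theorem (see \cite[\S 16.4]{DLMF}) therefore yields a closed-form evaluation of this ${}_3F_2$ in terms of six Gamma functions.

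The remaining work, and the main obstacle, is purely algebraic: one has to reconcile the resulting product of Gammas with the compact form stated in the lemma. I would first convert the Pochhammer symbols $(k+q+2\alpha+1)_q$ and $(k+2q+2\alpha+1)_k$ in the prefactor of \eqref{cc der Jacobi} into Gamma-function ratios, then apply the Legendre duplication formula $\Gamma(2z)=2^{2z-1}\pi^{-1/2}\Gamma(z)\Gamma(z+\tfrac12)$ to split the doubled-argument Gammas; this absorbs the $\sqrt{\pi}$ coming out of Watson's theorem and consolidates the half-integer shifts visible in the target expression.

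Finally, setting $M=(n-k)/2$ (a nonnegative integer in the surviving case), I would rewrite the problematic factor $\Gamma(\tfrac{k-n+1}{2})^{-1}=\Gamma(\tfrac12-M)^{-1}$ via $\Gamma(\tfrac12-M)=(-1)^M\sqrt{\pi}/(\tfrac12)_M$, and transform the ratio $\Gamma(k+q-p+1)/\Gamma(\tfrac{k-n}{2}+q-p+1)$ through the identity $\Gamma(q-p+1)/\Gamma(q-p-M+1)=(-1)^M(p-q)_M$ (read formally, with pole-zero cancellation handling degenerate cases). The $(-1)^M$ signs cancel, the factor $(\tfrac12)_M$ combines with $(n-k)!=(2M)!$ via the duplication identity to leave $((n-k)/2)!$ in the denominator, and the remaining Gammas regroup into exactly the ratios $\Gamma(k+q+2\alpha+1)/\Gamma(k+q+\alpha+1)$, $\Gamma(n+p+\alpha+1)/\Gamma(n+p+2\alpha+1)$ and $\Gamma(\tfrac{k+n+1}{2}+p+\alpha)/\Gamma(\tfrac{k+n+3}{2}+q+\alpha)$, together with the prefactor $(\alpha+k+q+\tfrac12)\,2^{p-q}$ stated in the lemma.
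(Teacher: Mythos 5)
Your argument is correct, and it reaches the closed form by a genuinely different route from the paper. For the vanishing case your parity argument (comparing the parities $(-1)^n$ and $(-1)^k$ of the two sides of \eqref{cc der form Jacobi} when $\beta=\alpha$) is essentially what the paper does, except that the paper attributes the vanishing to the statement of \cite[Theorem 7.1.4]{AAR} rather than spelling it out. For the even case the paths diverge: the paper does not touch the ${}_3F_2$ in \eqref{cc der Jacobi} at all, but instead goes back to \eqref{cc der Jacobi 2} and imports the already-summed symmetric connection coefficients $a_{n,k}^{(\alpha+p,\alpha+p;\alpha+q,\alpha+q)}$ from \cite[Theorem 7.1.4]{AAR}, after which only the Legendre duplication formula is needed. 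You instead specialize \eqref{cc der Jacobi} to $\beta=\alpha$ and observe that the resulting series $\pFq{3}{2}{k-n,\,k+q+\alpha+1,\,k+n+2p+2\alpha+1}{k+p+\alpha+1,\,2k+2q+2\alpha+2}{1}$ is Watson-summable; your parameter identification is correct ($\tfrac12(a+b+1)$ and $2c$ do match the lower parameters), the terminating form of Watson's theorem applies since $a=k-n\leqslant 0$, and the subsequent reductions you list (the reflection identity for $\Gamma(\tfrac12-M)$, the conversion $\Gamma(q-p+1)/\Gamma(q-p-M+1)=(-1)^M(p-q)_M$ with the pole--zero cancellation caveat, and the duplication formula) are exactly the right tools and are consistent with the factor $(p-q)_{(n-k)/2}$ in the target. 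What your route buys is self-containedness: it derives the symmetric connection formula from the general one via a classical summation theorem, and it even recovers the odd-case vanishing for free from the pole of $\Gamma(\tfrac12(a+1))$. What the paper's route buys is brevity, since the Watson evaluation is in effect already packaged inside the cited theorem. The only part you leave unverified is the final regrouping of Gamma factors into the stated ratios, which is routine but tedious; I checked the leading structure and it does close up.
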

\begin{proof} The connection coefficients in \eqref{cc form Jacobi} with 
$\beta=\alpha$ and $\delta=\gamma$ can be found in \cite[Theorem 7.1.4]{AAR}:
\begin{equation}\label{sym a even}
a_{n,k}^{(\alpha+p,\alpha+p;\alpha+q,\alpha+q)}= 
\frac{(p-q)_{\frac{n-k}{2}}\left(q+\alpha +\frac{3}{2}\right)_k (2(q+\alpha 
)+1)_k (p+\alpha +1)_n \left(p+\alpha +\frac{1}{2}\right)_{\frac{k+n}{2}}}
{\left(\frac{n-k}{2}\right)! \left(q+\alpha +\frac{1}{2}\right)_k (q+\alpha 
+1)_k (2 (p+\alpha )+1)_n \left(q+\alpha +\frac{3}{2}\right)_{\frac{k+n}{2}}},
\end{equation}
when $n-k$ is even, while
\[ 
a_{n,k}^{(\alpha+p,\alpha+p;\alpha+q,\alpha+q)} = 0,
\] 
when $n-k$ is odd, implied by $P_n^{(\alpha,\alpha)}(-x)=(-1)^n 
P_n^{(\alpha,\alpha)}(x)$ \cite[Theorem 7.1.4]{AAR}. Now the relation \eqref{cc 
der Jacobi 2} between the $a$- and the $\gamma$-coefficients readily show that 
$\gamma_{n,k}^{(p, q)}(\alpha, \alpha) = 0$ when $n-k$ is odd. For the case 
where $n-k$ is even, we substitute \eqref{sym a even} in \eqref{cc der Jacobi 2} 
and simplify using the Legendre duplication formula \cite[Eq. (5.5.5)]{DLMF} to 
obtain \eqref{cc der SymJacobi}.
\end{proof}

\begin{corollary}\label{cor RSymJac any j v2} Let $m$ and $n$ be two positive 
integers with $n\geqslant m$ and suppose $\alpha>-1$ to be nonzero. For 
$\alpha = \beta$, the $\rho$-coefficients in the expansion \eqref{gen conv Jac} 
become
\begin{subequations}\label{SymJac Rjn}
\begin{empheq}{alignat=3}
\label{SymJac Rjn j geq m}
\rho_{j,n}^{m;(\alpha, \alpha)} &= \sum_{\nu=\max(1, |j-n|)}^{m+1} 
\varpi_{j,\nu}^{m,n}(\alpha, \alpha) & \quad \text{ for } j\geqslant 
\max(m+1,n-m-1), \\
\label{SymJac Rjn j geq m2}
\rho_{j,n}^{m;(\alpha, \alpha)} &=0 & \quad \text{ for } m+1 \leqslant j 
\leqslant n-m-2, \text{ if } n \geqslant 2m+3,\\
\label{SymJac Rjn j leq m} 
\rho_{j,n}^{m;(\alpha, \alpha)} &= \sum_{\nu=1}^{j} 
\varpi_{j,\nu}^{n,m}(\alpha, 
\alpha) + \sum_{\nu=j+1}^{n+1} d_{\nu,j,n}^{m}(\alpha, \alpha) & \quad \text{ 
for } 0\leqslant j \leqslant m,
\end{empheq}
where
\begin{equation}\label{varpi sym Jac even}
\begin{multlined}
\varpi_{j,\nu}^{m,n}(\alpha, \alpha) = 
\frac{2(-1)^{m+\nu+1}(\alpha+\nu)_{m+1-\nu}(m+2\alpha+1)_{\nu-1}(n+2 \alpha 
+1)_{j-\nu}}{(m-\nu+1)! \left(\frac{n+\nu-j}{2} \right)!} \\
\times \frac{(-\nu)_{\frac{n+\nu-j}{2}} 
\left(j+\alpha-\nu+\frac{1}{2}\right)_{\frac{n+\nu-j}{2}} (j+\alpha -\nu 
+1)_{n+\nu-j}}{(j+2\alpha+1)_j \left(j+\alpha
+\frac{3}{2}\right)_{\frac{n+\nu-j}{2}}(2 j+2\alpha-2\nu +1)_{n+\nu-j}}
\end{multlined}
\end{equation}
when $n+\nu-j$ is even and
\begin{equation}\label{varpi sym Jac odd}
\varpi_{j,\nu}^{m,n}(\alpha, \alpha) = 0
\end{equation}
otherwise. Here,
\begin{equation}\label{JacSym d}
\begin{multlined}
d_{\nu,j,n}^{m}(\alpha, \alpha) = \frac{2(-1)^{m+n+1+\nu} (2\alpha +2j+1) 
(\alpha +\nu)(j+ \alpha +1)_{m-j} (\alpha +1)_n 
(n+1+2\alpha)_{\nu-1}}{m!(n+1-\nu)!(\nu-j)!(j+2\alpha +1)_{\nu+1}}\\[2mm]
\times \pFq{4}{3}{1,-m, \alpha +\nu +1,m+2 \alpha +1}{\nu-j+1, \alpha +1,j+2 
\alpha +\nu +2}{1}.
\end{multlined}
\end{equation}
\end{subequations}
\end{corollary}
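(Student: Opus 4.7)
The plan is to specialize Theorem \ref{thm2 RJac any j v2} to the symmetric case $\beta=\alpha$, exploiting the fact that the preceding lemma already reduces the connection coefficients $\gamma_{n,k}^{(p,q)}(\alpha,\alpha)$ to a closed form (with a built-in parity rule) rather than a ${}_3F_2$. The middle vanishing block \eqref{SymJac Rjn j geq m2} is inherited with no work from Theorem \ref{thm: classical gen}, since the Jacobi case corresponds to $q=1$ there, and the hypothesis $n\geqslant 2m+3$ matches $n\geqslant 2m+q+2$.

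For the region $j\geqslant \max(m+1,n-m-1)$ I would start from \eqref{RJac pf1 1} with $\beta=\alpha$ and substitute the symmetric-Jacobi $\gamma$-coefficient from \eqref{cc der SymJacobi}. Setting $N=n-j+\nu$ and $K=0$ in the lemma's parity statement shows that $\gamma_{n-j+\nu,0}^{(j-\nu,j)}(\alpha,\alpha)$ vanishes precisely when $n+\nu-j$ is odd, which immediately yields \eqref{varpi sym Jac odd}. When $n+\nu-j$ is even, the surviving product of the closed-form $\gamma$ together with $\dP{\nu-1}{x}{P_m^{(\alpha,\alpha)}(x)}$ from \eqref{dpJacMinus1} (at $\beta=\alpha$) has to be collapsed to a single Pochhammer product. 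For the second regime $0\leqslant j\leqslant m$, I would apply \eqref{RJac pf1 2}: the first inner sum is handled identically to the previous paragraph with the roles of $m$ and $n$ swapped, producing $\varpi_{j,\nu}^{n,m}(\alpha,\alpha)$ with the same parity rule, while the second double sum is exactly the quantity $d_{\nu,j,n}^m(\alpha,\beta)$ of \eqref{Jac d} evaluated at $\beta=\alpha$, so \eqref{JacSym d} follows by direct substitution since the terminating ${}_4F_3$ has no obvious further reduction.

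The main obstacle will be bookkeeping in the hypergeometric simplification for the $\varpi$-coefficients: after substitution one is left with a ratio involving integer-shifted Pochhammer symbols such as $(k+q+2\alpha+1)_k$ and Gamma quotients like $\Gamma(\tfrac{k+n+1}{2}+p+\alpha)/\Gamma(\tfrac{k+n+3}{2}+q+\alpha)$, and these must be converted into the half-integer Pochhammer symbols $(j+\alpha-\nu+\tfrac12)_{(n+\nu-j)/2}$ and $(j+\alpha+\tfrac32)_{(n+\nu-j)/2}$ appearing in \eqref{varpi sym Jac even}. The tools for this are the Legendre duplication formula together with the standard reindexing identities $(a)_{N+k}=(a)_N(a+N)_k$ and $(a)_{n-k}=(-1)^k(a)_n/(1-a-n)_k$; applying these systematically converts the raw expression into exactly \eqref{varpi sym Jac even} and completes the proof.
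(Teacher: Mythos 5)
Your proposal is correct and follows essentially the same route as the paper: the paper likewise inherits the three-regime structure and the zero block from Theorem \ref{thm2 RJac any j v2}, substitutes the closed-form symmetric connection coefficients \eqref{cc der SymJacobi} (whose parity rule gives \eqref{varpi sym Jac odd}) together with the $\beta=\alpha$ specialization of \eqref{dpJacMinus1} into \eqref{RJac pf1 1} to obtain \eqref{varpi sym Jac even}, and obtains \eqref{JacSym d} by directly setting $\beta=\alpha$ in \eqref{Jac d}. The only work left implicit in both accounts is the routine Pochhammer/duplication-formula bookkeeping you describe.
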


\begin{proof}
The $\rho$-coefficients in \eqref{SymJac Rjn} inherit from \eqref{Jac Rjn}. 

From \eqref{dpJacMinus1}, we have
\begin{equation}\label{dpJacMinus1 sym}
\left. \frac{\mathrm{d}^p}{\mathrm{d}x^p} P_n^{(\alpha,\alpha)}(x)\right|_{x=-1}
= \frac{2^{-p} (-1)^{n+p} (p+\alpha +1)_{n-p} (n+2\alpha +1)_p}{(n-p)!}. 
\end{equation}

Substituting \eqref{cc der SymJacobi} and \eqref{dpJacMinus1 sym} into 
\eqref{RJac pf1 1}, we have \eqref{varpi sym Jac even} and \eqref{varpi sym Jac 
odd} after some algebraic simplifications.

Replacing $\beta$ by $\alpha$ in \eqref{Jac d} gives \eqref{JacSym d}.
\end{proof}

The following proposition instantiates the symmetry property for the symmetric
Jacobi-based coefficients, which can be easily derived from Corollary \ref{cor 
RSymJac any j v2}.

\begin{proposition}\label{prop: sym Rsym}
For $j,n \geqslant m+1$, the $\rho$-coefficients in \eqref{gen conv Jac} with 
$\beta=\alpha$ satisfy
\begin{equation}\label{rho ratio}
\rho_{j,n}^{m;(\alpha, \alpha)} = (-1)^{j+n} \frac{(2\alpha+2j+1) \left((2 
\alpha+1)_j\right)^2 \left((\alpha +1)_n\right)^2}{(2\alpha+2n+1) \left((\alpha 
+1)_j\right)^2 \left((2\alpha+1)_n\right)^2} \rho_{n,j}^{m;(\alpha, \alpha)}.
\end{equation}
\end{proposition}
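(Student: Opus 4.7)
The plan is to derive \eqref{rho ratio} from Corollary \ref{cor RSymJac any j v2} by establishing the stronger pointwise symmetry
\[
\varpi_{j,\nu}^{m,n}(\alpha,\alpha)=C(j,n,\alpha)\,\varpi_{n,\nu}^{m,j}(\alpha,\alpha),\qquad C(j,n,\alpha):=(-1)^{j+n}\frac{(2\alpha+2j+1)\bigl((2\alpha+1)_j\bigr)^{2}\bigl((\alpha+1)_n\bigr)^{2}}{(2\alpha+2n+1)\bigl((\alpha+1)_j\bigr)^{2}\bigl((2\alpha+1)_n\bigr)^{2}},
\]
for each $\nu\in[\max(1,|j-n|),m+1]$; summing on $\nu$ via \eqref{SymJac Rjn j geq m} then yields the proposition. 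Both $\rho_{j,n}^{m;(\alpha,\alpha)}$ and $\rho_{n,j}^{m;(\alpha,\alpha)}$ are represented over the same index set because $j,n\geq m+1$, and the parity selector in \eqref{varpi sym Jac even}--\eqref{varpi sym Jac odd} is invariant under $j\leftrightarrow n$ since $n+\nu-j$ and $j+\nu-n$ always share the same parity, so the nonzero summands match under the swap.

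To prove the pointwise identity, first factor out of \eqref{varpi sym Jac even} the $(j,n)$-free prefactor $2(-1)^{m+\nu+1}(\alpha+\nu)_{m+1-\nu}(m+2\alpha+1)_{\nu-1}/(m-\nu+1)!$, which cancels from the ratio $\varpi_{j,\nu}^{m,n}/\varpi_{n,\nu}^{m,j}$. Converting the remaining Pochhammers into Gamma functions with $p=(n+\nu-j)/2$ and $q=(j+\nu-n)/2$, short simplifications such as $(j+\alpha-\nu+1)_{2p}=\Gamma(n+\alpha+1)/\Gamma(j+\alpha-\nu+1)$ and $(n+2\alpha+1)_{j-\nu}=\Gamma(j+n+2\alpha+1-\nu)/\Gamma(n+2\alpha+1)$ expose four Gamma arguments that are symmetric under $j\leftrightarrow n$, namely $\Gamma(j+n+2\alpha+1-\nu)$, $\Gamma(\tfrac{j+n-\nu+1}{2}+\alpha)$, $\Gamma(\tfrac{j+n+\nu+3}{2}+\alpha)$ and $\Gamma(j+n+2\alpha-\nu+1)$. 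These, together with the factorials $p!,q!$ and $\Gamma(\nu+1)$ arising from $(-\nu)_p=(-1)^p\nu!/q!$, all cancel from the ratio, leaving only the overall sign $(-1)^{p-q}=(-1)^{n-j}=(-1)^{j+n}$.

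The residual ratio then consists of a $\nu$-dependent quotient $\Gamma(2j+2\alpha-2\nu+1)\,\Gamma(n+\alpha-\nu+\tfrac12)\,\Gamma(n+\alpha-\nu+1)/[\Gamma(j+\alpha-\nu+\tfrac12)\,\Gamma(j+\alpha-\nu+1)\,\Gamma(2n+2\alpha-2\nu+1)]$ multiplying a $\nu$-independent quotient $\Gamma(n+\alpha+1)\,\Gamma(j+2\alpha+1)^{2}\,\Gamma(j+\alpha+\tfrac32)\,\Gamma(2n+2\alpha+1)/[\Gamma(n+2\alpha+1)^{2}\,\Gamma(j+\alpha+1)\,\Gamma(n+\alpha+\tfrac32)\,\Gamma(2j+2\alpha+1)]$. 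Invoking the Legendre duplication formula $\Gamma(2z+1)=2^{2z}\pi^{-1/2}\Gamma(z+1)\Gamma(z+\tfrac12)$ collapses the $\nu$-dependent piece to the constant $2^{2(j-n)}$, and a second application to $\Gamma(2n+2\alpha+1)/\Gamma(2j+2\alpha+1)$ absorbs this power of $2$ while converting the surviving half-integer Gamma quotients into the factor $(2\alpha+2j+1)/(2\alpha+2n+1)$; reverting the remaining integer-shifted Gamma quotients back to Pochhammers then yields exactly $C(j,n,\alpha)$. The main obstacle I expect is the meticulous bookkeeping of the roughly dozen $(j,n)$- or $\nu$-dependent Pochhammers in \eqref{varpi sym Jac even}, and in particular recognizing that the duplication formula must be invoked \emph{twice}---once to dispose of all $\nu$-dependence and once to assemble the Pochhammer prefactor $(2\alpha+2j+1)/(2\alpha+2n+1)$---without which the ratio would not visibly be $\nu$-independent.
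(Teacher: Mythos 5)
Your proposal is correct and follows essentially the same route as the paper: establish the $\nu$-independent termwise ratio $\varpi_{j,\nu}^{m,n}(\alpha,\alpha)/\varpi_{n,\nu}^{m,j}(\alpha,\alpha)=C(j,n,\alpha)$ and sum over the (symmetric) index set in \eqref{SymJac Rjn j geq m}, with your Gamma-function and Legendre-duplication bookkeeping supplying the detail the paper leaves as ``easily derived.'' The only difference is cosmetic: the paper treats the range $|j-n|\geqslant m+2$ as a separate case in which both coefficients vanish (via \eqref{SymJac Rjn j geq m2} and the degree bound), whereas you absorb it by noting the sum over $\nu\in[\max(1,|j-n|),m+1]$ is then empty on both sides.
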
 
\begin{proof}

When $(n+\nu-j)$ is even, so is $(j+\nu-n)$ and \eqref{varpi sym Jac even} gives
\begin{equation}\label{varpi ratio}
\frac{\varpi_{j,\nu}^{m,n}(\alpha, \alpha)}{\varpi_{n,\nu}^{m,j}(\alpha,\alpha)} 
= (-1)^{j+n}\frac{(2\alpha+2j+1) \left((2\alpha+1)_j\right)^2
\left((\alpha+1)_n\right)^2}{(2\alpha+2n+1) \left((\alpha+1)_j\right)^2 
\left((2\alpha+1)_n\right)^2},
\end{equation}
which is independent from $\nu$. If $|n-j|\leqslant m+1$, \eqref{varpi ratio} and \eqref{SymJac 
Rjn j geq m} imply \eqref{rho ratio}.

If $n \geqslant 2m+3$, for $m+1 \leqslant j \leqslant n-m-2$ we have
$\rho_{j,n}^{m;(\alpha, \alpha)} = 0$, as indicated by \eqref{SymJac Rjn j geq 
m2}. Also, $j \leqslant n-m-2$ suggests $n \geqslant j+m+2$, in which case 
$\rho_{n,j}^{m;(\alpha, \alpha)} = 0$. Therefore, \eqref{rho ratio} holds too 
for $m+1 \leqslant j \leqslant n-m-2$.

Hence, \eqref{rho ratio} is true for $j, n \geqslant m+1$.
\end{proof}

\begin{figure}[ht]
\centering
\includegraphics[scale=0.6]{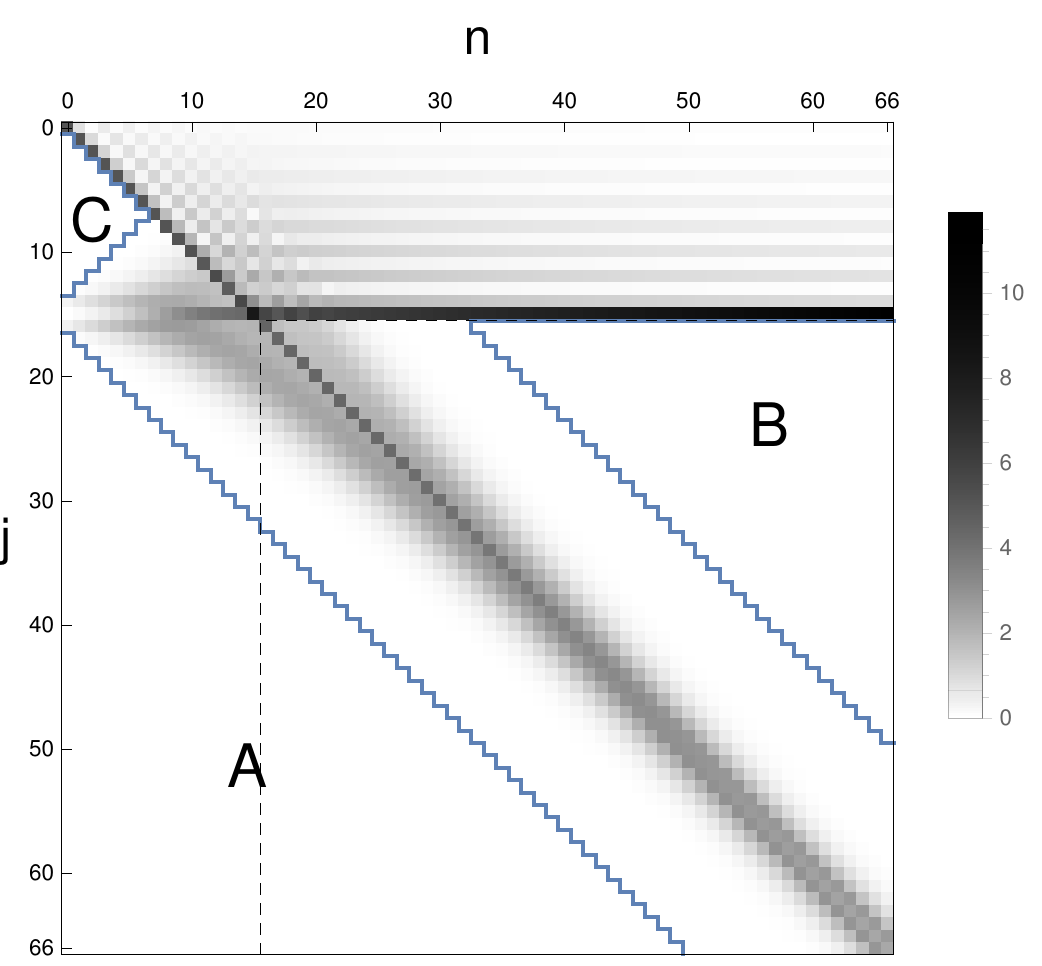}
\caption{The magnitude plot of the symmetric Jacobi coefficients 
$\rho_{j,n}^{15;(2.5, 2.5)}$.}\label{FIG:symjac}
\end{figure}

Figure \ref{FIG:symjac} shows the magnitudes of the coefficients 
$\rho_{j,n}^{m;(\alpha, \alpha)}$ with $\alpha = 2.5$ and $m = 15$. The regions 
A, B, and C with exact zeros are inherited from those of the Jacobi-based 
convolution coefficients. The symmetric coefficients are bordered by the dashed 
lines.



\subsubsection{Gegenbauer case}

Gegenbauer polynomials $C_n^{(\lambda)}$ are symmetric Jacobi 
polynomials with a different normalization:
\begin{equation}\label{Geg to Jac}
C_n^{(\lambda)}(x) = 
\frac{\left(2\lambda\right)_n}{\left(\lambda+\frac{1}{2}\right)_n} 
P_{n}^{(\lambda-\frac{1}{2},\lambda-\frac{1}{2})}(x) ,\ n\geqslant 0, 
\end{equation}
with $\lambda>-\frac{1}{2}$ and $\lambda\neq0$. If we denote by 
$\widetilde{\rho}_{k,n}^{m;(\lambda)}$ the series coefficients of the 
convolution of two Gegenbauer polynomials, that is,
\begin{equation}\label{gen conv Geg}
\int_{-1}^{x+1} C_m^{(\lambda)}(x-t)  C_n^{(\lambda)}(t) \mathrm{d}t = 
\sum_{k=0}^{m+n+1} \widetilde{\rho}_{j,n}^{m;(\lambda)} C_{j}^{(\lambda)}(x+1), 
\end{equation}
the relation \eqref{Geg to Jac} gives
\begin{equation}\label{geg coeff scaling}
\widetilde{\rho}_{j,n}^{m;(\lambda)} = \frac{\left(\lambda + 
\frac{1}{2}\right)_j (2 \lambda 
)_m (2 \lambda)_n}{(2 \lambda )_j \left(\lambda +\frac{1}{2}\right)_m 
\left(\lambda +\frac{1}{2}\right)_n} \rho_{j,n}^{m;(\lambda-1/2;\lambda-1/2)}, 
\end{equation}
where $\rho_{j,n}^{m;(\lambda-1/2;\lambda-1/2)}$ are the coefficients given in 
Corollary \ref{cor RSymJac any j v2}. Combining Corollary \ref{cor RSymJac any 
j v2} and \eqref{geg coeff scaling} leads to the following corollary, the proof 
of which is omitted. 
\begin{corollary}
Let $m$ and $n$ be two positive integers with $n\geqslant m$ and suppose 
$\lambda> -\frac{1}{2}$ to be nonzero. The $\widetilde{\rho}$-coefficients in the expansion 
\eqref{gen conv Geg} can be expressed as
\begin{empheq}{alignat=3}
\widetilde{\rho}_{j,n}^{m;(\lambda)} &= \sum_{\nu=\max(1, |j-n|)}^{m+1} 
\widetilde{\varpi}_{j,\nu}^{m,n} (\lambda) & \quad \text{ for } j\geqslant 
\max(m+1,n-m-1), \nonumber \\
\widetilde{\rho}_{j,n}^{m;(\lambda)} &=0 & \quad \text{ for } m+1 \leqslant j 
\leqslant n-m-2, \text{ if } n \geqslant 2m+3, \nonumber \\
\widetilde{\rho}_{j,n}^{m;(\lambda)} &= \sum_{\nu=1}^{j} 
\widetilde{\varpi}_{j,\nu}^{n,m}(\lambda) + \sum_{\nu=j+1}^{n+1} 
\widetilde{d}_{\nu,j,n}^{m}(\lambda) & \quad \text{ for } 0\leqslant j 
\leqslant m, \nonumber
\end{empheq}
where
\begin{equation*} 
\widetilde{\varpi}_{j,\nu}^{m,n}(\lambda) = \frac{(j+\lambda) (-1)^{m+\nu+1} 
(\lambda)_{\nu-1} (2\lambda + 2\nu - 2)_{m-\nu+1}(-\nu)_{\frac{-j+n+\nu}{2}}}
{2 (m-\nu+1)! \left(\frac{-j+\nu+n}{2}\right)! \left(\lambda+\frac{j+n-\nu}{2} 
\right)_{\nu+1}}
\end{equation*}
when $n+\nu-j$ is even and $\widetilde{\varpi}_{j,\nu}^{m,n}(\lambda) = 0$ 
otherwise. Here, 
\begin{equation*}
\begin{multlined}
\widetilde{d}_{\nu,j,n}^{m}(\lambda) = \frac{2(-1)^{m+\nu+n+1}(j+\lambda)(2 
\lambda+2\nu-1)(2\lambda)_m(j+2\lambda+\nu+1)_{-j+n-2}}{m!(\nu-j)!(-\nu
+n+1)!}\\[2mm]
\times \pFq{4}{3}{1,-m,m+2\lambda,\lambda+\nu+\frac{1}{2}}{\lambda+\frac{1}{2}, 
-j+\nu+1,j+2\lambda+\nu+1}{1}.
\end{multlined}
\end{equation*}
\end{corollary}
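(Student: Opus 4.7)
The plan is to exploit the proportionality \eqref{Geg to Jac} between Gegenbauer and symmetric Jacobi polynomials, which has already been translated into the scaling law \eqref{geg coeff scaling} for the convolution coefficients. Writing $\kappa_n(\lambda) = (2\lambda)_n/(\lambda+1/2)_n$, so that $C_n^{(\lambda)} = \kappa_n(\lambda)\, P_n^{(\lambda-1/2,\lambda-1/2)}$, substitution into \eqref{gen conv Geg} and comparison with \eqref{gen conv Jac} gives $\widetilde{\rho}_{j,n}^{m;(\lambda)} = \mu_{j,m,n}(\lambda)\, \rho_{j,n}^{m;(\lambda-1/2,\lambda-1/2)}$ with $\mu_{j,m,n}(\lambda) = \kappa_m(\lambda)\kappa_n(\lambda)/\kappa_j(\lambda)$. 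Once this is in hand, the corollary reduces to reading off the three cases of Corollary \ref{cor RSymJac any j v2} at $\alpha=\lambda-1/2$ and absorbing the factor $\mu_{j,m,n}(\lambda)$ into each summand.

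The three branches of the statement then transfer one by one. For $j\geqslant \max(m+1,n-m-1)$, we set $\widetilde{\varpi}_{j,\nu}^{m,n}(\lambda) := \mu_{j,m,n}(\lambda)\, \varpi_{j,\nu}^{m,n}(\lambda-1/2,\lambda-1/2)$; for $m+1\leqslant j\leqslant n-m-2$ with $n\geqslant 2m+3$, the vanishing of $\widetilde{\rho}_{j,n}^{m;(\lambda)}$ is immediate because $\mu_{j,m,n}(\lambda)\neq 0$; and for $0\leqslant j\leqslant m$ we correspondingly set $\widetilde{d}_{\nu,j,n}^{m}(\lambda):=\mu_{j,m,n}(\lambda)\, d_{\nu,j,n}^{m}(\lambda-1/2,\lambda-1/2)$. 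The parity condition for $\widetilde{\varpi}$ (zero when $n+\nu-j$ is odd) is inherited directly from \eqref{varpi sym Jac odd}.

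What remains is the purely algebraic task of checking that these scaled expressions coincide with the closed forms displayed in the corollary, and this is where the main obstacle lies. The principal tool is the Legendre duplication formula $(2z)_{2k} = 4^k (z)_k (z+1/2)_k$ together with the elementary shift identity $(z)_{a+b}=(z)_a (z+a)_b$. These allow one to combine the Pochhammer symbols $(j+2\alpha+1)_j$, $(2j+2\alpha-2\nu+1)_{n+\nu-j}$, $(j+\alpha-\nu+1)_{n+\nu-j}$, and $(j+\alpha+3/2)_{(n+\nu-j)/2}$ from $\varpi_{j,\nu}^{m,n}(\alpha,\alpha)$ with the scaling factor $\mu_{j,m,n}(\lambda)$, producing the cleaner grouping $(\lambda)_{\nu-1}(2\lambda+2\nu-2)_{m-\nu+1}/(\lambda+(j+n-\nu)/2)_{\nu+1}$ appearing in $\widetilde{\varpi}$. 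For $\widetilde{d}$, the ${}_4F_3$ kernel is essentially unchanged apart from relabeling $\alpha+1\mapsto \lambda+1/2$ and $2\alpha+1\mapsto 2\lambda$, so the simplification is confined to the prefactor, where an analogous but shorter manipulation yields the stated form. Once the alternating-sign factors $(-1)^{m+\nu+1}$ and $(-1)^{m+n+\nu+1}$ are collected, the two sides match verbatim.
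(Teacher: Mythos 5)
Your proposal is correct and is exactly the route the paper intends: the paper itself states that the corollary follows by combining Corollary \ref{cor RSymJac any j v2} with the scaling relation \eqref{geg coeff scaling} (your $\mu_{j,m,n}(\lambda)=\kappa_m\kappa_n/\kappa_j$ is precisely that factor) and omits the remaining algebra. Your outline of the simplification via the Legendre duplication formula and Pochhammer shift identities fills in the omitted computation in the expected way.
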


\subsubsection{Legendre case}\label{subsec:Leg}
Legendre polynomials $P_n^{(0,0)}(x)$ are the symmetric Jacobi polynomials with 
$\alpha=\beta=0$ or, equivalently, the special case of Gegenbauer polynomials 
with $\lambda=1/2$. We show in the following corollary that the convolution 
coefficients of Legendre polynomials become significantly simpler than those of 
symmetric Jacobi or Gegenbauer.

\begin{corollary}\label{cor Leg any j v2} Let $m$ and $n$ be two positive 
integers with $n \geqslant m$. The coefficients $\rho_{j,n}^{m;(0,0)}$ in 
\eqref{gen conv Jac} can be expressed as
\begin{empheq}{alignat=3}
\rho_{j,n}^{m;(0,0)} &= \sum_{\nu=\max(1, |j-n|)}^{m+1} 
\varpi_{j,\nu}^{m,n}(0,0) & 
\quad \text{ for } j\geqslant \max(m+1,n-m-1), \nonumber \\
\rho_{j,n}^{m;(0,0)} &=0 & \quad \text{ for } m+1 \leqslant j \leqslant 
n-m-2, \text{ if } n \geqslant 2m+3, \nonumber \\
\rho_{j,n}^{m;(0,0)} &= \sum_{\nu=1}^{j} \varpi_{j,\nu}^{n,m}(0,0) + 
\sum_{\nu=j+1}^{n+1} d_{\nu,j,n}^m  & \quad \text{ for } 0\leqslant j 
\leqslant m, \nonumber
\end{empheq}
where
\begin{subequations}
\begin{equation}\label{Leg inner coef}
\varpi_{j,\nu}^{m,n}(0,0) =  \frac{(-1)^{m+\nu+1}(2j+1)(m+\nu-1)!(-\nu 
)_{\frac{n-j+\nu}{2}}}{4^{\nu}(\nu-1)! (m-\nu+1)! 
\left(\frac{n-j+\nu}{2} \right)! \left(\frac{n+j-\nu+1}{2}\right)_{\nu+1}}
\end{equation}
for even $n+\nu-j$ and $\varpi_{j,\nu}^{m,n}(0,0) = 0$ otherwise. Here, 
\begin{equation}\label{Leg d}
{d}_{\nu,j,n}^{m}(0,0) = \frac{\sqrt{\pi}(2j+1)\nu 2^{j+m-\nu}(-1)^{m+\nu 
+n+1} (n+\nu-1)! \left(\frac{-j-m+\nu+1}{2} \right)_{j+m} \left(\frac{j-m+\nu 
+2}{2} \right)_m} {(n-\nu+1)!(j+m+\nu)! \Gamma \left(\frac{-j+m+\nu+2}{2} 
\right) \Gamma \left(\frac{j+m+\nu+3}{2} \right)}.
\end{equation}
\end{subequations}
\end{corollary}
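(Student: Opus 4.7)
The plan is to obtain Corollary \ref{cor Leg any j v2} as a specialization of Corollary \ref{cor RSymJac any j v2} at $\alpha = 0$, since Legendre polynomials are symmetric Jacobi polynomials with that parameter choice. The zero region \eqref{Jac Rjn j geq m} is inherited verbatim; the sum ranges match; it remains to simplify the ingredients $\varpi_{j,\nu}^{m,n}(0,0)$ and $d_{\nu,j,n}^m(0,0)$ to the forms \eqref{Leg inner coef} and \eqref{Leg d}.

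For \eqref{Leg inner coef}, I would substitute $\alpha = 0$ into \eqref{varpi sym Jac even}. The Pochhammer symbols that appear collapse to factorials via standard identities: $(\nu)_{m-\nu+1} = m!/(\nu-1)!$, $(m+1)_{\nu-1}=(m+\nu-1)!/m!$, $(n+1)_{j-\nu} = (n+j-\nu)!/n!$, $(j+1)_j = (2j)!/j!$, and $(j-\nu+1)_{n+\nu-j} = n!/(j-\nu)!$. The remaining half-integer factors combine through the Legendre duplication identity $(2a)_{2k} = 4^k (a)_k (a+\tfrac{1}{2})_k$, applied with $a = j-\nu+\tfrac{1}{2}$ and $k = (n+\nu-j)/2$, to eliminate $\left(j-\nu+\tfrac{1}{2}\right)_k$, $(j+\tfrac{3}{2})_k$ and $(2j-2\nu+1)_{n+\nu-j}$ in favour of integer factorials and a single power of $4$. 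The parity condition $n+\nu-j$ even is inherited from \eqref{varpi sym Jac odd}.

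For \eqref{Leg d}, I would set $\alpha=0$ in \eqref{JacSym d}. The lower parameter $\alpha+1 = 1$ cancels the upper parameter $1$, reducing the ${}_4F_3$ to
\[
\pFq{3}{2}{-m,\,m+1,\,\nu+1}{\nu-j+1,\,j+\nu+2}{1}.
\]
The decisive observation is that this terminating series is in Whipple's well-poised form: the upper parameters $-m$ and $m+1$ sum to $1$, and with $c=\nu+1$ the two lower parameters $\nu-j+1$ and $j+\nu+2$ sum to $2c+1$. Whipple's summation theorem then evaluates it explicitly as a ratio involving $\pi\,2^{-2\nu-1}\,\Gamma(\nu-j+1)\,\Gamma(j+\nu+2)$ divided by a product of four Gamma functions at the half-integer arguments $\tfrac{m-j+\nu+2}{2}$, $\tfrac{-j-m+\nu+1}{2}$, $\tfrac{j-m+\nu+2}{2}$, $\tfrac{j+m+\nu+3}{2}$. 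Multiplying through by the (elementary) $\alpha=0$ reduction of the prefactor in \eqref{JacSym d} and applying the Legendre duplication formula $\Gamma(\tfrac{j+m+\nu+1}{2})\Gamma(\tfrac{j+m+\nu+2}{2}) = \sqrt{\pi}\,(j+m+\nu)!/2^{j+m+\nu}$ produces the $\sqrt{\pi}$, the $2^{j+m-\nu}$ and the $(j+m+\nu)!$ seen in \eqref{Leg d}; the remaining Gamma/Pochhammer factors reorganise directly into $\left(\tfrac{-j-m+\nu+1}{2}\right)_{j+m}$ and $\left(\tfrac{j-m+\nu+2}{2}\right)_m$.

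The main obstacle is identifying Whipple's summation as the relevant closed-form evaluation and then carefully bookkeeping the Gamma functions so that the half-integer arguments line up with the Pochhammer symbols written in \eqref{Leg d}; in particular, sign tracking through the reflection and duplication identities, and verifying that the parity/nonvanishing hypotheses inherent to Whipple's theorem are met for all admissible $\nu,j,m,n$, will be the delicate part of the calculation.
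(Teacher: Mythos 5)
Your proposal is correct and follows essentially the same route as the paper: specialize Corollary \ref{cor RSymJac any j v2} at $\alpha=0$, simplify \eqref{varpi sym Jac even} via the Legendre duplication formula to get \eqref{Leg inner coef}, and for \eqref{Leg d} reduce the ${}_4F_3$ in \eqref{JacSym d} to the ${}_3F_2$ with parameters $-m,\ m+1,\ \nu+1$ over $\nu-j+1,\ j+\nu+2$, which is then evaluated in closed form by Whipple's sum \cite[Eq.~(16.4.7)]{DLMF} and tidied with the duplication formula. This is exactly the paper's argument, including the identification of Whipple's summation as the key evaluation.
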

\begin{proof} Since $P_n(x) = P_n^{(0,0)}(x)$, \eqref{Leg inner coef} 
can be obtained by setting $\alpha=0$ in \eqref{varpi sym Jac even} and 
simplifying using the Legendre duplication formula.

Setting $\alpha=0$ in \eqref{JacSym d}, we have
\begin{equation*}
{d}_{\nu,j,n}^m(0,0) = \frac{2(-1)^{\nu+m+n+1}v(2j+1) 
(n+\nu-1)!}{(n-\nu+1)! 
(\nu-j)! (j+\nu+1)!}\pFq{3}{2}{-m,m+1,\nu+1}{-j+\nu+1,j+\nu+2}{1},
\end{equation*}
where the generalized hypergeometric series ${}_3F_2$ can be represented in 
terms of Gamma functions using the Whipple's sum \cite[Eq.~(16.4.7)]{DLMF}. 
Further algebraic simplification using the Legendre duplication formula gives 
\eqref{Leg d}.
\end{proof}

The following theorem shows that the symmetry property \eqref{symP} holds for 
all $j,n \geqslant 0$ in the Legendre case, which is difficult to see directly 
from Theorem \ref{cor Leg any j v2}. Our proof employs the fact that the 
Legendre polynomials are $L^2$ orthogonal on $[-1, 1]$.

\begin{theorem} For any $m,n,j \geqslant 0$, the coefficients 
$\rho_{j,n}^{m;(0,0)}$ in \eqref{gen conv Jac} satisfy
\begin{equation}\label{sym Leg rho}
\rho_{j,n}^{m;(0,0)} = (-1)^{n+j}\frac{2j+1}{2n+1} \rho_{n,j}^{m;(0,0)}.
\end{equation}
\end{theorem}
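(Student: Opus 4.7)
The plan is to use the $L^2$ orthogonality of $\{P_n\}_{n\geqslant 0}$ on $[-1,1]$ (noting $\|P_j\|^2 = 2/(2j+1)$) to express each $\rho$-coefficient as an explicit integral, and then to exhibit the desired symmetry by a sign-flipping change of variables that swaps the roles of $j$ and $n$.

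Setting $y = x+1 \in [-1,1]$ in \eqref{gen conv Jac} for the Legendre case ($\alpha=\beta=0$) and multiplying by $P_j(y)$, integrating over $[-1,1]$ and using orthogonality gives
\begin{equation*}
\rho_{j,n}^{m;(0,0)} = \frac{2j+1}{2} \, I(j,n), \qquad
I(j,n) := \int_{-1}^{1}\!\!\int_{-1}^{y} P_j(y)\, P_m(y-1-t)\, P_n(t)\, \mathrm{d}t \,\mathrm{d}y.
\end{equation*}
The key step is to perform the substitution $y = -v$, $t = -u$ in $I(j,n)$. The region $\{-1 \leqslant t \leqslant y \leqslant 1\}$ is mapped bijectively onto $\{-1 \leqslant v \leqslant u \leqslant 1\}$, the Jacobian is unity, and the argument $y-1-t$ becomes $u-1-v$. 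Using $P_k(-z) = (-1)^k P_k(z)$ for $k = j, n$ (but \emph{not} for $P_m$, whose argument keeps the required form), and relabelling $u \to y'$, $v \to t'$, one obtains
\begin{equation*}
I(j,n) = (-1)^{j+n} \int_{-1}^{1}\!\!\int_{-1}^{y'} P_n(y')\, P_m(y'-1-t')\, P_j(t')\, \mathrm{d}t'\,\mathrm{d}y' = (-1)^{j+n} I(n,j).
\end{equation*}

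Substituting this identity back into the two expressions
\begin{equation*}
\rho_{j,n}^{m;(0,0)} = \tfrac{2j+1}{2}\, I(j,n), \qquad \rho_{n,j}^{m;(0,0)} = \tfrac{2n+1}{2}\, I(n,j),
\end{equation*}
immediately yields \eqref{sym Leg rho}. I do not anticipate any real obstacle; the only point requiring care is checking that the parity factor $P_m(-\,\cdot\,)$ does \emph{not} appear (since the arithmetic of the substitution on the argument $y-1-t$ produces $u-1-v$ directly, with no sign change on $P_m$), so that the sign collected is exactly $(-1)^{j+n}$ and no unwanted $(-1)^m$ enters.
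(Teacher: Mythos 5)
Your proposal is correct and follows essentially the same route as the paper: both express $\rho_{j,n}^{m;(0,0)}$ via $L^2$ projection against $P_j$ and then apply the sign-flipping substitution (the paper first swaps the order of integration and then sets $t=-T$, $y=-Y$, which is exactly your single two-dimensional change of variables). The verification that the argument of $P_m$ transforms to the right form, so that only the factor $(-1)^{j+n}$ appears, matches the paper's computation.
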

\begin{proof} 
As indicated in \eqref{conv1}, $x \in [-2, 0]$ in \eqref{gen conv Jac}. By 
letting $y = x+1$, we have
\begin{equation*}\label{gen conv leg}
\int_{-1}^y P_m^{(0,0)}(y-t-1) P_n^{(0,0)}(t) \mathrm{d}t = 
\sum_{k=0}^{m+n+1} \rho_{k,n}^{m;(0,0)} P_k^{(0,0)}(y),
\end{equation*}
where $y \in [-1, 1]$. Since the Legendre polynomials are $L^2$ orthogonal, 
i.e. for $j, n \geqslant 0$,
\begin{equation*}
\int_{-1}^1 P^{(0,0)}_j(y) P^{(0,0)}_k(y) \mathrm{d}y = \frac{2}{2j+1} 
\delta_{j,k},
\end{equation*}
we have
\begin{equation}\label{projection}
\int_{-1}^1 P^{(0,0)}_j(y) \int_{-1}^y P^{(0,0)}_{m}(y-t-1)P^{(0,0)}_n(t) 
\mathrm{d}t \mathrm{d}y = \frac{2}{2j+1}\rho_{j,n}^{m;(0,0)}
\end{equation}
for $0 \leqslant j \leqslant m+n+1$.

Now, we swap the order of integration to have
\begin{equation*}
\int_{-1}^1 P^{(0,0)}_j(y) \int_{-1}^y P^{(0,0)}_{m}(y-t-1)P^{(0,0)}_n(t) 
\mathrm{d}t \mathrm{d}y =
\int_{-1}^1 P^{(0,0)}_n(t) \int_{t}^1 P^{(0,0)}_j(y) P^{(0,0)}_{m}(y-t-1) 
\mathrm{d}y \mathrm{d}t,
\end{equation*}
where $t \in [-1, 1]$. Applying the changes of variables $t = -T$ and 
$y=-Y$ gives
\begin{equation}\label{swap integration}
\begin{multlined}
\int_{-1}^1 P^{(0,0)}_j(y) \int_{-1}^y P^{(0,0)}_{m}(y-t-1)P^{(0,0)}_n(t) 
\mathrm{d}t \mathrm{d}y \\
\hspace{4cm} = (-1)^{n+j} \int_{-1}^1 P^{(0,0)}_n(T) \int_{-1}^{T} 
P^{(0,0)}_j(Y) P^{(0,0)}_{m}(T-Y-1) \mathrm{d}Y \mathrm{d}T,
\end{multlined}
\end{equation}
where we have used $P_n(y)=(-1)^n P_n(-y)$. 

Recognizing the inner integral in \eqref{swap integration} as the convolution of 
$P_j(T)$ and $P_m(T)$ with $T \in [-1, 1]$, we can replace it by its series 
representation 
\begin{equation}\label{gen conv leg 2}
\begin{multlined}
\int_{-1}^1 P^{(0,0)}_j(y) \int_{-1}^y P^{(0,0)}_{m}(y-t-1)P^{(0,0)}_n(t) 
\mathrm{d}t \mathrm{d}y \\ 
\hspace{2cm} = (-1)^{n+j} \sum_{k=0}^{j+m+1} \rho_{k,j}^{m;(0,0)} \int_{-1}^1 
P^{(0,0)}_n(T) P^{(0,0)}_k(T) \mathrm{d}T = (-1)^{n+j}\frac{2}{2n+1} 
\rho_{n,j}^{m;(0,0)},
\end{multlined}
\end{equation}
where the second equality follows from the orthogonality. Finally, 
\eqref{projection} and \eqref{gen conv leg 2} leads to \eqref{sym Leg rho}.
\end{proof}

\begin{figure}[ht]
\centering
\includegraphics[scale=0.6]{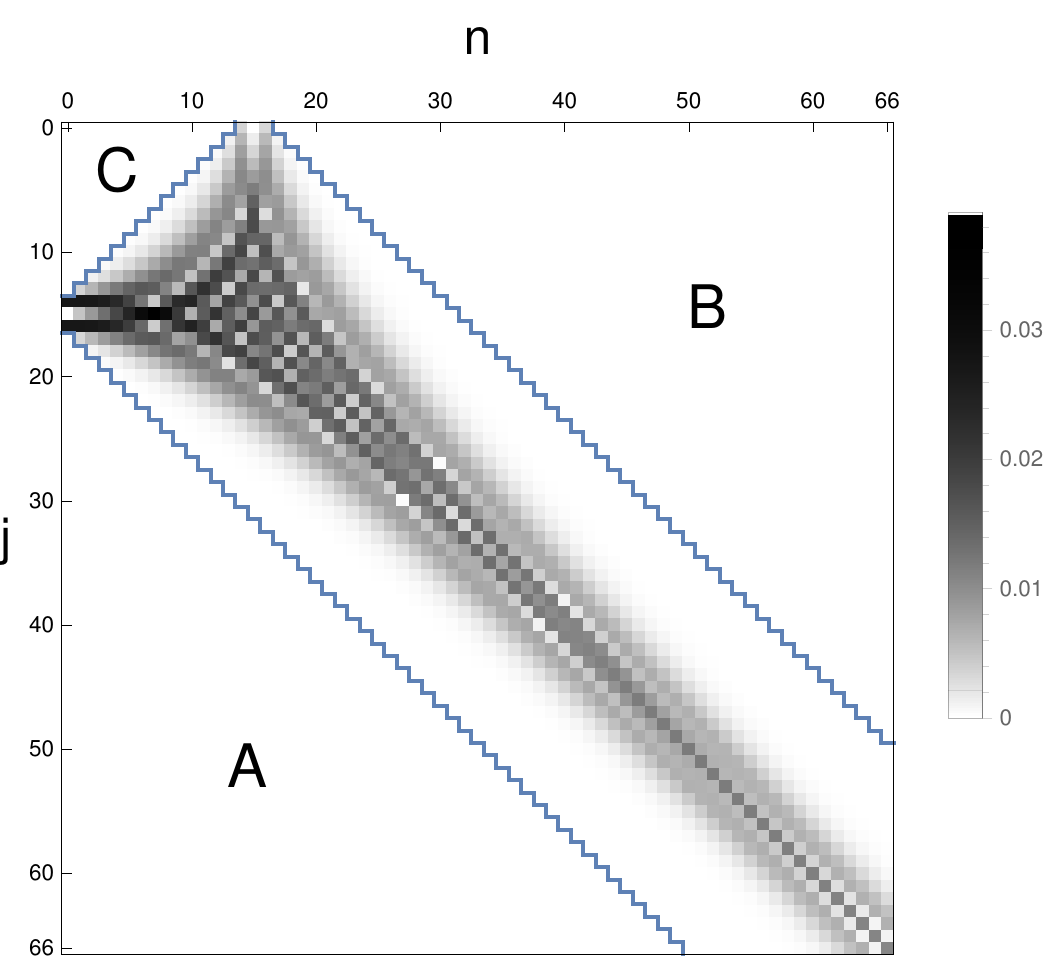}
\caption{The magnitude plot of the Legendre coefficients 
$\rho_{j,n}^{15;(0,0)}$.}\label{FIG:leg}
\end{figure}

The Legendre-based symmetry property \eqref{sym Leg rho} suggests extended 
regions of exact zeros in the magnitude plot, as seen in Figure \ref{FIG:leg}. 
Now, the non-zero coefficients are confined in a tilted rectangle-shaped band 
surrounded by zeros in Regions A, B, and C.

\subsubsection{Chebyshev case}

Chebyshev polynomials of second kind $T_n(x)$ are symmetric Jacobi polynomials 
with $\alpha = \beta = -1/2$ and a different normalization: 
\begin{equation}\label{Cheb Jac}
P_n^{(-1/2,-1/2)}(x) = \frac{\left(1/2\right)_n}{n!} 
T_n(x), \ n\geqslant 0.
\end{equation} 

\begin{corollary}\label{cor Cheb any j}  Let $m$ and $n$ be two positive 
integers with $n\geqslant m$. 
The coefficients $\widetilde{\rho}_{k,n}^m$ in the expansion 
\begin{equation*}\label{conv Cheb}
\int_{-1}^{x+1} T_m(x-t)T_n(t) \mathrm{d}t = \sum_{k=0}^{m+n+1} 
\widetilde{\rho}_{k,n}^{m} T_{k}(x+1), 
\end{equation*} 
can be expressed as 
\begin{subequations}
\begin{empheq}{alignat=3}
\label{Cheb Rjn j geq m}
\widetilde{\rho}_{j,n}^{m} &= \sum_{\nu=\max(1, |j-n|)}^{m+1}
\widetilde{\varpi}_{j,\nu}^{m,n}  & \quad \text{ for } j\geqslant 
\max(m+1,n-m-1), \\
\label{Cheb Rjn j geq m2}
\widetilde{\rho}_{j,n}^{m} &=0 & \quad \text{ for } m+1 \leqslant j \leqslant 
n-m-2, \text{ if } n \geqslant 2m+3, \\
\label{Cheb Rjn j leq m} 
\widetilde{\rho}_{j,n}^{m}&= \sum_{\nu=1}^{j} \widetilde{\varpi}_{j,\nu}^{n,m} 
+ \sum_{\nu=j+1}^{n+1} \widetilde{d}_{\nu,j,n}^{m} & \quad \text{ for } 0 
\leqslant j \leqslant m,
\end{empheq}
where
\begin{equation}\label{Tche inner coef}
\widetilde{\varpi}_{j,\nu}^{m,n} = \frac{(-1)^{m}\, 2^{1-2\nu} n (-m)_{\nu-1} 
(m)_{\nu-1} (-\nu)_{\frac{n+\nu-j}{2}} \left(\frac{n+\nu-j+2}{2}\right)_{j-\nu -1
}}{\left(1/2\right)_{\nu-1} \left(\frac{n+\nu+j}{2} \right)!}
\end{equation}
for even $(n+\nu-j)$ and $\widetilde{\varpi}_{j,\nu}^{m,n} = 0$ otherwise. Here,
\begin{equation}\label{Tche d}
\widetilde{d}_{\nu,j,n}^{\ m} = \frac{2^{2-\delta _{0,j}} (-1)^{m+n} (-n)_{\nu 
-1} (n)_{\nu-1}\left(\nu-1/2\right)}{(\nu-j)!(j+\nu)!}
\pFq{4}{3}{1,-m,m,\nu +1/2}{1/2,-j+\nu+1,j+\nu+1}{1}.
\end{equation}
\end{subequations}
\end{corollary}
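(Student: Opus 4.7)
The strategy is to parallel the Gegenbauer argument, using the normalisation \eqref{Cheb Jac} together with the symmetric-Jacobi formulas of Corollary \ref{cor RSymJac any j v2} specialised at $\alpha=\beta=-1/2$. The zero range \eqref{Cheb Rjn j geq m2} follows immediately from \eqref{SymJac Rjn j geq m2}, so only the expressions for $\widetilde{\varpi}_{j,\nu}^{m,n}$ and $\widetilde{d}_{\nu,j,n}^{m}$ in \eqref{Tche inner coef}--\eqref{Tche d} need verification.

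Substituting \eqref{Cheb Jac} into both sides of \eqref{gen conv Jac} (with $\alpha=\beta=-1/2$) and matching the coefficients of $T_j(x+1)$ gives the scaling
\[
\widetilde{\rho}_{j,n}^{m} = \frac{m!\, n!\, (1/2)_j}{j!\, (1/2)_m\, (1/2)_n}\, \rho_{j,n}^{m;(-1/2,-1/2)}.
\]
Inserting \eqref{varpi sym Jac even} into this relation, many Pochhammer factors collapse under the identity $(1/2)_j (j+1/2)_{m-j} = (1/2)_m$ and repeated use of the Legendre duplication formula to merge the half-integer shifts (for instance $(j-\nu+1/2)_{n+\nu-j}$ and $(j+1)_{(n+\nu-j)/2}$). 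The parity constraint that $n+\nu-j$ be even is inherited directly from \eqref{varpi sym Jac odd}, and the result simplifies to the closed form \eqref{Tche inner coef}.

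The same procedure applied to \eqref{JacSym d} produces the ${}_4F_3$ appearing in \eqref{Tche d} immediately (its hypergeometric prefactor reduces by a straightforward Pochhammer calculation), except for one subtlety: the factor $(j+2\alpha+1)_{\nu+1}$ in the denominator becomes $(j)_{\nu+1}$ at $\alpha=-1/2$, which vanishes when $j=0$, while the numerator factor $(2\alpha+2j+1)=2j$ vanishes there as well. The hardest part of the proof is resolving this $0/0$ correctly. For $j\geqslant 1$ one rewrites $(2j)/(j)_{\nu+1} = 2/(j+1)_\nu$ and combines this with $j!(j+1)_\nu = (j+\nu)!$ to reach \eqref{Tche d} with prefactor $4 = 2^{2-0}$. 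For $j=0$, however, the ratio must be evaluated by letting $\alpha\to -1/2$ with $j$ held at $0$, which gives
\[
\lim_{\alpha\to -1/2} \frac{2\alpha+1}{(2\alpha+1)_{\nu+1}} = \frac{1}{\nu!},
\]
exactly half of the value obtained by letting $j\to 0$ first. This missing factor of two is precisely what the exponent $2^{2-\delta_{0,j}}$ in \eqref{Tche d} records, and once it is accounted for the remaining algebra is routine.
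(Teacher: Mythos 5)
Your proposal is correct and follows essentially the same route as the paper: rescale via \eqref{Cheb Jac} to get $\widetilde{\rho}_{j,n}^m$ from $\rho_{j,n}^{m;(-1/2,-1/2)}$, specialise \eqref{varpi sym Jac even} and \eqref{JacSym d} at $\alpha=-1/2$, and resolve the removable $0/0$ at $j=0$ by the limit $\alpha\to-1/2$, which is exactly how the paper produces the $2^{2-\delta_{0,j}}$ factor. Your explicit identification of the ratio $\lim_{\alpha\to-1/2}(2\alpha+1)/(2\alpha+1)_{\nu+1}=1/\nu!$ as half of the $j\to 0$ value of the generic formula matches the paper's comparison of \eqref{cheb d j geq 1} with \eqref{cheb d j eq 0}.
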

\begin{proof} 
Equation \eqref{Cheb Jac} suggests 
\begin{equation}\label{Rcheb to RJacsym}
\widetilde{\rho}_{j,n}^m = \frac{m! n!(1/2)_j}{(1/2)_m(1/2)_n j!}
\rho_{j,n}^{m;(-1/2,-1/2)},
\end{equation}
which leads to \eqref{Cheb Rjn j geq m}-\eqref{Cheb Rjn j leq m} with 
\begin{equation*}
\widetilde{\varpi}_{j,\nu}^{m,n} = \frac{m! n! (1/2)_j} {(1/2)_m(1/2)_n j!} 
\varpi_{j,\nu}^{m,n}(-1/2,-1/2).
\end{equation*}
Setting $\alpha = -1/2$ in \eqref{varpi sym Jac even} and simplifying give 
\eqref{Tche inner coef}.

Similarly, \eqref{Rcheb to RJacsym} implies
\begin{equation}\label{cheb d}
\widetilde{d}_{\nu,j,n}^{\ m} = \frac{m! n! (1/2)_j}{(1/2)_m(1/2)_n j!} 
d_{\nu,j,n}^{m}(-1/2, -1/2), 
\end{equation}
where
\begin{equation}\label{cheb d j geq 1}
\begin{multlined}
d_{\nu,j,n}^{m}(-1/2, -1/2) = \frac{4(-1)^{m+n+1+\nu} (-1/2 + \nu)(j+1/2)_{m-j} 
(1/2)_n (n)_{\nu-1}}{m!(n+1-\nu)!(\nu-j)!(j+1)_{\nu}}\\[2mm]
\times \pFq{4}{3}{1,-m, 1/2 +\nu ,m}{\nu-j+1, 1/2,j+\nu +1}{1}
\end{multlined}
\end{equation}
for $j\geqslant 1$. When $j=0$, \eqref{JacSym d} gives
\begin{align}
d_{\nu,0,n}^{m}(-1/2,-1/2) =& \lim_{\alpha\to -1/2} d_{\nu,0,n}^{m}(\alpha,
\alpha) \nonumber \\
=& \frac{2(-1)^{m+n+1+\nu} (\nu-1/2)(1/2)_{m} (1/2)_n (n)_{\nu-1}}{m!(n+1-\nu)! 
(\nu!)^2}\pFq{4}{3}{1,-m, 1/2 +\nu ,m }{\nu+1, 1/2 ,\nu +1}{1} \label{cheb d j 
eq 0}.
\end{align}
By combining \eqref{cheb d j geq 1} and \eqref{cheb d j eq 0}, we have
\[
\begin{multlined}
d_{\nu,j,n}^{m}(-1/2, -1/2) = \frac{2^{2-\delta_{0,j} }(-1)^{m+n+1+\nu} (-1/2 
+\nu)(j+1/2)_{m-j} (1/2)_n (n)_{\nu-1}}{m!(n+1-\nu)!(\nu-j)!(j+1)_{\nu}}\\[2mm]
\times \pFq{4}{3}{1,-m, 1/2 +\nu ,m}{\nu-j+1, 1/2,j+\nu +1}{1},
\end{multlined}
\]
which, along with \eqref{cheb d}, yields \eqref{Tche d}.
\end{proof}

\begin{figure}[ht]
\centering
\includegraphics[scale=0.6]{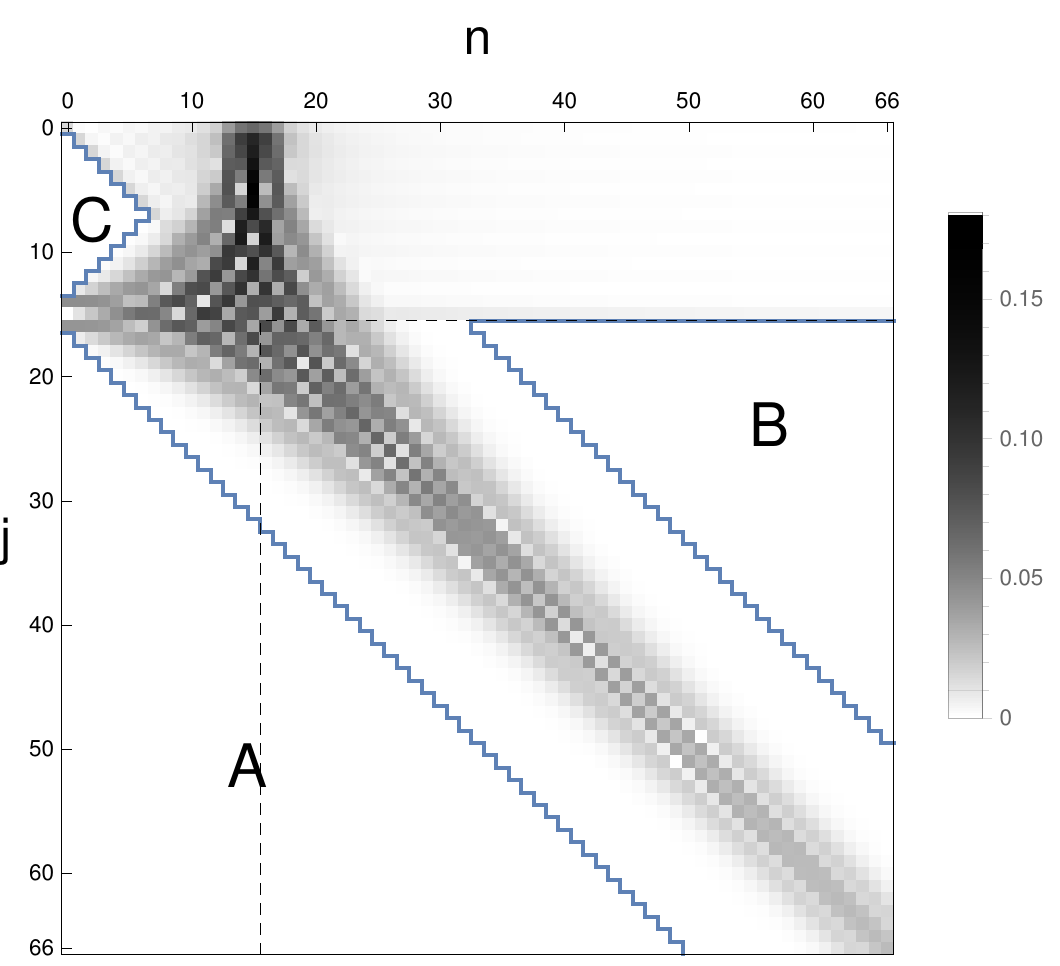}
\caption{The magnitude plot of the Chebyshev coefficients 
$\widetilde{\rho}_{j,n}^{15}$.}\label{FIG:cheb}
\end{figure}

Figure \ref{FIG:cheb} shows the magnitudes of the coefficients $\rho_{j,n}^{m}$ 
for $m = 15$ and the region A, B, and C where $\rho_{j,n}^{m}$ are exactly 
zero. The coefficients that satisfy the symmetry are encompassed by the dashed 
lines. 

\section{Laguerre case}\label{sec: Lag case}
Laguerre polynomials parameterized by $\alpha$ with $\Re(\alpha)>-1$ 
can be defined in terms of terminating hypergeometric functions with the 
following commonly-used normalization
\begin{equation*}\label{Lag 1F1}
L_n^{(\alpha)}(x) = \frac{(1+\alpha)_n}{n!} \pFq{1}{1}{-n}{\alpha+1}{x},~~~~~~ 
n\geqslant 0,
\end{equation*}
and they satisfy the orthogonality relations 
\begin{equation*}
\int_{0}^{+\infty} x^{\alpha} \text{e}^{-x} L_k^{(\alpha)}(x) L_n^{(\alpha)}(x) 
\md x = \frac{\Gamma(\alpha+n+1)}{n!} \delta_{n,k}.
\end{equation*}

Laguerre polynomials with different parameters are linearly related via 
\cite[(3.46)]{AsFi}
\begin{equation}\label{Ln alpha sum Ln beta}
L_{n}^{(\alpha)}(x) = \sum_{k=0}^n \frac{(\alpha-\beta)_{n-k}}{(n-k)!} 
L_{k}^{(\beta)}(x),
\end{equation}
and the $p$-th derivatives of $L_{n+p}^{(\alpha)}$ equals to $L_n^{(\alpha+p)}$ 
up to a sign:
\begin{equation}\label{Ln alpha deriv}
\frac{\md^p}{\md x^p}L_{n+p}^{(\alpha)}(x) = (-1)^p L_{n}^{(\alpha+p)}(x).
\end{equation}
We combine \eqref{Ln alpha sum Ln beta} and \eqref{Ln alpha deriv} to have 
\begin{equation}\label{cc DpLag}
\frac{\md^p}{\md x^p}L_{n+p}^{(\alpha)}(x) = 
\sum_{k=0}^n\gamma_{n,k}^{(p,q)}(\alpha;L)\frac{\md^q}{\md 
x^q}L_{k+q}^{(\alpha)}(x),~~~\text{ where }~~~\gamma_{n,k}^{(p,q)}(\alpha;L) 
= \frac{(-1)^{p+q} (p-q)_{n-k}}{(n-k)!}.
\end{equation}

The Laguerre representation of monomials can be found in, for example, 
\cite[p.207]{Rainville}:
\begin{equation}\label{xn to Ln}
x^n = \sum_{k=0}^n b_{n,k}(\alpha) L_k^{(\alpha)}(x), ~~~\text{ where }~~~
b_{n,k}{(\alpha)} = \frac{(-n)_k \Gamma (n+\alpha +1)}{\Gamma (k+\alpha +1)}
= (-1 )^k n! \dL{k}{n}.
\end{equation}

It also follows from \eqref{Ln alpha deriv} the value of $L_n^{(\alpha)}(x)$ or 
that of its derivatives at $x=0$:
\begin{equation}\label{dpL at minus1}
\dL{p}{n} = (-1)^p \frac{(1+\alpha+p)_{n-p}}{(n-p)!}.
\end{equation}

The Chu-Vandermonde's identity used repeatedly in the rest of this section:
\begin{equation}\label{Lag prod}
\frac{(\alpha+\beta+2)_n}{n!} = \sum_{k=0}^n 
\frac{(\alpha+1)_k}{k!}\frac{(\beta+1)_{n-k}}{(n-k)!}, 
\end{equation}
is valid for any complex numbers $\alpha$ and $\beta$. Observe that for 
$\alpha,\beta>-1$, it corresponds to \eqref{Ln alpha sum Ln beta} evaluated at 
$x=0$. 

For the case of $\alpha=0$, convolution of Laguerre polynomials $L_n^{(0)}(x)$ 
and $L_m^{(0)}(x)$ is sparse in the sense that only two coefficients of its 
$L_n^{(0)}$-series representation are nonzero:
\begin{equation}\label{conv Lag 0}
\int_{0}^{x} L_m^{(0)}(x-t)  L_n^{(0)}(t) \mathrm{d}t = -L_{m+n+1}^{(0)}(x) + 
L_{m+n}^{(0)}(x).
\end{equation}
This well-known result can be found in, for example, \cite[Eq. (18.17.2)]{DLMF} 
or \cite[Eq. (7.411.4)]{GR}. There does not seem to be any similar formula 
addressing the cases with other values of $\alpha$ and this is what we present 
in Theorem \ref{thm Lag any j v2} below.

\begin{theorem}\label{thm Lag any j v2}
Let $m$ and $n$ be two positive integers with $n\geqslant m$ and $\alpha$ be a 
complex number with $\Re(\alpha)>-1$. When $n\geqslant m+1$, the 
$\widehat{\rho}$-coefficients in the expansion
\begin{equation*}\label{gen conv Lag}
\int_{0}^{x} L_m^{(\alpha)}(x-t)  L_n^{(\alpha)}(t) \mathrm{d}t = 
\sum_{j=0}^{m+n+1} \widehat{\rho}_{j,n}^{m;(\alpha)} L_j^{(\alpha)}(x)
\end{equation*}
are given by
\begin{subequations}\label{n geq m+1}
\begin{empheq}[left={ \widehat{\rho}_{j,n}^{m;(\alpha)}=\empheqlbrace\,}]{alignat=2}
& -\frac{(\alpha-1)_{m+n+1-j}}{ (m+n+1-j)!} \hspace{2cm}& \text{for } 
n+1\leqslant j\leqslant m+n+1, \label{Lag Rjn j geq n+1} \\
& \frac{(\alpha)_m}{m!} &\text{for } j=n, \label{Lag Rjn j eq n}\\
& \ 0 & \text{for } m+1\leqslant j\leqslant n-1 \hspace{5mm}\mbox{ (when } n 
\geqslant m+2 \mbox{)}, \label{Lag Rjn m+1 leq j leq n-1}\\
& \frac{(\alpha)_{n+1}}{(n+1)!} & \text{for }j=m, \label{Lag Rjn j eq m}\\
& \frac{(\alpha-1)_{m+n+1-j}}{(m+n+1-j)!} & \text{for } 0\leqslant j\leqslant 
m-1. \label{Lag Rjn j leq m}
\end{empheq}
\end{subequations}
In case of $n = m$, the $\widehat{\rho}$-coefficients become
\begin{subequations}\label{n eq m}
\begin{empheq}[left={ 
\widehat{\rho}_{j,n}^{m;(\alpha)}=\empheqlbrace\,}]{alignat=2}
& -\frac{(\alpha-1)_{2m+1-j}}{(2m+1-j)!} \hspace{3cm}& \text{for } m+1\leqslant 
j\leqslant 2m+1, \label{Lag Rjn j geq n+1 2} \\
& \frac{(\alpha)_{m+1}}{(m+1)!} + \frac{(\alpha)_m}{m!} & \text{for } j=m, 
\label{Lag Rjn j eq n 2}\\
& \frac{(\alpha-1)_{2m+1-j}}{(2m+1-j)!} & \text{for }0\leqslant j\leqslant m-1. 
\label{Lag Rjn j leq m 2}
\end{empheq}
\end{subequations}
\end{theorem}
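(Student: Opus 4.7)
The plan is to apply Theorem~\ref{thm: rho k geq m} with $a=0$ and then substitute the explicit Laguerre expressions \eqref{cc DpLag}, \eqref{xn to Ln}, and \eqref{dpL at minus1} for the $\gamma$- and $b$-coefficients and for the derivatives at zero. Because every building block for Laguerre is elementary in Pochhammer symbols, the sums that arise collapse cleanly, either by termwise vanishing or through two classical summations: Vandermonde's formula ${}_2F_1(-N,b;c;1)=(c-b)_N/(c)_N$ and the hockey-stick identity $\sum_{k=0}^{n}(a)_k/k!=(a+1)_n/n!$, both of which are consequences of \eqref{Lag prod}.

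For the range $m+1\leqslant j\leqslant m+n+1$ I would work from \eqref{rho jn v2}. After substitution, the summand acquires the factor $(-\nu)_{n-j+\nu}/(n-j+\nu)!$ which vanishes whenever $n-j+\nu>\nu$, i.e.\ whenever $j<n$; this gives the zero block \eqref{Lag Rjn m+1 leq j leq n-1}. For $j=n$ only $(-\nu)_{\nu}=(-1)^\nu\nu!$ survives, and after the index change $\mu=m-\nu+1$ together with $(\alpha+m-\mu+1)_\mu=(-1)^\mu(-\alpha-m)_\mu$ the sum reduces to $(-1)^m\sum_{\mu=0}^{m}(-\alpha-m)_\mu/\mu!$, which the hockey-stick identity collapses to $(\alpha)_m/m!$, proving \eqref{Lag Rjn j eq n}. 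For $j\geqslant n+1$, shifting $\nu=j-n+k$ recasts \eqref{rho jn v2} as
\[
\widehat{\rho}_{j,n}^{m;(\alpha)} = -\frac{(\alpha+j-n)_{m+n+1-j}}{(m+n+1-j)!}\,{}_2F_1\!\left(-(m+n+1-j),\; j-n+1;\; \alpha+j-n;\; 1\right),
\]
and Vandermonde's formula reduces this to \eqref{Lag Rjn j geq n+1}.

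For $0\leqslant j\leqslant m$ I would use \eqref{rho jn v3}. In the first single sum, the factor $(-\nu)_{m-j+\nu}$ inside $\gamma_{m-j+\nu,0}^{(j-\nu,j)}(\alpha;L)$ vanishes unless $j=m$, so this sum contributes only on the diagonal. For the second double sum, the identity $b_{k+\nu,j}(\alpha)/(k+\nu)! = (-1)^{j}(1+\alpha+j)_{k+\nu-j}/(k+\nu-j)!$ obtained from \eqref{xn to Ln}, combined with the splittings $(1+\alpha+j)_{k+\nu-j}=(1+\alpha+j)_{\nu-j}(1+\alpha+\nu)_k$ and $(1+\alpha+k)_{m-k}=(1+\alpha)_m/(1+\alpha)_k$, turns the inner $k$-sum into a terminating hypergeometric summable by Vandermonde; the outer $\nu$-sum then telescopes through a second application of the hockey-stick/Vandermonde identities, yielding $(\alpha-1)_{m+n+1-j}/(m+n+1-j)!$ when $0\leqslant j<m$, and, after adding the first single sum, $(\alpha)_{n+1}/(n+1)!$ when $j=m$.

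The $n=m$ formulas \eqref{n eq m} are inherited off-diagonally from the $n\geqslant m+1$ case; on the diagonal, $j=n=m$ now sits in the $0\leqslant j\leqslant m$ branch of \eqref{rho jn v3}, so its first single sum carries the contribution $(\alpha)_m/m!$ previously supplied by \eqref{Lag Rjn j eq n} while its collapsed double sum contributes $(\alpha)_{m+1}/(m+1)!$ as in \eqref{Lag Rjn j eq m}, summing to \eqref{Lag Rjn j eq n 2}. The principal technical burden of the proof is the $j=m$ case with $n\geqslant m+1$: the inner Vandermonde evaluation of the double sum must interact with the single-sum contribution in precisely the right way to leave the clean Pochhammer $(\alpha)_{n+1}/(n+1)!$, and verifying this cancellation is where I expect most of the bookkeeping effort to reside.
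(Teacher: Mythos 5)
Your route for the range $j\geqslant m+1$ is exactly the paper's: start from \eqref{rho jn v2} with $a=0$, substitute \eqref{cc DpLag} and \eqref{dpL at minus1}, kill the middle block via the vanishing of $(-\nu)_{n-j+\nu}$, and close the $j=n$ and $j\geqslant n+1$ cases with Chu--Vandermonde (the paper applies \eqref{Lag prod} after sign-flips rather than quoting the ${}_2F_1(1)$ form, but that is the same identity). Your $n=m$ bookkeeping is also essentially the paper's, modulo a glossed-over cancellation of $(-1)^m$ terms between the single sum and the double sum at $j=n=m$.

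The gap is in the range $0\leqslant j\leqslant m$. After the substitutions you describe, the inner $k$-sum of \eqref{rho jn v3} is
\[
\sum_{k=0}^{m}\frac{(-m)_k\,(\alpha+\nu+1)_k}{(\alpha+1)_k\,(\nu-j+1)_k},
\]
i.e.\ a terminating $\pFq{3}{2}{1,\,-m,\,\alpha+\nu+1}{\alpha+1,\,\nu-j+1}{1}$ with a unit numerator parameter --- the exact analogue of the ${}_4F_3$ that survives uncollapsed in the Jacobi case \eqref{Jac d}. It is \emph{not} a ${}_2F_1(1)$, and it is Saalsch\"utzian only when $j=m-1$, so Vandermonde's theorem does not evaluate it for fixed $\nu$; your claimed first summation step fails, and with it the subsequent ``telescoping'' of the $\nu$-sum. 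What actually makes the Laguerre double sum collapse is a joint manipulation: the paper reverses both indices ($\nu\to n-\nu+1$, $p\to m-p$), \emph{extends} the inner sum's range so that it becomes a complete Chu--Vandermonde sum \eqref{Lag prod} (which evaluates to $(j-m+1)_{m+n+1-j-\nu}/(m+n+1-j-\nu)!$, hence to a Kronecker delta $\delta_{j,m}$), subtracts the tail, swaps the order of summation, and applies \eqref{Lag prod} twice more. That extend-and-subtract step is the one genuinely non-routine idea in this half of the proof, and your proposal does not supply it or a substitute for it.
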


\begin{proof}
With $a=0$, Theorem \ref{thm: rho k geq m} gives
\begin{subequations}
\begin{equation}\label{proofRLag kn small v2}
\widehat{\rho}_{j,n}^{m;(\alpha)} = \hspace{-5mm} \sum_{\nu=\max(1,j-n)}^{m+1} 
\hspace{-3mm} \gamma_{n-j+\nu,0}^ {(j-\nu,j)} \dL{\nu-1}{m} ~~~~
\text{ for } j\geqslant m+1
\end{equation}
and
\begin{equation}\label{proofRLag kn small v2 2}
\begin{multlined}
\widehat{\rho}_{j,n}^{m;(\alpha)} = \sum\limits_{\nu=1}^{j} 
\gamma_{m-j+\nu,0}^{(j-\nu,j)} \dL{\nu-1}{n} \\
\hspace{1cm}+ \sum\limits_{\nu=j+1}^{n+1} \left( \dL{\nu-1}{n} 
\sum\limits_{p=0}^{m} \frac{b_{p+\nu,j}}{(p+\nu)!} \dL{p}{m}\right) ~~~~\text{ 
for } 0\leqslant j\leqslant m,
\end{multlined}
\end{equation}
\end{subequations}
where the $\gamma$-coefficients, the $b$-coefficients, and the derivatives of 
Laguerre polynomials at $x=0$ are given by \eqref{cc DpLag}, \eqref{xn to Ln}, 
and \eqref{dpL at minus1}, respectively. The main task now boils down to the 
simplification of \eqref{proofRLag kn small v2} and \eqref{proofRLag kn small v2 
2}. Our discussion branches for different ranges of $j$.

\underline{\it For $j \geqslant m+1$:}

Equation \eqref{proofRLag kn small v2} gives
\begin{equation}\label{pf Lag rho big}
\widehat{\rho}_{j,n}^{m;(\alpha)} = \sum_{\nu=\max(1,j-n)}^{m+1} 
\gamma_{n-j+\nu,0}^{(j-\nu,j)} \dL{\nu-1}{m} = -\hspace{-4mm} 
\sum_{\nu=\max(1,j-n)}^{m+1} \hspace{-2mm} 
\frac{(-\nu)_{-j+n+\nu}(\alpha+\nu)_{m-\nu+1}}{(n-j+\nu)!(m-\nu +1)!}.
\end{equation}

For $n+1\leqslant j\leqslant m+n+1$, the summation index $\nu$ in \eqref{pf 
Lag rho big} runs from $j-n$. Making a change of variable $\nu \to \nu+j-n$ 
and using the fact that $(-j+n-\nu)_{\nu} = (-1)^\nu (j-n+1)_{\nu}$ and 
$(\alpha+j-n+\nu)_{m+n+1-j-\nu}=(-1)^{m+n+1-j-\nu}(-\alpha-m)_{m+n+1-j-\nu}$ 
leads to
\[
\widehat{\rho}_{j,n}^{m;(\alpha)} = (-1)^{m+n+j}\sum_{\nu=0}^{m+n+1-j} 
\frac{(j-n+1)_{\nu} (-\alpha-m)_{m+n+1-j-\nu}}{(m+n+1-j-\nu)!\nu!},
\]
from which we obtain \eqref{Lag Rjn j geq n+1} by applying \eqref{Lag prod} and 
noting $(-\alpha-n-m+j+1)_{m+n+1-j} = (-1)^{m+n-j+1}(\alpha-1)_{m+n+1-j}$. In 
case of $m=n$, this gives \eqref{Lag Rjn j geq n+1 2}.

For $j=n$, by noting that $(-\nu)_\nu = (-1)^{\nu} \nu!$ we simplify \eqref{pf 
Lag rho big} to find
\[
\widehat{\rho}_{n,n}^{m;(\alpha)} = -\sum_{\nu=1}^{m+1} \frac{(-1)^{\nu}(\alpha 
+\nu)_{m-\nu +1}}{(m-\nu+1)!} = (-1)^m \sum_{\nu=1}^{m+1} \frac{(-\alpha-m 
)_{m-\nu+1}}{(m-\nu+1)!} = (-1)^m \sum_{\nu=0}^{m} \frac{(-\alpha-m)_{\nu 
}}{\nu!}= \frac{(\alpha)_m}{m!},
\]
where the first equality follows the sign-flip trick $(\alpha +\nu)_{m-\nu+1} = 
(-1)^{m-\nu+1} (-\alpha-m)_{m-\nu+1}$ and the second is due to a change of 
variable with $m+1-\nu$ in place of $\nu$. The last equality is obtained by 
using \eqref{Lag prod}. This proves \eqref{Lag Rjn j eq n}.

If $n\geqslant m+2$, it is possible that $m+1 \leqslant j \leqslant n-1$. In 
this case, $(-\nu)_{-j+n+\nu}=0$, which leads to \eqref{Lag Rjn m+1 leq j leq 
n-1}.

\underline{\it For $0\leqslant j \leqslant m$:}

We denote the two $\nu$-sums in \eqref{proofRLag kn small v2 2} by $S_1$ and 
$S_2$, respectively. The first sum
\[
S_1 = \sum_{\nu=1}^j \gamma_{m-j+\nu,0}^{(j-\nu,j)} \dL{\nu-1}{n} = 
-\sum_{\nu=1}^j \frac{(-\nu)_{m-j+\nu}(\alpha+\nu)_{n-\nu+1}}{(m-j+\nu)!(n-\nu 
+1)!}
\]
vanishes for any $j\leqslant m-1$, since $(-\nu)_{\tau}=0$ for $\tau \geqslant 
\nu+1$. Therefore, we only have to consider the case of $j=m$, for which
\begin{equation}\label{S1}
\begin{multlined}
S_1 = \sum_{\nu=1}^{m}\frac{(-1)^{\nu+1}(\alpha+\nu)_{n-\nu+1}}{(n+1-\nu)!}
= (-1)^n \hspace{-4mm} \sum_{\nu=n-m+1}^{n} \hspace{-4mm}
\frac{(-\alpha-n)_{\nu}}{\nu!}\\[2mm]
\hspace{3cm}=(-1)^n\left(\sum_{\nu=0}^{n}-\sum_{\nu=0}^{n-m}\right)\frac{
(-\alpha-n)_ { \nu } } { \nu!} = 
\frac{(\alpha)_n}{n!}-\frac{(-1)^{m}(\alpha+m)_{n-m}}{(n-m)!},
\end{multlined}
\end{equation}
where the first equality follows from a change of variable and the last is 
obtained by applying \eqref{Lag prod} to each of the two sums.

The second sum in \eqref{proofRLag kn small v2 2} reads
\begin{align*}
S_2 &= \sum\limits_{\nu=j+1}^{n+1} \left(\dL{\nu-1}{n} \sum\limits_{p=0}^{m} 
\frac{b_{p+\nu,j}}{(p+\nu)!} \dL{p}{m}\right)\\
&=\sum\limits_{\nu=j+1}^{n+1} \left(\frac{(-1)^{\nu-1} (\alpha+\nu)_{n-\nu 
+1}}{(n-\nu+1)!} \sum\limits_{p=0}^{m} \frac{(-1)^p (-p-\nu)_j 
\Gamma(p+\alpha+\nu+1) (p+\alpha+1)_{m-p}}{(p+\nu)! 
\Gamma(j+\alpha+1)(m-p)!}\right)\\
&=(-1)^{m+n+j} \sum\limits_{\nu=j+1}^{n+1} \left( 
\frac{(-\alpha-n)_{n-\nu+1}}{(n-\nu+1)!} \sum\limits_{p=0}^{m} 
\frac{(\alpha+j+1)_{p+\nu-j} (-\alpha-m)_{m-p}}{(p+\nu-j)!(m-p)!} \right),
\end{align*}
where, to obtain the last equality, we have used the identities $(\alpha 
+\nu)_{n-\nu +1}=(-1)^{n-\nu+1} (-\alpha-n)_{n-\nu +1}$ and $(p+\alpha+1)_{m-p}= 
(-1)^{m-p} (-\alpha-m)_{m-p}$. Making the changes of variables $\nu\to n-\nu+1$ 
and $p\to m-p$, we have
\begin{align*}
S_2 &= (-1)^{m+n+j} \sum\limits_{\nu=0}^{n-j}\left(\frac{(-\alpha-n)_{\nu 
}}{\nu!} \sum\limits_{p=0}^{m}\frac{(j+\alpha+1)_{m+n+1-j-\nu-p}(-\alpha-m)_{p}}
{(m+n+1-j-\nu-p)!p!} \right)\\
&= (-1)^{m+n+j} \sum\limits_{\nu=0}^{n-j} \Bigg[\frac{(-\alpha-n)_{\nu}}{\nu!} 
\left(\sum\limits_{p=0}^{m+n+1-j-\nu}-\sum\limits_{p=m+1}^{m+n+1-j-\nu}
\right) \frac{(j+\alpha+1)_{m+n+1-j-\nu-p}(-\alpha-m)_{p}}{(m+n+1-j-\nu-p)!p!}
\Bigg].
\end{align*}
By virtue of \eqref{Lag prod}, the first $p$-summation in the last equation 
simplifies to $\ds \frac{(j-m+1)_{m+n+1-j-\nu}}{(m+n+1-j-\nu)!}$, which vanishes 
for $0\leqslant j\leqslant m-1$ and equals $1$ for $j=m$. Now we change the 
variable $p\to p+m+1$ in the second $p$-summation to obtain
\begin{align*}
S_2 &= (-1)^{m+n+j} \sum\limits_{\nu=0}^{n-j} \Bigg[ 
\frac{(-\alpha-n)_{\nu}}{\nu!} \left(\delta_{j,m} - 
\sum\limits_{p=0}^{n-j-\nu}\frac{(j+\alpha +1)_{n-j-p-\nu}(-\alpha-m)_{m+1+p}}
{(n-\nu-j-p)!(m+1+p)!} \right) \Bigg] \\
&= (-1)^{m+n+j} \Bigg[\sum\limits_{\nu=0}^{n-j} \frac{(-\alpha-n)_{\nu}}{\nu!} 
\delta_{j,m}-\sum\limits_{p=0}^{n-j} \left(\frac{(-\alpha-m)_{m+1+p}}{(m+1+p)!}
\sum\limits_{\nu=0}^{n-j-p} \frac{(j+\alpha +1)_{n-j-p-\nu} 
(-\alpha-n)_{\nu}}{(n-\nu-j-p)!\nu!}\right) \Bigg],
\end{align*}
where we have swapped the order of $\nu$- and $p$-sums. Now, both the $\nu$-sums 
can be simplified using \eqref{Lag prod}:
\begin{align}
S_2 &= (-1)^{m+n+j} \left[\frac{(-\alpha-n+1)_{n-j}}{(n-j)!}\delta_{j,m}
-\sum\limits_{p=0}^{n-j} \frac{(j-n+1)_{n-j-p}(-\alpha-m)_{m+1+p}} 
{(n-j-p)!(m+1+p)!} \right] \nonumber \\
&= (-1)^{m+n+j} \left[(-1)^{n-j}\frac{(\alpha+j)_{n-j}}{(n-j)!}\delta_{j,m}
- \sum\limits_{p=0}^{n-j} \frac{(j-n+1)_{p}(-\alpha-m)_{m+1+n-j-p}} 
{p!(m+1+n-j-p)!} \right], \label{S2 2}
\end{align}
where we have used the sign-flip trick for the first term in the brackets and 
changed variable $p\to n-j-p$ in the $p$-sum. 

When $j=m=n$, only the zeroth summand is left for the $p$-sum in \eqref{S2 2}:
\begin{equation*}\label{S2 n n n}
S_2 = (-1)^{m} \left[1 - \frac{(-\alpha-m)_{m+1}}{(m+1)!} \right]
= (-1)^{m} + \frac{(\alpha)_{m+1}}{(m+1)!},
\end{equation*}
which, combined with \eqref{S1} for $m=n$, gives \eqref{Lag Rjn j eq n 2}. For 
all other cases, the upper limit of the $p$-sum in \eqref{S2 2} can be bumped 
to $m+n+1-j$ as $(j-n+1)_{p}$ vanishes for $p\geqslant n-j+1$. Simplifying the 
sum by \eqref{Lag prod}, followed by using the sign-flip trick again, we 
finally obtain
\begin{equation*}\label{S2}
S_2 = \frac{(-1)^m (\alpha+m)_{n-m}}{(n-m)!} \delta_{j,m} +
\frac{(\alpha-1)_{m+n+1-j}}{(m+n+1-j)!},
\end{equation*}
which, together with \eqref{S1}, yields \eqref{Lag Rjn j eq m}, \eqref{Lag Rjn 
j leq m}, and \eqref{Lag Rjn j leq m 2}.
\end{proof}

\begin{figure}[b]
\centering
\begin{subfigure}[b]{0.32\textwidth}
\includegraphics[scale=0.380]{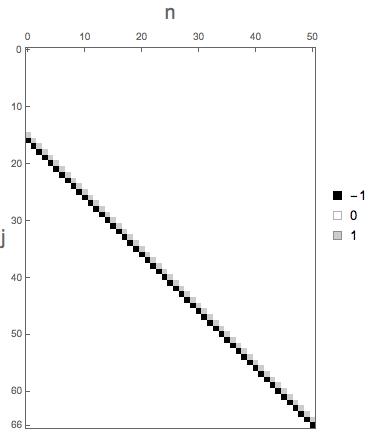}\caption{$\alpha = 0$} 
\label{FIG:LagConv_alpha0}
\end{subfigure}
\begin{subfigure}[b]{0.32\textwidth}
\includegraphics[scale= 0.380]{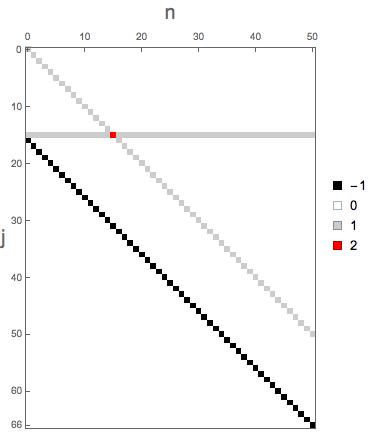}\caption{$\alpha = 1$} 
\label{FIG:LagConv_alpha1}
\end{subfigure}
\begin{subfigure}[b]{0.32\textwidth}
\includegraphics[scale= 0.38]{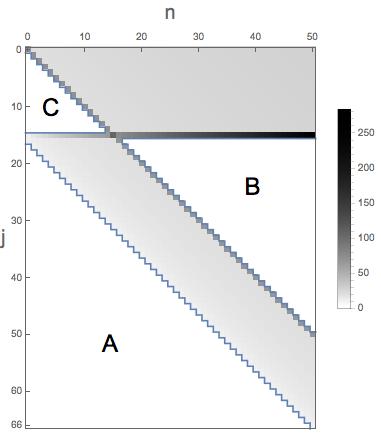}\caption{$\alpha = 
2.5$}\label{FIG:LagConv_alpha2dot5}
\end{subfigure}
\caption{The magnitude plot of the Laguerre coefficients 
$\widehat{\rho}_{j,n}^{15;(\alpha)}$ for $0 \leqslant n \leqslant 
50$.}\label{FIG:lag}
\end{figure}

\begin{remark}\label{rem:lag_alpha0}
When $\alpha=0$, $\widehat{\rho}_{j,n}^{m;(0)}$ given by \eqref{n geq m+1} 
becomes zero for any $0 \leqslant j \leqslant m+n-1$, while 
$\widehat{\rho}_{m+n+1,n}^{m;(0)} = -1$ and $\widehat{\rho}_{m+n,n}^{m;(0)} 
= 1$, which recovers \eqref{conv Lag 0}. Same for $\widehat{\rho}_{j,m}^{m;(0)}$ 
given by \eqref{n eq m}.
\end{remark}

\begin{remark}\label{rem:lag_alpha1}
When $\alpha=1$, $\widehat{\rho}$-coefficients also enjoy sparsity, suggested by 
Theorem \ref{thm Lag any j v2}:
\[
\int_{0}^{x} L_m^{(1)}(x-t) L_n^{(1)}(t) \mathrm{d}t = -L_{m+n+1}^{(1)}(x)
+ L_{n}^{(1)}(x) + L_{m}^{(1)}(x).
\]
\end{remark}

Figure \ref{FIG:lag} shows the magnitudes of the coefficients 
$\widehat{\rho}_{j,n}^{m;(\alpha)}$ with $m = 15$ for (a) $\alpha = 0$, (b) 
$\alpha = 1$, and (c) $\alpha = 2.5$. The $n$-th column in this matrix plot 
corresponds to $\rho_{j,n}^{15;(\alpha)}$. Panes (a) and (b) confirm Remarks 
\ref{rem:lag_alpha0} and \ref{rem:lag_alpha1}, respectively. With $\alpha = 
2.5$, the plot in pane (c) is representative for a general case where three 
regions of exact zeros are indicated by solid lines. The exact zeros in Region 
A is again due to the fact that the convolution of $L_{15}^{(2.5)}$ and 
$L_n^{(2.5)}$ is a polynomial of degree $n+16$, while the zeros in Region B  
corresponds to \eqref{Lag Rjn m+1 leq j leq n-1}. The zeros in region C are also 
due to \eqref{Lag Rjn m+1 leq j leq n-1} but with the roles of n and m 
exchanged.



\section{Closing remarks}
In this paper, we have derived the explicit formulas for the coefficients in 
the series representation for the convolution of the elements in a polynomial 
sequence. Particularly, the results are significantly simplified when the 
polynomial sequence is formed by classical orthogonal polynomials of Jacobi- or 
Laguerre families. 

As mentioned in Section 1, a most common scenario seen in practice is that the 
functions to be convolved are approximated by polynomial series
\begin{equation}\label{fM}
f(x) \approx f_M(x) = \sum_{m=0}^M a_m P_m(x) \quad \text{ and }\quad g(x) 
\approx g_N(x) = \sum_{n=0}^N b_n P_n(x),
\end{equation}
where the coefficients $a_m$ and $b_n$ are known or computationally obtainable. 
The convolution of $f_M(x)$ and $g_N(x)$ can be represented by a third series 
\begin{equation*}
h_{M+N+1}(x) = \sum_{k=0}^{M+N+1} c_k P_k(x)
\end{equation*}
with the coefficient vector $\underline{c}$ being the product of the convolution 
matrix $R$ and the coefficient vector $\underline{b}$, given by \eqref{cRb}. In 
fact, \eqref{gen conv} and \eqref{fM} gives
\begin{equation*}\label{int Tn f expanded}
\int_{-a}^{x+a} f_M(x-t) P_n(t) \dt = \sum_{m=0}^{M} a_m \sum_{j=0}^{m+n+1} 
\rho_{j,n}^{m} P_j(x+a) \quad \text{ for } 0 \leqslant n \leqslant N,
\end{equation*}
which, with the order of the sums swapped, becomes 
\begin{equation}\label{conv fPn 1}
\int_{-a}^{x+a} f_M(x-t)P_n(t) \dt = \sum_{j=0}^{n}\sum_{m=0}^M a_m 
\rho_{j,n}^{m} P_j(x+a) + \sum_{j=n+1}^{M+n+1}\sum_{m=j-(n+1)}^M a_m 
\rho_{j,n}^{m} P_j(x+a).
\end{equation}
Noting that
\begin{equation}\label{conv fPn 2}
\int_{-a}^{x+a} f_M(x-t)P_n(t) \dt = \sum_{j=0}^{M+n+1} R_{j,n} P_j(x+a),   
\end{equation}
where $R_{j,n}$ is the $(j,n)$-entry of $R$. Matching \eqref{conv fPn 1} and 
\eqref{conv fPn 2}, we have 
\begin{equation}\label{conv fPn 3}
R_{j,n} = \sum_{m=\max(0,n+1-j)}^M a_m \rho_{j,n}^{m}. 
\end{equation}

As seen from the magnitude plots in Section 3, many of the $\rho$-coefficients, 
though they are non-zero in an exact sense, can be deemed as zeros in floating 
point arithmetic due to their tiny magnitudes. An exciting extension of the 
results in this paper is the investigation of the asymptotic behavior of the 
$\rho$-coefficients for large $j$ and $n$ using the newly derived explicit 
formulas. The asymptotics will help us identify via \eqref{conv fPn 3} the 
entries of $R$ that can be safely zeroed in numerical computation and, 
therefore, enable a faster construction of the convolution matrix $R$.

On the other hand, the explicit formulas for the convolution coefficients may 
also reveal the numerical rank of the convolution matrix $R$. The low rank 
property of $R$ or its subparts, if any, could lead to potential speed-up in 
either construction of $R$ or fast algorithms for convolution. We save these 
possibilities for a future work.

\thanks{\textbf{Acknowledgment} We would like to thank Erik Koelink for sharing 
his insights on the convolution product of two Jacobi polynomials and Alfredo 
Dea\~no for enlightening discussions. }

\bibliographystyle{amsplain}
\bibliography{ConvClassical}

\providecommand{\bysame}{\leavevmode\hbox to3em{\hrulefill}\thinspace}
\providecommand{\MR}{\relax\ifhmode\unskip\space\fi MR }
\providecommand{\MRhref}[2]{%
  \href{http://www.ams.org/mathscinet-getitem?mr=#1}{#2}
}
\providecommand{\href}[2]{#2}
\begin{thebibliography}{10}

\bibitem{AAR}
George~E Andrews, Richard Askey, and Ranjan Roy, \emph{Special functions},
  Cambridge University Press, Cambridge, 1999.

\bibitem{AsFi}
Richard Askey and James Fitch, \emph{Integral representations for jacobi
  polynomials and some applications}, J. Math. Anal. Appl. \textbf{26} (1969),
  no.~2, 411--437.

\bibitem{GR}
Izrail~Solomonovich Gradshteyn and Iosif~Moiseevich Ryzhik, \emph{Table of
  integrals, series, and products}, Academic press, 2014.

\bibitem{Ismail}
Mourad E~H Ismail, \emph{Classical and quantum orthogonal polynomials in one
  variable}, Encyclopedia of Mathematics and its Applications, vol.~98,
  Cambridge University Press, Cambridge, 2009.

\bibitem{lin}
Peter Linz, \emph{Analytical and numerical methods for volterra equations},
  SIAM, 1985.

\bibitem{Mar}
Pascal Maroni, \emph{Semi-classical character and finite-type relations between
  polynomial sequences}, Appl. Numer. Math. \textbf{31} (1999), no.~3,
  295--330.

\bibitem{RochaMar}
Pascal Maroni and Z{\'e}lia da~Rocha, \emph{Connection coefficients between
  orthogonal polynomials and the canonical sequence: an approach based on
  symbolic computation}, Numer. Algorithms \textbf{47} (2008), no.~3, 291--314.

\bibitem{DLMF}
Frank W~J Olver, Daniel~W Lozier, Ronald~F Boisvert, and Charles~W Clark,
  \emph{Nist handbook of mathematical functions}, Cambridge Univeristy Press,
  2010.

\bibitem{Rainville}
Earl~D Rainville, \emph{Special functions}, Chelsea Publishing Co., Bronx,
  N.Y., 1971.

\bibitem{SRM}
Jorge S{\'a}nchez-Ruiz, Pedro~L\'{o}pez Art{\'e}s, Andrei
  Mart{\'\i}nez-Finkelshtein, and Jes\'{u}s Dehesa, \emph{General linearization
  formulae for products of continuous hypergeometric-type polynomials}, J.
  Phys. A-Math. Gen. \textbf{32} (1999), no.~42, 7345.

\bibitem{sze}
G\'{a}bor Szeg\H{o}, \emph{Orthogonal polynomials}, American Mathematical
  Society, 1939.

\bibitem{XL}
Kuan Xu and Ana~F Loureiro, \emph{Spectral approximation of convolution
  operator}, arXiv: 1804.08762.

\end{thebibliography}
\end{document}